\documentclass[11pt]{article}
\usepackage{t1enc}
\usepackage{epsf}
\usepackage{amsthm}
\usepackage{amsmath,amssymb}
\usepackage{graphicx}
\usepackage{xcolor}
\newcommand{\N}{\mbox{I\kern-2ptN}}
\newcommand{\R}{\mbox{I\kern-2ptR}}
\newcommand{\C}{\mbox{I\kern-6ptC}}

\newtheorem{teo}{Theorem}

\newtheorem{co}[teo]{Corollary}

\theoremstyle{definition}
\newtheorem{nota}[teo]{Remark}
\newtheorem{ejem}[teo]{Example}

\usepackage{amsfonts}

\usepackage[ansinew]{inputenc}

\begin{document}
\title{
The NIEP: Irreducible and Positive Realizations
\thanks{%
Partially supported by grant PID2022-138906NB-C21 funded by MCIU/AEI/10.13039/501100011033 and by ERDF "A way of making Europe", and the Spanish Research Council (Comisi\'on Interministerial de Ciencia y Tecnolog\'ia) under project PID2021-122501NB-I00.} \\
}
\author{ 
C. R. Johnson, C. Mariju\'an, M. Pisonero}  
\author{ 
C. R. Johnson $^{\rm a}$, C. Mariju\'an $^{\rm b}$, M. Pisonero $^{\rm b,}$
 \thanks{Corresponding author.\newline 
{\it E-mail addresses:} crjohn@wm.edu (C. R. Johnson), cmarijuan@uva.es 
(C. Mariju\'an), mpisonero@uva.es (M. Pisonero).}}    
 
\date
{\small {\it 
$^{\rm a}$Dept. Mathematics, College of William and Mary, Williamsburg, Virginia, 23187 USA\\
$^{\rm b}$Dpto. Matem\'atica Aplicada, Universidad de Valladolid/IMUVa, Spain
}}
\maketitle

\begin{abstract} Our focus is upon {\it irreducible} nonnegative $n$-by-$n$ matrix realizations of nonnegatively realizable spectra or, equivalently, characteristic polynomials. After giving some general background, we make some useful new observations and show the existence of irreducible nonnegative realizations in some general cases. Then, we focus on $n<5$, where the NIEP is solved. Finally, we focus on the trace 0 case and, using graph theoretic methods, characterize nonnegative irreducible realizability among realizable polynomials. The closely related problem of positive realizations, for trace positive spectra, is also discussed.

\noindent {\it AMS Classification:} 15A29, 15A18, 05C20, 05C50, 15B48. \\
\noindent {\it Keywords:}  Nonnegative inverse eigenvalue problem, Irreducible matrices, Positive matrices, Strongly connected weighted digraphs.
\end{abstract}

\

\section{Introduction}\label{S1}

By an $n$-spectrum $\sigma$, we mean a list of $n$ complex numbers, including possible allowed repeats. If implicit, the $n$ is omitted. We will think of these as sets (though repeats are allowed) and abuse notation and terminology in natural ways, {\it e.g.}, $\sigma ^2$ is the $n$-list of element-wise squares from $\sigma$.

In the Nonnegative Inverse Eigenvalue Problem (NIEP), $\sigma$ is said to be {\it realizable}, $\mathcal{R}$, if there is an $n$-by-$n$ entry-wise nonnegative matrix $A$, whose list of eigenvalues (with possible repeats) is $\sigma$. There are well-known necessary conditions for realizability of $\sigma$:

$\mbox{Tr} (\sigma)$, the sum of all elements of $\sigma$, must be nonnegative (trace condition)

$\sigma$ must be self-conjugate (reality)

\noindent and

the largest absolute value of an element of $\sigma$, $\rho(\sigma)$, the so-called Perron root, must occur in $\sigma$ (Perron condition).

There may be ties or repeats that may require additional structure in realizations. However, these conditions are far from sufficient.

Another approach to  the NIEP is to view a nonnegative matrix as the adjacency matrix of a weighted digraph and focus the attention on the coefficients of its characteristic polynomial $P(x)=x^n+k_1x^{n-1}+ \cdots+k_n$, see \cite{TAAMP}. The polynomial $P(x)$ is said to be {\it realizable}, $\mathcal{R}$, if there is a weighted digraph (equivalently an entry-wise nonnegative matrix) whose characteristic polynomial is $P(x)$.

The general NIEP is one of the most profound and difficult problems in all of mathematics. See \cite{HJ1} for general background and \cite{JMPP} for a recent survey, as well as other  references \cite{LoLo,LM2,Sp} for further background. However, the difficulty, means that parts of, or variations upon the full NIEP,  can be very interesting or worthwhile, and there are many \cite{Pe1,PeMi,Fi,Mi1}.

Our interest here lay in identifying, among realizable spectra/polynomials, those that are realizable by an irreducible matrix/a strongly connected weighted digraph. Recall that an $n$-by-$n$ matrix is {\it irreducible} if it is not similar by a permutation matrix to a nontrivially block triangular matrix. A digraph is strongly connected if every two vertices are joined by a path.  As $\mathcal{R}$ identifies the realizable spectra/polynomials, we use $\mathcal{IR}$ to indicate the irreducibly realizable ones. Since the $\mathcal{IR}$ spectra/polynomials are closely related to the spectra/polynomials realizable by an entry-wise positive matrix, we denote the latter by $\mathcal{R}_{>0}$. Of course $\mathcal{R}_{>0} \subseteq \mathcal{IR} \subseteq \mathcal{R}$, and each containment is strict.

It is also well known \cite{F}, that if $\sigma$ is the spectrum of an irreducible nonnegative matrix $A$, that $\rho(\sigma)$ has multiplicity 1 in $\sigma$ and that $\rho(\sigma)$ enjoys an entry-wise positive left and right eigenvector associated with $A$. There could still be ties for the spectral radius $\rho$ among other eigenvalues in $\sigma$. The occurrence of such ties is significant, however. When none occur, we call $\sigma$ {\it spectrally simple} (SS, for short), { \it i.e.},  $\rho(\sigma)$ is the only eiegenvalue on the spectral circle. It will be convenient to use some special notation: for $t$ real we denote the spectrum, in which $\sigma$ is modified only by adding $t$ to $\rho(\sigma)$, by $\sigma_{t}$; $t$ may be a positive or negative real number.

When $\sigma$ is not SS (but enjoys the trace, reality and Perron conditions), we say that it has {\it Frobenius structure} if there are $h>1$ eigenvalues (in $\sigma$) on the circle of radius $\rho(\sigma)$, and these are evenly spaced about that circle. That means that they are multiples of the $h$-th roots of unity by $\rho(\sigma)$. Then any remaining nonzero eigenvalues must be inside this first circle. If we choose one of the next largest absolute values say $\lambda$, with $|\lambda|=a$, there must exist $h$ of them with absolute value $a$ equally spaced around the circle of radius $a$. If none of these is $a$, their conjugates must also appear, by reality. Then continue looking inside the second circle for the next largest absolute value, which may or may not also be $a$ and continue as before. On each interior, nontrivial circle there must be a self-conjugate set of eigenvalues of cardinality a multiple of $h$. Any zero eigenvalues are ``wild cards'', and there could be any number of them. The characteristic polynomial of $\sigma$ with $\rho(\sigma)=\rho$ is of the form
$$
x^p\left(x^h-\rho^h\right)\left(x^h-(z_2\rho)^h\right) \cdots \left(x^h-(z_m\rho)^h\right)
$$
with $z_j\in \C^*$ and $|z_j|<1$ for $j=2,\dots,m$. When $\sigma$ is the spectrum of a nonnegative matrix, and $\sigma$ is not SS, then wether $\sigma$ has Frobenius structure is very important. According to \cite{F,G,HJ1} we have

\begin{teo}\label{Teo1}
If $\sigma$ $(P(x))$ is $\mathcal{R}$  and not SS, then if  $\sigma$ $(P(x))$ is $\mathcal{IR}$ it has Frobenius structure.
\end{teo}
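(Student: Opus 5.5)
The plan is to derive the statement from the Perron--Frobenius theory of irreducible nonnegative matrices. Let $A$ be an irreducible nonnegative $n$-by-$n$ matrix with spectrum $\sigma$ (equivalently, characteristic polynomial $P(x)$). Let $\rho=\rho(A)>0$; when $n\ge 2$, irreducibility forces $A\neq 0$, so $\rho>0$. Let $h$ be the number of eigenvalues of $A$ on the circle $|z|=\rho$, counted with multiplicity. Since $\sigma$ is not SS, $h>1$. From Perron--Frobenius we know $\rho$ is a simple eigenvalue with a positive eigenvector.

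The first step is the classical result, via Wielandt's lemma, that the peripheral eigenvalues are exactly $\rho\omega^k$, $k=0,\dots,h-1$, with $\omega=e^{2\pi i/h}$, each simple. Let $\lambda=\rho e^{i\theta}$ be any peripheral eigenvalue with eigenvector $y$, and let $x>0$ be the Perron vector of $A^T$. From $|y|\le \rho^{-1}A|y|$ and $x^T(A|y|-\rho|y|)=0$ we get $A|y|=\rho|y|$, so $|y|>0$. Equality in the triangle inequality then yields a diagonal unitary $D$ with
\[
A=e^{i\theta}DAD^{-1}.
\]

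The second step is the rotational invariance this gives. The identity $A=e^{i\theta}DAD^{-1}$ shows that the whole spectrum, with multiplicities, is invariant under multiplication by $e^{i\theta}$. Hence the set of $\theta$ arising from peripheral eigenvalues forms a finite subgroup of the circle, so the peripheral eigenvalues are $\rho$ times the $h$-th roots of unity, each simple because $\rho$ is simple. Invariance of $\sigma$ under multiplication by $\omega$ then means $\omega$ acts freely on the nonzero elements of $\sigma$. Every circle $|z|=a$ with $0<a<\rho$ therefore carries a multiset of eigenvalues that is a union of $\omega$-orbits of size $h$, and it is self-conjugate because $A$ is real. Equivalently, $P(\omega x)=\omega^n P(x)$, which forces $P(x)=x^p Q(x^h)$. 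Factoring $Q$ over $\C$ gives exactly the displayed form, with the root $\rho^h$ simple.

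The main obstacle is the equality case of the triangle inequality, which produces $D$. From $|Ay|=A|y|$ componentwise, each row sum $\sum_j a_{ij}y_j$ must have all nonzero terms aligned. Irreducibility, through strong connectivity of the digraph of $A$, ensures every coordinate is reached, so the phases $y_j=|y_j|e^{i\phi_j}$ define $D=\mathrm{diag}(e^{i\phi_j})$ with $e^{-i\theta}D^{-1}AD=A$. If needed, I would simply cite this as the standard Frobenius theorem from \cite{F,G,HJ1}, as the paper does. The reduction from $\mathcal{IR}$ for $P(x)$ to $\mathcal{IR}$ for $\sigma$ is immediate, since a strongly connected weighted digraph is the same thing as an irreducible nonnegative matrix.
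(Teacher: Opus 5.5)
Your proposal is correct and is essentially the paper's route. The paper gives no argument of its own for this statement; it invokes the classical Frobenius theorem on the peripheral spectrum of an irreducible nonnegative matrix from \cite{F,G,HJ1}, and what you sketch is the standard Wielandt-style proof of exactly that theorem: the left Perron vector forces $A|y|=\rho|y|$, you extract a diagonal unitary $D$ with $D^{-1}AD=e^{i\theta}A$, and you deduce $P(\omega x)=\omega^nP(x)$, hence $P(x)=x^pQ(x^h)$ with $\rho^h$ a simple root of $Q$; this covers every ingredient of the paper's definition of Frobenius structure, including the self-conjugate multisets of size a multiple of $h$ on each interior circle.
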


\begin{nota}
We suspect a converse, that does not seem to be in the literature: ``If $\sigma$ is  $\mathcal{R}$ and not SS, then $\sigma$ is $\mathcal{IR}$ if and only if $\sigma$ has Frobenius structure''.
\end{nota}

 Note that $\sigma$ may be not SS and not enjoy Frobenius structure and still be realizable. It may be that a subset of the eigenvalues may be realizable and not fit the Frobenius structure of the others.
 
 \begin{ejem}\label{ejem3}
The spectrum $\sigma=\{2,1,-2\}$, or equivalently the polynomial $P(x)=x^3-x^2-4x+4=(x^2-4)(x-1)$, is not SS, does not have Frobenius structure so is not $\mathcal{IR}$ but is $\mathcal{R}$.
 \end{ejem}

 \begin{ejem}
An example that nontrivially fits the Frobenius structure is
$$
\sigma=\left\{1,-1,\frac{1}{2} \pm\frac{i}{2},-\frac{1}{2} \pm\frac{i}{2}\right\} \quad \mbox{or} \quad P(x)=x^6-x^4+\frac{1}{4}x^2-\frac{1}{4}=(x^2-1)\left(x^2+\frac{i}{2}\right)\left(x^2-\frac{i}{2}\right).
$$
They are $\mathcal{IR}$ and have the realizations 
$$
\left[\begin{array}{cccccc}
0&1&0&0&0&0\\
\frac{1}{2}&0&1&0&0&0\\
0&0&0&1&0&0\\
0&0&0&0&1&0\\
0&0&0&0&0&1\\
\frac{1}{4}&0&0&0&\frac{1}{2}&0\\
\end{array}
\right]
\quad \mbox{or}\quad 
\left[\begin{array}{cc}
0 & B\\
I&0
\end{array}
\right]
$$
where 
$$
B=\frac{1}{3}\left[\begin{array}{ccc}
1& 1+\frac{\sqrt{3}}{2} & 1-\frac{\sqrt{3}}{2}\\
1-\frac{\sqrt{3}}{2}&1 & 1+\frac{\sqrt{3}}{2}\\
1+\frac{\sqrt{3}}{2} &1-\frac{\sqrt{3}}{2} &1
\end{array}
\right]
$$
has spectrum $\left\{1, \pm \frac{i}{2}\right\}$. Note that Theorem \ref{t39} can not be applied to $P(x)$, but is $\mathcal{IR}$ and so $\mathcal{R}$.
 \end{ejem}

After considerable thought and work, we have come to a general conjecture for the $\mathcal{IR}$ problem. In some ways this seems natural, almost a converse to the classical structure, but far from trivial.

\

\noindent {\bf Conjecture:}  If $\sigma$ $(P(x))$ is $\mathcal{R}$ and SS, then $\sigma$ $(P(x))$ is $\mathcal{IR}$.

\

We prove this conjecture for $n$-spectra with $n<4$ and give considerable evidence for $n=4$. Since, for $n\geq 5$, the $\mathcal{R}$ spectra are not all known (except for many special cases), it is difficult to address the $\mathcal{IR}$ problem for $n\geq 5$. However, we also give several general cases and some interesting examples.

This paper is organized as follows. In the next section (\ref{S2}) we give some necessary background that supports our work. We then proceed from some general results to particular dimensions. In Section \ref{S3} we discuss which realizable spectra  are necessarily $\mathcal{IR}$. Though simple, the main observation is quite powerful. In Section \ref{S4}, we discuss the question of when adding 0's to $\sigma$ can preserve $\mathcal{IR}$. We then discuss (Section \ref{S5}) some real spectra that are $\mathcal{IR}$ in any dimension followed by, the close but not exact, relation between $\mathcal{IR}$ and its special case $\mathcal{R}_{>0}$. In Section \ref{S7} we consider negation invariant spectra, which are closely allied with Frobenius structure. Then, in sections \ref{S8}, \ref{S9} and \ref{S10}, we analyze the cases $n=2,3,4$. Already in case $n=3$, not all $\mathcal{R}$ spectra are $\mathcal{IR}$. In Section \ref{S11}, we discuss $\mathcal{IR}$ spectra (polynomials) with trace 0.

\section{Background and Observations}\label{S2}

A handy tool for adjusting an eigenvalue of a matrix, without putting it in any canonical form, was observed by Brauer \cite{Brauer}. It is more general than we mention, but we give the case that we use and that is most natural for nonnegative matrix analysis. Its use is often based on the orthogonality of right and left eigenvectors associated with different eigenvalues, the ``principle of biorthogonality'' \cite{HJ1}.

\begin{teo}\label{Brauer}
If $A$ is a nonnegative matrix with spectrum $\sigma$, Perron root $\rho$ and associated (nonnegative) normalized right eigenvector $\mathbf{x}$, and $\mathbf{y}$ is an arbitrary vector, then $A+\epsilon \,\mathbf{xy}^T$ has spectrum $\{\rho +\epsilon \,\mathbf{y}^T\mathbf{x}\}\cup (\sigma-\{\rho\})$. If $\mathbf{x}$ is positive and $\mathbf{y}$ and $\epsilon$ are chosen positive, then  $A+\epsilon\, \mathbf{xy}^T$ is positive with Perron root $\rho +\epsilon \,\mathbf{y}^T\mathbf{x}$.
\end{teo}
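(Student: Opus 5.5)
The plan is to use a similarity that puts the Perron eigenvector first, and then read off the spectrum of the rank-one perturbation from the block-triangular form. Let $A\mathbf{x}=\rho\mathbf{x}$ with $\mathbf{x}\neq 0$, and put $B=A+\epsilon\,\mathbf{x}\mathbf{y}^T$. The first thing I will check is that $\mathbf{x}$ is an eigenvector of $B$:
$$B\mathbf{x}=\rho\mathbf{x}+\epsilon(\mathbf{y}^T\mathbf{x})\mathbf{x}=(\rho+\epsilon\,\mathbf{y}^T\mathbf{x})\mathbf{x}.$$
Next I complete $\mathbf{x}$ to a basis: pick a nonsingular $S$ whose first column is $\mathbf{x}$. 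Then
$$S^{-1}AS=\left[\begin{array}{cc}\rho & \mathbf{u}^T\\ 0 & A_1\end{array}\right],$$
where $A_1$ has spectrum $\sigma-\{\rho\}$, taken as a multiset so that only one copy of $\rho$ is removed.

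The rank-one term becomes $S^{-1}\mathbf{x}\mathbf{y}^TS=\mathbf{e}_1(\mathbf{y}^TS)$. This matrix is nonzero only in its first row, and its $(1,1)$ entry is $\mathbf{y}^T\mathbf{x}$. Hence
$$S^{-1}BS=\left[\begin{array}{cc}\rho+\epsilon\,\mathbf{y}^T\mathbf{x} & \ast\\ 0 & A_1\end{array}\right].$$
This is block upper triangular, so the spectrum of $B$ is $\{\rho+\epsilon\,\mathbf{y}^T\mathbf{x}\}\cup(\sigma-\{\rho\})$. The argument does not use nonnegativity, and it works for any $\mathbf{y}$ and any $\epsilon$, real or complex.

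For the second assertion, suppose $\mathbf{x}>0$, $\mathbf{y}>0$ and $\epsilon>0$. Then $\epsilon\,\mathbf{x}\mathbf{y}^T$ is entrywise positive, and adding it to the nonnegative matrix $A$ gives a positive matrix $B$. The vector $\mathbf{x}$ is a positive eigenvector of $B$ with eigenvalue $\rho+\epsilon\,\mathbf{y}^T\mathbf{x}$. By Perron--Frobenius theory, the only eigenvalue of a nonnegative irreducible matrix that has a positive eigenvector is its spectral radius. Therefore $\rho(B)=\rho+\epsilon\,\mathbf{y}^T\mathbf{x}$.

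An alternative way to see the same fact is to note that $\mathbf{y}^T\mathbf{x}>0$, so the new eigenvalue exceeds $\rho$. Every other eigenvalue of $B$ lies in $\sigma-\{\rho\}$ and so has modulus at most $\rho$.

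The only delicate point is the multiset bookkeeping: exactly one copy of $\rho$ must be replaced, even when $\rho$ is repeated. The triangularization handles this automatically. There is no serious obstacle.
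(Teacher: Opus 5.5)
Your proof is correct and complete. The paper gives no proof of its own: it quotes the theorem from Brauer and only remarks that its use rests on the principle of biorthogonality, so your route differs from the one the paper alludes to.

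\textbf{The biorthogonality route.} If $\mathbf{z}^TA=\lambda\mathbf{z}^T$ with $\lambda\neq\rho$, then $\mathbf{z}^T\mathbf{x}=0$. Hence $\mathbf{z}^T(A+\epsilon\,\mathbf{x}\mathbf{y}^T)=\lambda\mathbf{z}^T$, while $\mathbf{x}$ carries the shifted eigenvalue $\rho+\epsilon\,\mathbf{y}^T\mathbf{x}$. This shows at a glance why the other eigenvalues are untouched. By itself, though, it only shows that each eigenvalue different from $\rho$ survives. Getting algebraic multiplicities right takes further work, for instance generalized left eigenspaces or a perturbation/continuity argument. This is needed in particular to see that the remaining copies of $\rho$ persist when $\rho$ is repeated, and when $\rho+\epsilon\,\mathbf{y}^T\mathbf{x}$ happens to coincide with another eigenvalue.

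\textbf{Your deflation route.} Using a nonsingular $S$ whose first column is $\mathbf{x}$, you obtain
$\det(tI-A-\epsilon\,\mathbf{x}\mathbf{y}^T)=(t-\rho-\epsilon\,\mathbf{y}^T\mathbf{x})\det(tI-A_1)$ directly. So the whole multiset, with exactly one copy of $\rho$ replaced, comes out in one step. It also holds for arbitrary complex $\mathbf{y}$ and $\epsilon$.

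\textbf{The positivity part.} Both of your arguments are valid. The second is the more elementary: it needs only that $\rho$ is the spectral radius of $A$ and that $\epsilon\,\mathbf{y}^T\mathbf{x}>0$, with no appeal to the positive-eigenvector characterization from Perron--Frobenius theory.
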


\begin{co}
If a nonnegative matrix $A$ has a positive right (or left) eigenvector associated with its Perron root, then there is a positive matrix arbitrarily close to $A$ that has the same eigenvalues as $A$, except that the Perron root has increased by an arbitrarily small amount.
\end{co}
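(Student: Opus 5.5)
The corollary follows directly from Theorem \ref{Brauer}, applied to whichever of $A$ or $A^T$ carries the positive Perron eigenvector. First suppose $A$ has a positive right eigenvector $\mathbf{x}$ for its Perron root $\rho$. Normalize it, say $\|\mathbf{x}\|=1$ or $\mathbf{x}^T\mathbf{x}=1$. Then choose any entry-wise positive vector $\mathbf{y}$; the most convenient choice is $\mathbf{y}=\mathbf{e}$, the all-ones vector. For $\epsilon>0$ set
$$
A_\epsilon = A+\epsilon\,\mathbf{x}\mathbf{y}^T .
$$
Because $\mathbf{x}>0$ and $\mathbf{y}>0$, the rank-one matrix $\mathbf{x}\mathbf{y}^T$ is entry-wise positive. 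Since $A\geq 0$, the sum $A_\epsilon$ is entry-wise positive for every $\epsilon>0$.

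By Theorem \ref{Brauer}, the spectrum of $A_\epsilon$ is $\{\rho+\epsilon\,\mathbf{y}^T\mathbf{x}\}\cup(\sigma-\{\rho\})$. All eigenvalues other than $\rho$ are therefore unchanged, and $\rho$ is replaced by $\rho+\delta$ with $\delta=\epsilon\,\mathbf{y}^T\mathbf{x}>0$. The second sentence of Theorem \ref{Brauer} confirms that $\rho+\delta$ is the Perron root of $A_\epsilon$. Since $\mathbf{y}^T\mathbf{x}$ is a fixed positive number, $\delta\to 0$ as $\epsilon\to 0$. Also $\|A_\epsilon-A\|=\epsilon\|\mathbf{x}\mathbf{y}^T\|\to 0$. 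So for any prescribed tolerances we can pick $\epsilon$ small enough that $A_\epsilon$ is arbitrarily close to $A$ and the increase $\delta$ is arbitrarily small.

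For the case of a positive left eigenvector, apply the same argument to $A^T$. Its positive right Perron eigenvector is the given left eigenvector of $A$, and $A^T$ has the same spectrum as $A$. This produces a positive matrix $B_\epsilon$ close to $A^T$ with the required spectrum. Transposing back, $B_\epsilon^T=A+\epsilon\,\mathbf{y}\mathbf{x}^T$ is positive, close to $A$, and has the same spectrum as $B_\epsilon$.

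There is no real obstacle. The only point needing care is to check that Theorem \ref{Brauer}'s spectral formula replaces exactly one copy of $\rho$, so that all other eigenvalues, including any repeats, are preserved. The theorem's statement "$\sigma-\{\rho\}$" provides exactly this.
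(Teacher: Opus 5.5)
Your proposal is correct and matches the paper's approach. The paper gives no separate proof and treats the corollary as an immediate consequence of the second sentence of Theorem~\ref{Brauer}: perturb by $\epsilon\,\mathbf{x}\mathbf{y}^T$ with $\mathbf{x},\mathbf{y},\epsilon>0$ and let $\epsilon\to 0$. Your transposition step for the left-eigenvector case (with $\mathbf{z}>0$ the left Perron vector, $A+\epsilon\,\mathbf{y}\mathbf{z}^T$ is positive and has the required spectrum) is the natural way to complete what the paper leaves implicit.
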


It is well known \cite{J} that an irreducible nonnegative matrix $A$ of order $n$ is similar to an irreducible matrix $B$ with constant row sums $\rho(A)$: since $A$ has a positive (right) eigenvector $(x_1,\dots,x_n)$ associated with its spectral radius, then $B=D^{-1}AD$, where $D=diag(x_1,\dots,x_n)$, is a nonnegative matrix similar to $A$ and with constant row sums $\rho(A)$.

For a reducible nonnegative matrix $A$ of order $n$, only co-spectrality to a nonnegative matrix  with constant row sums can be assured. In this case, the {\it Frobenius Normal Form}, arrived at by a suitable permutation matrix $P$, can reduce $A$ to a block triangular form

\begin{equation*}
P^{-1}AP=
\begin{bmatrix}
A_{11}     &          &          &            &              &         &  \\
0          & A_{22}   &          &            &              &         &  \\
\vdots     & \ddots   & \ddots   &            &              &         & \\
0          & \cdots   & 0        & A_{kk}     &              &         &  \\
A_{k+1,1}  & \cdots   & \cdots   & A_{k+1,k}  &  A_{k+1,k+1} &         &  \\
\vdots     &          &          & \vdots     &              & \ddots  &  \\
A_{k+r,1}  & \cdots   &   \cdots       & A_{k+r,k}  & \cdots       &  \cdots       & A_{k+r,k+r}
\end{bmatrix}
\end{equation*}
in which each block $A_{ii}$ is square and is either  irreducible  or a $1$-by-$1$ null matrix, and $\begin{bmatrix}
A_{k+i,1} \, \dots \, A_{k+i,k+i-1}
\end{bmatrix}$ is nonzero, for $i=1,\dots ,r$. The blocks $A_{ii}$ are called the {\it irreducible components} or {\it classes} of $A$. The blocks $A_{ii}$, $i=1,\dots,k$, are called the {\it final components} or {\it classes} of $A$ and the blocks $A_{ii}$ with $\rho(A_{ii})=\rho(A)$ are called the {\it basic components} or {\it classes} of $A$. If $A$ has a simple Perron root and its basic component is a final component, then $A$ has a positive eigenvector associated with $\rho(A)$, and $A$ is similar to a matrix $B$ with constant row sums, see \cite{JMPS,JMPS1}. When the Perron root is not simple, we have the following characterization:

\begin{teo}{\rm (\cite[Theorem 6.5.2]{G})}
Let $A$ be a reducible nonnegative matrix with spectral radius $\rho(A)$. Then there exists a positive eigenvector $(x_1,\dots,x_n)$ associated with $\rho(A)$ if and only if all the final components of $A$ are basic components and the non final components of $A$ have strictly smaller spectral radius. (i.e., the basic components are exactly the final components)
\end{teo}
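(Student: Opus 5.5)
We need to prove Gantmacher's theorem: for a reducible nonnegative $A$, a positive eigenvector for $\rho(A)$ exists if and only if the final classes are exactly the basic classes. Throughout we work with the Frobenius Normal Form above and split the positive vector $\mathbf{x}$ conformally into blocks $\mathbf{x}_i$. Since block $i$ only sees blocks $j\le i$, the equation $A\mathbf{x}=\rho\mathbf{x}$ reads, block by block,
$$A_{ii}\mathbf{x}_i+\sum_{j<i}A_{ij}\mathbf{x}_j=\rho\,\mathbf{x}_i .$$
The key tool is the following standard fact for a nonnegative irreducible (or $1\times 1$ zero) block $B$ and a vector $\mathbf{z}>0$: if $B\mathbf{z}\le \rho\mathbf{z}$ then $\rho(B)\le\rho$, and if moreover the inequality is strict in some entry then $\rho(B)<\rho$. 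This follows by multiplying by the positive left Perron vector of $B$; for the $1\times1$ zero block it is trivial.

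\emph{Necessity.} Suppose $\mathbf{x}>0$. For a final class $i\le k$ the sum over $j<i$ vanishes, so $A_{ii}\mathbf{x}_i=\rho\mathbf{x}_i$ with $\mathbf{x}_i>0$. Hence $\rho(A_{ii})=\rho$, i.e.\ the class is basic. If $\rho=0$ then $A_{ii}=0$ is $1\times1$, which is still consistent. For a non-final class $k+i$ the off-diagonal row block is nonzero, and all $\mathbf{x}_j>0$, so $\sum_{j}A_{k+i,j}\mathbf{x}_j$ is nonzero and nonnegative. Thus $A_{k+i,k+i}\mathbf{x}_{k+i}\le\rho\mathbf{x}_{k+i}$ with strict inequality somewhere, and the fact above gives $\rho(A_{k+i,k+i})<\rho$. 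For the $1\times1$ zero block this amounts to $\rho>0$, which follows since $\rho x=(\text{positive})$. So the non-final classes are not basic.

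\emph{Sufficiency.} Choose positive Perron vectors $\mathbf{x}_i$ for the final classes $i\le k$; since these classes are basic, $A_{ii}\mathbf{x}_i=\rho\mathbf{x}_i$. Then define the remaining blocks inductively in $i=1,\dots,r$ by
$$\mathbf{x}_{k+i}=(\rho I-A_{k+i,k+i})^{-1}\sum_{j<k+i}A_{k+i,j}\mathbf{x}_j .$$
This is legitimate because $\rho(A_{k+i,k+i})<\rho$. The inverse $(\rho I-B)^{-1}=\sum_m B^m/\rho^{m+1}$ is nonnegative, and for irreducible $B$ it is entrywise positive; for the $1\times1$ zero block it equals $1/\rho>0$. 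Here $\rho>0$, because a final class is basic and non-final classes have radius strictly below it, so some class has positive radius unless $A=0$. The case $A=0$ is not reducible with the required structure, or else the statement is trivially true. The sum is a nonzero nonnegative vector, since the row block is nonzero and the earlier $\mathbf{x}_j$ are positive by induction. Therefore $\mathbf{x}_{k+i}>0$, and the block equations hold by construction, so $\mathbf{x}>0$ is an eigenvector for $\rho$.

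The main technical point is the strict comparison lemma, together with the positivity of $(\rho I-B)^{-1}$ for irreducible $B$. Both come from Perron--Frobenius applied to the irreducible diagonal blocks. The degenerate $1\times1$ null classes need only a separate one-line check.
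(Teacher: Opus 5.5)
Your proof is correct and complete. The paper gives no proof of this statement; it only cites Gantmacher. What you wrote is essentially the classical argument found there: the block equations $A_{ii}\mathbf{x}_i+\sum_{j<i}A_{ij}\mathbf{x}_j=\rho\,\mathbf{x}_i$ plus the strict Perron comparison (via the positive left Perron vector) give necessity, and the inductive construction $\mathbf{x}_{k+i}=(\rho I-A_{k+i,k+i})^{-1}\sum_{j<k+i}A_{k+i,j}\mathbf{x}_j$ with an entrywise positive inverse gives sufficiency.

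The only loose phrasing is your justification of $\rho>0$. Positivity is needed only when a non-final class exists, and then $\rho>\rho(A_{k+i,k+i})\ge 0$ immediately. So the aside about $A=0$ can be dropped: $A=0$ is in fact reducible, and it is covered vacuously because it has no non-final classes.
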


In this case, as in the irreducible case, $B=D^{-1}AD$, where $D=diag(x_1,\dots,x_n)$, is a nonnegative matrix similar to $A$ and with constant row sums $\rho(A)$.

By a {\it weighted digraph} $G$, or simply {\it digraph}, we mean a triplet 
$(V,E,w)$ where $V$ is a nonempty finite set, $E\subset V\times V$ and 
$w \colon E\to \R ^+ $ is a positive real map on $E$. The elements in  $V$ 
 and  $E$  are called {\it vertices} and {\it arcs} respectively; the values of the map $w$
are called {\it weights}. The {\it adjacency matrix} of a weighted digraph $(V,E,w)$ with 
$V=\{v_1,\dots ,v_n\}$ is the matrix $A=(a_{ij})_{i,j=1}^{n}$ where $a_{ij}=w(v_i,v_j)$ if
 $(v_i,v_j)\in E$ and $a_{ij}=0$ otherwise. An arc of the form $(v_j,v_j)$ is called {\it loop}. Note that $A$ is a nonnegative matrix.

A sequence of different vertices  $v_1,v_2,\dots , v_r$,  $r\geq 2$, such that  
$(v_i,v_{i+1}) \in E$ for $i= 1,\dots ,r-1$  is called a {\it  path}  of {\it length}
$r-1$ joining  $v_1$ with $v_r$. A {\it cycle} of {\it length} $r$ or an $r$-{\it cycle} is 
a sequence of vertices $v_1,v_2,\dots , v_r,v_1$ where $v_1,v_2,\dots , v_r$ is a path and
$(v_r,v_{1}) \in E$.  The cycles of length 1 or 1-cycles are the loops. A {\it linear digraph} is a collection of disjoint cycles.

\begin{teo}\label{teocof}
{\rm \bf (Coefficient Theorem for weighted digraphs)} Let $G$ 
be a weighted digraph, $A$ its adjacency matrix and 
$P_G(x)=P_A(x)=\det (xI-A)=x^{n}+k_{1}x^{n-1}+k_{2}x^{n-2}+\dots+k_{n}$. Then, for each 
$1\leq i\leq n$,
\begin{equation}
\label{1.1}
k_i=\sum_{L\in \cal {L} \it _i} (-1)^{p(L)}\Pi (L)
\end{equation}
\noindent where $\cal {L}\it _i$ is the set of all linear subdigraphs $L$ of $G$ with 
exactly $i$ vertices; $p(L)$ denotes the number of cycles of $L$;
 $\Pi (L)$ denotes the product of the weights of all arcs belonging to $L$. {\rm (See \cite{CDS})}
 \end{teo}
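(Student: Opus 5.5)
The plan is to expand $\det(xI-A)$ by the Leibniz formula and then sort the permutation terms according to the cycle structure of each permutation. Write
$$\det(xI-A)=\sum_{\pi\in S_n}\mathrm{sgn}(\pi)\prod_{j=1}^n (x\delta_{j\pi(j)}-a_{j\pi(j)}).$$
For each permutation $\pi$ I would expand the product over $j$. The idea is to pick, for every fixed point $j$ of $\pi$, either the term $x$ or the term $-a_{jj}$. For non-fixed points the only available factor is $-a_{j\pi(j)}$. A term therefore contributes to the coefficient of $x^{n-i}$ exactly when the set $S$ of indices not contributing $x$ has $|S|=i$. In that case $\pi$ fixes every vertex outside $S$ and restricts to a permutation $\tau$ of $S$.

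Next, regroup the sum over pairs $(S,\tau)$, where $|S|=i$ and $\tau$ is a permutation of $S$. The contribution of such a pair is
$$\mathrm{sgn}(\tau)(-1)^{i}\prod_{j\in S}a_{j\tau(j)}.$$
The product is nonzero only if every arc $(v_j,v_{\tau(j)})$ lies in $E$. In that case the disjoint cycles of $\tau$ form a linear subdigraph $L$ of $G$ with vertex set $S$, with fixed points of $\tau$ appearing as loops. The product then equals $\Pi(L)$. Conversely, each linear subdigraph with $i$ vertices arises from exactly one pair $(S,\tau)$. This bijection is the one step that needs care: a loop must be identified with a fixed point of $\tau$ chosen with factor $-a_{jj}$, and it must not be double-counted against the choice $x$.

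Finally, I would check the sign. A permutation of $i$ elements with $p$ cycles has sign $(-1)^{i-p}$. Hence
$$\mathrm{sgn}(\tau)(-1)^i=(-1)^{i-p(L)}(-1)^i=(-1)^{p(L)},$$
which gives the formula $k_i=\sum_{L\in\mathcal{L}_i}(-1)^{p(L)}\Pi(L)$. I expect the main obstacle to be bookkeeping rather than anything deep, namely making the correspondence between expansion terms and linear subdigraphs (including loops) fully precise. Alternatively, I could use the identity that $k_i$ equals $(-1)^i$ times the sum of the principal $i\times i$ minors of $A$, then apply the Leibniz formula to each minor, which handles the bookkeeping automatically.
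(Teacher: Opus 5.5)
Your proposal is correct and is essentially the standard argument. The paper gives no proof of its own and simply cites \cite{CDS}, where the result is obtained the same way: expand $\det(xI-A)$ (equivalently, $(-1)^i$ times the sum of the principal $i\times i$ minors) by Leibniz, identify each nonzero term with a linear subdigraph $L$ on $i$ vertices, and use $(-1)^i\,\mathrm{sgn}(\tau)=(-1)^i(-1)^{i-p(L)}=(-1)^{p(L)}$.
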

 
In the last section we will use the following result:
 
\begin{teo} {\rm (\cite[Theorem 39]{TAAMP})} \label{t39}
Let $p$ and $n$ be integers, such that $2\leq p\leq n\leq 2p+1$. Let $P(x)=x^n+k_px^{n-p}+\cdots +k_{n-1}x+k_n$. Then 
the following statements are equivalent:
\vspace*{-.2cm}
\begin{itemize}
\item [{\it i)}] $P(x)$ is realizable; 
\vspace*{-.2cm}
\item [{\it ii)}] the coefficients of $P(x)$ satisfy:
\vspace*{-.3cm}
\begin{itemize}
\item [{\it a)}] $k_p,\ldots ,k_{2p-1}\leq 0$;
\item [{\it b)}] $k_{2p}\leq\frac{k_p^2}{4}$;
\item [{\it and c)}] $k_{2p+1}\leq\left\{
\begin{array}{ll}
	k_pk_{p+1}&\quad \mbox{if }\;\;k_{2p}\leq 0,\\
	\vspace*{-.3cm} & \\
	k_{p+1}\Big(\frac{k_p}{2}-\sqrt{\frac{k_p^2}{4}-k_{2p}}\Big)&\quad \mbox{if }\;\; k_{2p}>0.\\
\end{array}\right.$
\end{itemize}
\end{itemize}
\end{teo}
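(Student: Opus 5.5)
The plan is to work entirely with the Coefficient Theorem (Theorem \ref{teocof}). For necessity, take a weighted digraph $G$ on $n$ vertices realizing $P(x)$. First I would show that $G$ has no cycles of length less than $p$. By Newton's identities, $k_1=\cdots=k_{p-1}=0$ forces $\mathrm{Tr}(A^j)=0$ for $j<p$. Since $A\ge 0$, this means there are no closed walks of length $j<p$, and hence no $j$-cycles.

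Consequently, for $p\le i\le 2p-1$, every linear subdigraph on $i$ vertices is a single $i$-cycle, since two disjoint cycles need at least $2p$ vertices. Then (\ref{1.1}) gives $k_i=-C_i\le 0$, where $C_i$ is the total weight of $i$-cycles. This proves a).

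For b), the linear subdigraphs on $2p$ vertices are single $2p$-cycles or pairs of disjoint $p$-cycles. Hence
\[
k_{2p}=-C_{2p}+\sum_{\{a,b\}} c_ac_b ,
\]
where the sum runs over pairs of vertex-disjoint $p$-cycles with weights $c_a,c_b$. Consider the "disjointness graph" on the set of $p$-cycles. Because $n\le 2p+1<3p$, it contains no triangle. The weighted Mantel (Motzkin--Straus) inequality for triangle-free graphs then gives
\[
\sum c_ac_b\le \tfrac14\Bigl(\sum_a c_a\Bigr)^2=\tfrac{k_p^2}{4}.
\]
Since $C_{2p}\ge 0$, this yields b).

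For c), a linear subdigraph on $2p+1$ vertices is either a $(2p+1)$-cycle or a disjoint pair consisting of a $p$-cycle and a $(p+1)$-cycle. Thus
\[
k_{2p+1}\le \sum c_a d_b ,
\]
summed over disjoint pairs, where $d_b$ is the weight of a $(p+1)$-cycle. Now $n\le 2p+1$ forces such a pair to cover all vertices, so every $(p+1)$-cycle is disjoint from at most the $p$-cycles living on its complement.

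If $k_{2p}\le 0$, I would aim for the crude bound $\sum c_ad_b\le(\sum c_a)(\sum d_b)=k_pk_{p+1}$. If $k_{2p}>0$, I would split the $p$-cycles according to the bipartition induced by the disjoint $p$-cycle pairs, writing $\sum_a c_a=s+t$ with $st\ge \sum c_ac_b\ge k_{2p}$. A $(p+1)$-cycle disjoint from a $p$-cycle on one side cannot be disjoint from the one on the other side. This should bound $\sum c_ad_b$ by $(-k_{p+1})\min(s,t)$, and minimizing over the feasible $s,t$ gives the root expression $\frac{-k_p}{2}-\sqrt{k_p^2/4-k_{2p}}$. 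Making this bipartition argument rigorous when several $p$-cycles overlap is the step I expect to be the main obstacle. I would first try to show that the triangle-free disjointness graph can be assumed complete bipartite after merging cycles on the same vertex set, and then optimize.

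For sufficiency, I would build explicit digraphs. Take a "flower" of cycles of lengths $p,p+1,\dots,n$ through a common vertex, whose weights realize $k_p,\dots,k_{2p-1}$ and the cycle part of $k_{2p},k_{2p+1}$. When $k_{2p}>0$ or more $(2p+1)$-vertex structure is needed, add a second $p$-cycle disjoint from the first, splitting the total $p$-weight $-k_p$ as $s+t$ with $st=k_{2p}$. The inequality in c) then exactly says that the remaining $(2p+1)$-cycle weight is nonnegative.
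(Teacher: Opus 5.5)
Your arguments for a) and b) are sound. Newton's identities exclude cycles shorter than $p$, and the linear subdigraphs on at most $2p+1$ vertices are as you list them. Since $3p>2p+1$, the disjointness graph of the $p$-cycles is triangle-free, so Motzkin--Straus gives $k_{2p}\le k_p^2/4$.

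The genuine gap is in c) for $k_{2p}>0$, and it is more than a matter of rigor: the inequality you aim at is false. The theorem's bound is $k_{p+1}\bigl(\tfrac{k_p}{2}-\sqrt{\tfrac{k_p^2}{4}-k_{2p}}\bigr)=(-k_{p+1})\,m_0$, where $m_0=-\tfrac{k_p}{2}+\sqrt{\tfrac{k_p^2}{4}-k_{2p}}$ is the \emph{larger} root of $m(-k_p-m)=k_{2p}$. It is attained when the $(p+1)$-cycle is disjoint from the \emph{heavier} $p$-cycle; this is the extremal configuration the paper describes in the exceptional cases. For example, $x^5-3x^3-x^2+2x+2=(x^2-2)(x^3-x-1)$ is realized by a $2$-cycle of weight $2$ on $\{4,5\}$, together with a $2$-cycle on $\{1,2\}$ and a $3$-cycle on $\{1,2,3\}$, both of weight $1$. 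Here $s=1$, $t=2$ and $k_5=2$, while your estimate $(-k_3)\min(s,t)$ (equivalently, the root $-\tfrac{k_p}{2}-\sqrt{\cdot}$) gives $1$. A $(p+1)$-cycle collects the weight of whichever side it avoids, so the right quantity is $\max(s,t)$, to be maximized, not minimized, over the feasible $(s,t)$. Also, the reduction to a complete bipartite disjointness graph is not available. That graph sits inside the Kneser graph $K(2p+1,p)$, which has odd cycles: for $p=2$, $2$-cycles on $\{1,2\},\{3,4\},\{5,1\},\{2,3\},\{4,5\}$ form a pentagon, and merging cycles on equal vertex sets does not remove it.

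No bipartition is needed. Your own observation is the key: all $p$-cycles disjoint from a $(p+1)$-cycle $\gamma$ live on the single $p$-set $V\setminus V(\gamma)$. This gives $k_{2p+1}\le(-k_{p+1})\max_U c(U)$, where $c(U)$ is the total weight of the $p$-cycles with vertex set $U$. To show $x=c(U)\le m_0$, you may assume $x\ge -k_p/2$. Split the remaining $p$-cycles into those inside the $(p+1)$-set $V\setminus U$ (weight $y$) and the rest (weight $z$, each meeting $U$). There are no disjoint pairs among the cycles on $U$, none among those inside $V\setminus U$, and none between $U$ and the third class. Motzkin--Straus on the third class then gives
\[
k_{2p}\le xy+yz+\tfrac{z^2}{4}\le x(y+z)=x(-k_p-x),
\]
because $x\ge y+z$; hence $x\le m_0$ and c) follows.

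Your sufficiency sketch needs three matching corrections:
\begin{itemize}
\item The $(p+1)$-cycle must avoid the $p$-cycle of weight $m_0$, as in the paper's EBL digraphs, where $C_{p,1}$ carries the smaller root and $C_{2p+1,p+2}$ carries $m_0$.
\item When $k_{2p}<0$ there is no split with $st=k_{2p}$. Instead, put all of $-k_p$ on one $p$-cycle avoided by the $(p+1)$-cycle, and add a $2p$-cycle of weight $-k_{2p}$ and a $(2p+1)$-cycle of weight $k_pk_{p+1}-k_{2p+1}$.
\item Build everything as initial/final cycles of an EBL digraph, so that superposing cycles creates no unintended ones.
\end{itemize}
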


 \section{When Every Realization is Irreducible}\label{S3}
 
 The spectrum of a reducible matrix is the union of the spectra of its irreducible diagonal blocks. If $\sigma$ is $\mathcal{R}$ (realizable) and $\sigma$ can be partitioned into two or more spectra, each of which is $\mathcal{R}$, we then call $\sigma$ {\it partitionable}. Of course, no spectrum with a Perron root and all other eiegenvalues either negative, or in complex conjugate pairs can be partitionable. Note that if $\sigma$ is $\mathcal{R}$ and is non-partitionable, then any realization of $\sigma$ must be $\mathcal{IR}$. This can happen in a variety of ways, but the simplest is the following.
 
 \begin{teo}\label{Teo7}
 If $\sigma$ includes only one nonnegative member, then any realization of $\sigma$ is irreducible, {\it i.e.}, $\mathcal{R}$ is equivalent to $\mathcal{IR}$.
 \end{teo}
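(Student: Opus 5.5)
We argue by contradiction, using the block triangular structure of reducible nonnegative matrices recalled in Section \ref{S2}. Suppose $\sigma$ is $\mathcal{R}$ and $A$ is a nonnegative realization of $\sigma$ that is reducible. Then, after a permutation similarity, $A$ is block triangular with at least two square diagonal blocks. Grouping these blocks, we may write the Frobenius normal form as
$$
\begin{bmatrix} A_1 & 0\\ \ast & A_2\end{bmatrix},
$$
where $A_1$ and $A_2$ are nonnegative square matrices of positive order. The spectrum $\sigma$ is then the disjoint union (counting multiplicities) of $\sigma(A_1)$ and $\sigma(A_2)$, and each part is itself realizable. Here we already use that the spectrum of a block triangular matrix is the union of the spectra of its diagonal blocks, as noted at the start of this section.

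Next we apply the Perron condition, which is one of the necessary conditions from the Introduction, to each block separately. Since $A_i$ is nonnegative of positive order, $\rho(A_i)$ is an eigenvalue of $A_i$ and is a nonnegative real number. Note that this is true even if $A_i$ is a $1$-by-$1$ zero block, where $\rho(A_i)=0$. Hence $\sigma(A_1)$ and $\sigma(A_2)$ each contain a nonnegative member. Because $\sigma(A_1)$ and $\sigma(A_2)$ are disjoint sublists of $\sigma$, the list $\sigma$ contains at least two nonnegative members, counted with multiplicity. This contradicts the hypothesis, so every realization of $\sigma$ is irreducible. Since $\mathcal{IR}\subseteq\mathcal{R}$ holds trivially, the two notions coincide for such $\sigma$. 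Equivalently, $\sigma$ is non-partitionable, and the observation preceding the theorem applies.

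The only point needing care is the interpretation of ``only one nonnegative member'' as counting multiplicity, so that for example $\{0,0\}$ or $\{1,1\}$ is excluded. We also need to make sure zero eigenvalues are counted as nonnegative, because a trivial $1$-by-$1$ zero block contributes the eigenvalue $0$. With that reading the argument is immediate, and no step is a real obstacle.
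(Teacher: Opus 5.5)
Your argument is correct and is essentially the paper's proof: the paper notes that each part of a partition of $\sigma$ into realizable spectra (in particular, the spectra of the diagonal blocks of a reducible realization) must contain its own nonnegative Perron root, which is impossible when $\sigma$ has only one nonnegative member. You argue directly from the block triangular form rather than through partitionability. Your remarks on counting multiplicities and on $1$-by-$1$ zero blocks contributing the eigenvalue $0$ make explicit what the paper leaves implicit.
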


\begin{proof}
If $\sigma$ were partitionable, then each part of the partition would have a Perron root. Since there is only one possibility, $\sigma$ cannot be partitionable.
\end{proof}
 
(Of course, $\sigma$ could be partitionable and still be  $\mathcal{IR}$.) This simple fact verifies $\mathcal{IR}$ in many cases, especially in low dimensions. We suspect that ``nonnegative'' may be replaced by ``positive'' in the theorem (discussion later), but this is subtle. There are spectra that are realizable only after appending 0's (see \cite{J} for the first example). After appending the minimum number of 0's, these are also non-partitionable. We will see that if $\sigma$ is $\mathcal{IR}$, then appending any number of 0's will result in an $\mathcal{IR}$ spectrum, see Section \ref{S4}.

 \section{Appending 0's}\label{S4}

For a spectrum $\sigma$, we denote by $\sigma+b$, the spectrum that has all the elements of $\sigma$, with, additionally, the real element $b$ adjoined. Clearly, if $b\geq 0$, then $\sigma+b$ is $\mathcal{R}$ whenever $\sigma$ is $\mathcal{R}$. If $b>0$, as we shall see in the section about $n=3$,  $\sigma$ can be $\mathcal{IR}$ and $\sigma+b$ not $\mathcal{IR}$. ($\{2,-2\}$ is $\mathcal{IR}$, but $\{2,1,-2\}$ is not). What about $b=0$? We will see that $\sigma$ $\mathcal{IR}$ implies that $\sigma+0$ is $\mathcal{IR}$, a helpful fact, and a bit more. 

Applying Brauer, Theorem \ref{Brauer}, it follows that the properties  $\mathcal{IR}$ and  $\mathcal{R}_{>0}$ are monotone increasing in the Perron root and that for any $\sigma$, satisfying the reality condition, there is a positive ray that is  $\mathcal{IR}$.

There are situations in which the hypothesis above ($\sigma_{-\epsilon}$ $\mathcal{R}$) is not satisfied and 0's may still be appended to $\sigma$ while preserving $\mathcal{IR}$. Consider $\sigma=\{a,-a\}$, with $a>0$. It is $\mathcal{IR}$ as
$$
\left[\begin{array}{cc}
0 &a\\
a& 0
\end{array}
\right]
$$
is a realization. What about
$$
\{a,0,0,\dots,0,-a\}
$$
in which $n-2\,$ 0's are appended. The trace is still 0, so that $\mathcal{R}_{>0}$ is not possible, but $\mathcal{IR}$ still is possible. Consider the matrix
$$
A=\left[\begin{array}{ccccc}
0& a_1& \cdots& \cdots  &a_{n-1}\\
a_1&0 &0&  \cdots &0\\
\vdots& 0 & \ddots & \ddots & \vdots\\
\vdots& \vdots & \ddots & \ddots & 0\\
a_{n-1} & 0 & \cdots & 0 & 0
\end{array}
\right]
$$ 
in which $a_1,a_2,\dots, a_{n-1}>0$ and $\sum\limits _{j=1}^{n-1}a_j^2=a^2$. Then $\mbox{rank } A=2$, $\rm{Tr}(A^2)=2a^2$, and because of symmetry, $A$ has spectrum $\{a,0,\dots,0,-a\}$, as claimed.

This occurrence is general.
 
 \begin{teo} \label{addx}
 If $A$ is an irreducible nonnegative realization of $\sigma$ and $x\in \R$, then $\sigma+x$ is  $\mathcal{IR}$ if $|x|$ is smaller than or equal to the largest diagonal entry of $A$. In the event that $A$ is also symmetric, then $\sigma+x$ is symmetrically  $\mathcal{IR}$.
 \end{teo}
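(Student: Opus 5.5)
We construct an $(n+1)$-by-$(n+1)$ bordered matrix that realizes $\sigma+x$ and whose irreducibility is evident. Let $A=(a_{ij})$ be the irreducible nonnegative realization of $\sigma$. Let $a_{kk}=d$ be a largest diagonal entry and let $|x|\le d$. Write $x=d\,c$ with $|c|\le 1$; if $d=0$ then $x=0$, and this case is handled the same way with $c$ playing no role.

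Set $s=\sqrt{d^2-x^2}$ when $d>0$, and consider the $2$-by-$2$ symmetric matrix
$$
M=\left[\begin{array}{cc} d & 0\\ 0& x\end{array}\right]
$$
together with an orthogonal change of basis chosen to turn the diagonal pair $(d,x)$ into a pair with the same eigenvalues but different diagonal entries. Concretely, I would replace vertex $k$ by two vertices $k,k'$ as follows. The new matrix $\tilde A$ of order $n+1$ agrees with $A$ off row/column $k,k'$. It has diagonal entries $\tilde a_{kk}=\tilde a_{k'k'}=(d+x)/2\ge 0$ and entries $\tilde a_{kk'}=\tilde a_{k'k}=(d-x)/2\ge 0$. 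Row $k$ of $A$ (off-diagonal part) is split as $\tfrac{1}{\sqrt2}$ to each of rows $k,k'$, and likewise for column $k$. In other words, $\tilde A=Q\,(A\oplus[x])\,Q^T$ with the indices of the $k$-th and new coordinates reordered, where $Q$ is the identity except for the Hadamard rotation $\tfrac{1}{\sqrt2}\left[\begin{smallmatrix}1&1\\1&-1\end{smallmatrix}\right]$ acting on coordinates $k$ and $n+1$.

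The key computations are as follows. First, $Q$ is orthogonal, so $\tilde A$ is similar to $A\oplus[x]$ and has spectrum $\sigma+x$. Second, the rotated $2$-by-$2$ block is
$$
\tfrac12\left[\begin{array}{cc}d+x& d-x\\ d-x& d+x\end{array}\right],
$$
which is nonnegative exactly because $|x|\le d$. Third, the off-block entries: coupling between $A$ and $[x]$ is zero, so the entries of rows/columns $k$ and $n+1$ in the other columns/rows become $a_{kj}/\sqrt2$ and $\pm a_{kj}/\sqrt2$. Here the sign is the obstacle, since the Hadamard rotation produces $-a_{kj}/\sqrt2$ in the new row.

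To fix this, I will instead use the rank-one/Brauer-type idea: choose $Q$ to be the rotation by the angle $\theta$ that makes the new row's off-block entries nonnegative. The cleaner route that avoids signs altogether is the non-orthogonal similarity $S=I+e_{n+1}e_k^T$ (add row $k$ to the new row, subtract column). Let $B=A\oplus[x]$. Then $SBS^{-1}$ has new row equal to $(a_{k1},\dots,a_{kn},x)$ minus $x e_k$-component corrections; specifically, the entry $(n+1,k)$ is $a_{kk}-x=d-x\ge0$, entry $(n+1,n+1)=x$, and column $k$ loses the new column, giving entry $(k,n+1)=0$. With this $S$ the new vertex receives arcs but sends none, which is reducible. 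So I would combine the two: take the similarity $S_t=I+t\,e_{n+1}e_k^T$ followed by its transpose-type counterpart, tuning $t$ so that all entries are nonnegative. The $2$-by-$2$ block on $\{k,n+1\}$ then needs diagonal entries whose sum is $d+x$, product minus the off-diagonal product equal to $dx$, and off-diagonals positive; such a nonnegative block exists iff $|x|\le d$, which is where the hypothesis enters.

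The main obstacle is getting all off-block entries nonnegative simultaneously. My plan is to duplicate vertex $k$: give the new vertex $k'$ the same out-row $\alpha\,(a_{kj})_{j\ne k}$ and in-column $\beta\,(a_{ik})_{i\ne k}$, with suitable $\alpha,\beta\ge0$. I would then verify via the Coefficient Theorem (Theorem~\ref{teocof}), or via an explicit eigenvector computation, that the characteristic polynomial is $(\lambda-x)P_A(\lambda)$. Irreducibility is immediate, because $k'$ communicates with $k$ in both directions. If $A$ is symmetric, I take $\alpha=\beta$ and a symmetric $2$-by-$2$ block, which yields a symmetric realization.
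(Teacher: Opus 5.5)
Your second paragraph already contains the paper's proof, and it works. The sign obstacle you raise next does not exist. In $\tilde A=Q\,(A\oplus[x])\,Q^T$, with the Hadamard block $\tfrac{1}{\sqrt2}\left[\begin{smallmatrix}1&1\\1&-1\end{smallmatrix}\right]$ acting on coordinates $k,n+1$, the entry $-1$ only ever multiplies row or column $n+1$ of $A\oplus[x]$. Those vanish off the diagonal. So for $j\notin\{k,n+1\}$, rows $k$ and $n+1$ of $\tilde A$ have entries $\tfrac{1}{\sqrt2}(a_{kj}+0)$ and $\tfrac{1}{\sqrt2}(a_{kj}-0)$, both equal to $a_{kj}/\sqrt2\ge 0$. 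Likewise both columns carry $a_{ik}/\sqrt2$. The $-1$ only affects the $2\times2$ block, which you computed correctly. Orthogonal similarity gives spectrum $\sigma+x$, $|x|\le d$ gives nonnegativity, and $Q$ symmetric orthogonal preserves symmetry. That is the whole argument.

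As written, though, the proposal has a genuine gap. Having discarded this construction, you end with a vertex-duplication scheme that leaves open the treatment of row and column $k$, the values of $\alpha,\beta$, and the $2\times2$ block. The identity $\det(\lambda I-\tilde A)=(\lambda-x)P_A(\lambda)$ is only promised, never checked. These choices are not free. For instance, keep row and column $k$ of $A$ unchanged, append the copies $\alpha(a_{kj})$ and $\beta(a_{ik})$, and use the block $\tfrac12\left[\begin{smallmatrix}d+x&d-x\\d-x&d+x\end{smallmatrix}\right]$. Comparing $\mathrm{tr}(\tilde A^2)$ with $\mathrm{tr}(A^2)+x^2$ then forces $\alpha\beta\sum_{i\ne k}a_{ki}a_{ik}=0$.

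Your irreducibility argument is also off. Vertices $k$ and $k'$ need not be adjacent at all, since the block is $\mathrm{diag}(d,d)$ when $x=d$. The correct reason, as in the paper, is that $k$ and $k'$ each inherit all arcs between vertex $k$ and the other vertices of $A$. These arcs exist because $A$ is irreducible of order $n\ge 2$, so every path through $k$ in the digraph of $A$ lifts. The condition $n\ge2$ is really needed: for $A=[d]$ with $d>0$ and $x=d$, the spectrum $\{d,d\}$ is not $\mathcal{IR}$.
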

 
 \begin{proof} Suppose, without loss of generality, that the largest diagonal entry of $A=(a_{ij})$ is $a_{nn}$. Let $A_1$ be the matrix obtained by deleting last row and column of $A$. For
 $$
 S=\frac{1}{\sqrt{2}}
 \left[\begin{array}{cr}
 1 &1\\
 1 & -1
 \end{array}
 \right]
 $$
 we have
 $$
 \left[\begin{array}{cc}
 I &0\\
 0 & S^{-1}
 \end{array}
 \right]
 \left[\begin{array}{ccc|cc}
  & & & a_{1n}& 0\\
& A_1&  & \vdots& \vdots\\
& & & a_{n-1,n} &0 \\
 \hline 
 a_{n1} & \cdots& a_{n,n-1}&a_{nn}&0\\
 0 & \cdots & 0 & 0 & x
 \end{array}
 \right]
 \left[\begin{array}{cc}
 I &0\\
 0 & S
 \end{array}
 \right] = 
 \left[\begin{array}{ccc|cc}
  & & & \frac{a_{1n}}{\sqrt{2}}&  \frac{a_{1n}}{\sqrt{2}}\\
& A_1&  & \vdots& \vdots\\
& & & \frac{a_{n-1,n}}{\sqrt{2}}&\frac{a_{n-1,n}}{\sqrt{2}} \\
\vspace*{-.3cm}& & & &\\
\hline 
\vspace*{-.3cm}& & & &\\
 \frac{a_{n1} }{\sqrt{2}}& \cdots& \frac{a_{n,n-1}}{\sqrt{2}}&\frac{a_{nn}+x}{2}&\frac{a_{nn}-x}{2}\\
  \frac{a_{n1} }{\sqrt{2}} & \cdots &  \frac{a_{n,n-1}}{\sqrt{2}}&  \frac{a_{nn}-x}{2} & \frac{a_{nn}+x}{2}
 \end{array}
 \right].
 $$
 This matrix is nonnegative if $|x|\leq a_{nn}$, and irreducible, as the connectivity of the new vertex is at least that of the original vertex $n$. If $A$ were symmetric the new matrix is also. \end{proof}

 \begin{co} \label{addzeros}
 If $\sigma$ is $\mathcal{IR}$, then so is $\sigma+0$.
 \end{co}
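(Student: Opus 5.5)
This follows directly from Theorem \ref{addx} with $x=0$. Let $A$ be an irreducible nonnegative realization of $\sigma$, which exists since $\sigma$ is $\mathcal{IR}$. The hypothesis of Theorem \ref{addx} asks that $|x|$ be at most the largest diagonal entry of $A$. For $x=0$ this is automatic, because every diagonal entry of a nonnegative matrix is $\geq 0$. Theorem \ref{addx} then says $\sigma+0$ is $\mathcal{IR}$. If $A$ was symmetric, the realization of $\sigma+0$ is symmetric too.

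The one case worth checking is when every diagonal entry of $A$ is zero, for instance the trace $0$ examples above. Then the matrix built in the proof of Theorem \ref{addx} is still valid. We choose a vertex $n$, which may have $a_{nn}=0$, and adjoin the $1$-by-$1$ block $[0]$. Conjugating by $I\oplus S$ gives a matrix whose trailing $2$-by-$2$ block is zero. Its off-diagonal entries $a_{in}/\sqrt{2}$ and $a_{ni}/\sqrt{2}$ are nonnegative.

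Irreducibility holds because both new vertices copy the in- and out-neighbourhoods of the old vertex $n$ inside $A_1$. We may assume $n\geq 2$; the case $n=1$ gives $\sigma=\{a\}$ with $a\geq 0$, and $\{a,0\}$ is realized by the irreducible matrix $\frac12\left[\begin{smallmatrix} a&a\\ a&a\end{smallmatrix}\right]$ when $a>0$, or by $\left[\begin{smallmatrix} 0&1\\ 0&0\end{smallmatrix}\right]$ when $a=0$. For $n\geq 2$, vertex $n$ of $A$ has both an in-arc and an out-arc to $A_1$, since $A$ is irreducible. Each new vertex inherits these arcs, so the new digraph is strongly connected.

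Similarity preserves the spectrum, so the construction realizes $\sigma\cup\{0\}=\sigma+0$. The only possible obstacle is this zero-diagonal edge case, and it is covered because the proof of Theorem \ref{addx} needs only $|x|\leq a_{nn}$, which holds with equality.

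Applying the result repeatedly shows that appending any number of $0$'s to an $\mathcal{IR}$ spectrum keeps it $\mathcal{IR}$.
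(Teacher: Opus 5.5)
Your main argument is the paper's own: take an irreducible realization $A$ of $\sigma$ and apply Theorem~\ref{addx} with $x=0$, which is allowed because every diagonal entry of $A$ is $\geq 0$. Your check that, for $n\geq 2$, both copies of vertex $n$ inherit an in-arc from $A_1$ and an out-arc to $A_1$ is correct. It also makes precise the paper's looser remark that ``the connectivity of the new vertex is at least that of the original vertex $n$''.

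The one step that fails is your treatment of $n=1$, $a=0$. The matrix $\left[\begin{smallmatrix}0&1\\0&0\end{smallmatrix}\right]$ is upper triangular, hence reducible: there is no path from vertex $2$ to vertex $1$. In fact $\{0,0\}$ has no irreducible nonnegative realization at all. Zero trace forces $a_{11}=a_{22}=0$, and then $\det=-a_{12}a_{21}<0$ whenever $a_{12},a_{21}>0$; equivalently, an irreducible nonnegative matrix of order at least $2$ has positive Perron root.

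The right way to handle this case is through the convention on $1$-by-$1$ matrices. Under the standard convention (e.g.\ Horn--Johnson) the $1$-by-$1$ zero matrix is reducible, so $\{0\}$ is not $\mathcal{IR}$ and the case is vacuous. Under the paper's literal definition of irreducibility, $\sigma=\{0\}$ would instead be a genuine counterexample to the corollary; there the construction of Theorem~\ref{addx} yields the $2$-by-$2$ zero matrix. With that sentence replaced, your proof is complete.
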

 
 \begin{proof}
 In an irreducible realizing matrix for $\sigma$,  any diagonal entry is at least $0$, and $0\leq 0$.
 \end{proof}
 
  \begin{co} \label{addzerospol}
 If $P(x)$ is $\mathcal{IR}$, then so is $xP(x)$.
 \end{co}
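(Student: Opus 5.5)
This is the polynomial version of Corollary \ref{addzeros}, so the plan is to translate between polynomials and spectra and then invoke the spectral result.

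Suppose $P(x)$ is $\mathcal{IR}$. By definition there is a strongly connected weighted digraph $G$ with characteristic polynomial $P(x)$. Equivalently, its adjacency matrix $A$ is an irreducible nonnegative $n$-by-$n$ matrix with $\det(xI-A)=P(x)$. The spectrum $\sigma$ of $A$ is exactly the list of roots of $P(x)$, so $\sigma$ is $\mathcal{IR}$.

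By Corollary \ref{addzeros}, $\sigma+0$ is also $\mathcal{IR}$. To make this concrete, apply Theorem \ref{addx} with $x=0$. The condition $|x|\leq \max_i a_{ii}$ holds trivially, because every diagonal entry of $A$ is nonnegative. The construction in that proof then gives an irreducible nonnegative $(n+1)$-by-$(n+1)$ matrix $\tilde A$. This matrix is similar to $A\oplus[0]$, so
$$\det(xI-\tilde A)=\det(xI-A)\cdot x = xP(x).$$
Hence $xP(x)$ is the characteristic polynomial of an irreducible nonnegative matrix, that is, of a strongly connected weighted digraph. Therefore $xP(x)$ is $\mathcal{IR}$.

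The only point that needs care is that $\mathcal{IR}$ for a polynomial and $\mathcal{IR}$ for a spectrum are the same notion. This holds because a matrix's characteristic polynomial determines its eigenvalue list with multiplicities, and conversely. Beyond that, no real obstacle arises, since the work was already done in Theorem \ref{addx}.
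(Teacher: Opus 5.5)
Your proof is correct and follows the route the paper intends. Corollary \ref{addzerospol} is just the characteristic-polynomial restatement of Corollary \ref{addzeros}, which comes from the construction in Theorem \ref{addx} with $x=0$; that construction yields an irreducible nonnegative matrix similar to $A\oplus[0]$, hence with characteristic polynomial $xP(x)$. The only tacit point, which the paper shares, is the convention that the $1$-by-$1$ zero matrix is not irreducible, so the degenerate case $P(x)=x$ (where $x^2$ is not $\mathcal{IR}$) is excluded.
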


  \section{The Case of $\sigma$ Real}\label{S5}
 
 We first consider the case in which all elements of $\sigma$ are nonnegative. As $\rho(\sigma)$ must have multiplicity 1 in order to have irreducibility, we have
 $$
 \lambda_1 > \lambda_2 \geq \lambda_3\geq \cdots \geq \lambda_n \geq 0.
 $$
 Since $\lambda_1>\lambda_2$, there is an $\epsilon>0$ such that $\lambda_1-\epsilon=\rho(\sigma_{-\epsilon})$ still has multiplicity 1 in $\sigma_{-\epsilon}$. Then
$$
B=\left[\begin{array}{ccccc}
\lambda_1-\epsilon& 0& \cdots& \cdots  &0\\
\lambda_1-\epsilon-\lambda_2 & \lambda_2 &0&  \cdots &0\\
\vdots& 0 & \ddots & \ddots & \vdots\\
\vdots& \vdots & \ddots & \ddots & 0\\
\lambda_1-\epsilon-\lambda_n & 0 & \cdots & 0 & \lambda_n
\end{array}
\right]\geq 0
$$ 
has constant row sums and $\mathbf{e}= (1,\dots,1)^T$ as its right eigenvector. Then $A=B+\frac{\epsilon}{n}J>0$, with $J=(1)_{n\times n}$, has spectrum $\sigma$ and $\sigma$ is $\mathcal{R}_{>0}$ and thus $\mathcal{IR}$. Notice that the same construction succeeds if some of the eigenvalues are negative but $\lambda_n > \frac{1}{n-1}(\lambda_2-\lambda_1)$.

\begin{teo} \label{Teo8}
Suppose $\sigma$ is real with elements $\lambda_1>\lambda_2 \geq \lambda_3 \geq \cdots \geq \lambda_n$ and $\lambda_n > \frac{1}{n-1}(\lambda_2-\lambda_1)$. Then $\sigma$ is $\mathcal{R}_{>0}$ and thus $\mathcal{IR}$.
\end{teo}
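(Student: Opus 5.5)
The plan is to make rigorous the construction sketched just before the statement: build a nonnegative matrix with constant row sums whose spectrum is $\sigma_{-\epsilon}$, and then use Brauer's Theorem \ref{Brauer} with $\mathbf{x}=\mathbf{e}$ to restore the Perron root while making every entry positive.

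The first step is to choose $\epsilon$. The hypothesis $\lambda_n>\frac{1}{n-1}(\lambda_2-\lambda_1)$ is strict, so there is room to shrink $\lambda_1$ slightly. The worst entry of $B$ will be $\lambda_1-\epsilon-\lambda_n$, and a priori this could be negative when $\lambda_n$ is large. However, $\lambda_n\le\lambda_2<\lambda_1$, so $\lambda_1-\lambda_j\ge\lambda_1-\lambda_2>0$ for every $j\ge 2$. Hence I take
\[
0<\epsilon\le \lambda_1-\lambda_2 .
\]
This makes all off-diagonal entries $\lambda_1-\epsilon-\lambda_j$ ($j\ge2$) of $B$ nonnegative, and also keeps $\lambda_1-\epsilon\ge\lambda_2\ge 0$. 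Here $\lambda_2\ge0$ holds whenever $n\ge 3$: if $\lambda_2<0$, the hypothesis would fail, since $(n-1)\lambda_n\le \lambda_2+(n-2)\lambda_n<\lambda_2<\lambda_2-\lambda_1$ fails to exceed it.

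The diagonal entries $\lambda_j$ of $B$, however, may be negative, so $B$ itself need not be nonnegative. This is the main obstacle, and it is where the hypothesis is really used. I would handle it by absorbing the negativity into the rank-one perturbation. $B$ is lower triangular with diagonal $\lambda_1-\epsilon,\lambda_2,\dots,\lambda_n$, so its spectrum is $\sigma_{-\epsilon}$, and $B\mathbf{e}=(\lambda_1-\epsilon)\mathbf{e}$. Brauer's computation needs only that $\mathbf{e}$ is an eigenvector for $\lambda_1-\epsilon$, not that $B\ge0$. So $A=B+\frac{\epsilon}{n}J=B+\mathbf{e}\,(\frac{\epsilon}{n}\mathbf{e})^T$ has spectrum $\{\lambda_1\}\cup\{\lambda_2,\dots,\lambda_n\}=\sigma$.

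It remains to check that $A>0$. The only entries of $B$ that can be negative are the diagonal ones $\lambda_j$ with $j\ge2$; every other entry is $\ge0$, and adding $\epsilon/n>0$ makes it positive. For a diagonal entry the condition is $\lambda_j+\epsilon/n>0$, and it suffices to check this for $j=n$. So I need an $\epsilon$ with
\[
-n\lambda_n<\epsilon\le\lambda_1-\lambda_2 .
\]
Such an $\epsilon$ exists, and can be taken positive, exactly when $-n\lambda_n<\lambda_1-\lambda_2$, i.e. $\lambda_n>\frac{1}{n}(\lambda_2-\lambda_1)$. This is implied by the stated hypothesis, since $\frac{1}{n-1}(\lambda_2-\lambda_1)<\frac1n(\lambda_2-\lambda_1)<0$ when $\lambda_2<\lambda_1$. (I would double-check whether the intended bookkeeping puts $\epsilon/n$ or another share on the diagonal, but either way the stated hypothesis suffices.)

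With this $\epsilon$, $A$ is entrywise positive with spectrum $\sigma$, so $\sigma$ is $\mathcal{R}_{>0}$. Since positive matrices are irreducible, $\sigma$ is also $\mathcal{IR}$. The case $n=2$ is covered by the same computation.
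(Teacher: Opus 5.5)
Your proof has two errors. The first is a reversed inequality that leaves part of the hypothesis range unproved. The second is a false claim that $\lambda_2\ge 0$, and in that case the statement as written is itself false.

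\textbf{The reversed inequality.} You need $\lambda_n>\frac{1}{n}(\lambda_2-\lambda_1)$, and you justify it from $\frac{1}{n-1}(\lambda_2-\lambda_1)<\frac{1}{n}(\lambda_2-\lambda_1)$. But exceeding the \emph{smaller} number does not make $\lambda_n$ exceed the larger one. Your condition is strictly stronger than the hypothesis, so the range $\frac{1}{n-1}(\lambda_2-\lambda_1)<\lambda_n\le\frac{1}{n}(\lambda_2-\lambda_1)$ is not covered. For $\sigma=\{3,1,-\frac{4}{5}\}$ the hypothesis holds ($-\frac{4}{5}>-1$), yet your window $\frac{12}{5}<\epsilon\le 2$ is empty.

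The loss comes from insisting on $\epsilon\le\lambda_1-\lambda_2$ so that the first column of $B$ is nonnegative. You already noticed that Brauer's shift does not need $B\ge 0$. Use that for the first column too, and demand only positivity of $A=B+\frac{\epsilon}{n}J$. Its entries are:
\begin{itemize}
\item $\lambda_1-\frac{n-1}{n}\epsilon$ in position $(1,1)$;
\item $\lambda_1-\lambda_j-\frac{n-1}{n}\epsilon$ in position $(j,1)$, for $j\ge 2$;
\item $\lambda_j+\frac{\epsilon}{n}$ in position $(j,j)$, for $j\ge 2$;
\item $\frac{\epsilon}{n}$ elsewhere.
\end{itemize}
The first-column and diagonal conditions become $\max\{0,-n\lambda_n\}<\epsilon<\frac{n}{n-1}(\lambda_1-\lambda_2)$. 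This window is nonempty exactly when $\lambda_n>\frac{1}{n-1}(\lambda_2-\lambda_1)$. That is where the factor $\frac{1}{n-1}$ in the statement comes from, and it is the bookkeeping behind the paper's construction. In the example, $\epsilon=\frac{27}{10}$ gives a positive matrix.

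\textbf{The claim that $\lambda_2\ge 0$.} Your deduction that $\lambda_2\ge 0$ for $n\ge 3$ is invalid. Its last link, $\lambda_2<\lambda_2-\lambda_1$, would need $\lambda_1<0$, and $\{10,-1,-1\}$ satisfies the hypothesis with $\lambda_2<0$. This matters: with the correct window, the $(1,1)$ entry $\lambda_1-\frac{n-1}{n}\epsilon$ is guaranteed positive by $\epsilon<\frac{n}{n-1}(\lambda_1-\lambda_2)$ only when $\lambda_2\ge 0$.

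For $\lambda_2<0$ the statement as written is in fact false:
\begin{itemize}
\item $\{2,-1,-1\}$ satisfies the hypothesis ($-1>-\frac{3}{2}$) but has trace $0$, so it is not $\mathcal{R}_{>0}$;
\item $\{\frac{3}{2},-1,-1\}$, or $\{1,-5\}$ when $n=2$, also satisfies it but has negative trace, so it is not even realizable.
\end{itemize}
So the result, like the paper's construction (which starts from the nonnegative case), tacitly requires $\lambda_2\ge 0$. More generally, $\lambda_n>\frac{1}{n-1}(\max\{\lambda_2,0\}-\lambda_1)$ suffices. You should impose this as an assumption rather than try to derive it.
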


We next turn to  Sule\v{\i}manova \cite{Su}  type spectra $\sigma$: $\lambda_1>0, \lambda_2, \dots,\lambda_n \leq 0$ and $\rm{Tr}(\sigma)=\lambda_1+\lambda_2+\cdots+\lambda_n \geq 0$. These are known to be realizable.

\begin{teo} \label{Teo9} \label{Sule}
If $\sigma$ is of  Sule\v{\i}manova type, $\lambda_1>0\geq \lambda_2 \geq \cdots \geq  \lambda_n$, and, either $\lambda_2<0$ or  $\rm{Tr}(\sigma)>0$, then $\sigma$ is $\mathcal{IR}$.
\end{teo}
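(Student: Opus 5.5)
Split into the two cases of the hypothesis. If $\mathrm{Tr}(\sigma)>0$, we first realize $\sigma_{-\epsilon}$ for small $\epsilon>0$ with $\epsilon\le \mathrm{Tr}(\sigma)$. This spectrum is still of Sule\v{\i}manova type, with Perron root $\lambda_1-\epsilon>0\ge\lambda_2$ and nonnegative trace. We want a realization of it with constant row sums; then Brauer's Theorem \ref{Brauer} with $\mathbf{x}=\mathbf{e}$, $\mathbf{y}=\frac{1}{n}\mathbf{e}$ gives the positive matrix $B+\frac{\epsilon}{n}J$ with spectrum $\sigma$. So $\sigma$ is in fact $\mathcal{R}_{>0}$, hence $\mathcal{IR}$. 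For $B$ we use the classical construction adapted from Section \ref{S5}:
$$
B=\left[\begin{array}{ccccc}
\lambda_1' + \sum_{j\ge2}\lambda_j & -\lambda_2 & \cdots & \cdots & -\lambda_n\\
\lambda_1'+\sum_{j\ge 2}\lambda_j & -\lambda_2+\lambda_2 & & & \vdots
\end{array}\right]
$$
A cleaner route is to take $B=\lambda_1'\,\mathbf{e}\mathbf{e}_1^T$-type rank-one-plus-diagonal data. Concretely, set $B=\mathbf{e}\,\mathbf{v}^T+\mathrm{diag}(0,\lambda_2,\dots,\lambda_n)$, where $\mathbf{v}=(\lambda_1'+\sum_{j\ge2}\lambda_j,-\lambda_2,\dots,-\lambda_n)^T\ge 0$. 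Its row sums all equal $\lambda_1'$. Its diagonal entries are $\lambda_1'+\sum_{j\ge 2}\lambda_j\ge0$ and $-\lambda_j+\lambda_j=0$, so $B\ge 0$. Its spectrum, computed from the triangular structure after subtracting the first column, is $\{\lambda_1',\lambda_2,\dots,\lambda_n\}$.

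If $\mathrm{Tr}(\sigma)=0$ but $\lambda_2<0$, a positive realization is impossible, so we aim directly for an irreducible one. Take the same $B$ with $\lambda_1'=\lambda_1$. The $(1,1)$ entry is now $0$, and row $i$ has the entry $-\lambda_j$ in column $j$. Vertex $1$ has arcs to every $j$ with $\lambda_j<0$, and every vertex has an arc to each such $j$. However, vertices with $\lambda_j=0$ receive no arcs, and vertex $1$ receives none either. So this $B$ is not irreducible, and it must be modified.

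My plan for the trace-$0$ case is to use the symmetric arrow-type construction from Section \ref{S4}, or the digraph viewpoint. First realize the nonzero part $\{\lambda_1,\lambda_2,\dots,\lambda_k\}$, with $\lambda_j<0$ for $2\le j\le k$, irreducibly. Then append the zeros using Corollary \ref{addzeros}, which preserves $\mathcal{IR}$. For the nonzero part with $k\ge2$, I would use the companion-like matrix whose characteristic polynomial is $\prod(x-\lambda_j)$. Since $\lambda_1+\sum_{j\ge2}\lambda_j=0$ and the remaining roots are all nonpositive, the polynomial is $x^k+0\cdot x^{k-1}+c_2x^{k-2}+\cdots$, and Sule\v{\i}manova-type sign conditions give $c_i\le 0$ for $i\ge2$. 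Indeed, $\prod_{j\ge2}(x+\mu_j)$ with $\mu_j>0$ has positive coefficients, and multiplying by $(x-\sum\mu_j)$ yields nonpositive lower coefficients (standard Newton-type argument). The companion matrix with last row $(-c_k,\dots,-c_2,0)$ is then nonnegative. It is irreducible because the $k$-cycle coefficient $c_k=\lambda_1\prod_{j\ge2}(-\lambda_j)(-1)^{\dots}$ is nonzero (all roots are nonzero), so the arc from the last vertex back to the first is present and the cycle $1\to2\to\cdots\to k\to1$ gives strong connectivity.

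The main obstacle is verifying the sign claim $c_i\le0$ for all $i\ge 2$ and $c_k\ne0$. I would prove it by writing $c_i=e_i(\mu)-\big(\sum\mu_j\big)e_{i-1}(\mu)$, up to sign conventions, and showing $e_i(\mu)\le e_1(\mu)e_{i-1}(\mu)$, which is clear by expanding the product. For $i=k$ this inequality is strict, since $e_k(\mu)=0$ for the $(k-1)$-tuple $\mu$ while $e_1e_{k-1}>0$, which gives $c_k\ne0$.
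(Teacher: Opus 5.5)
There is a genuine error in your positive-trace case, and it also misleads your trace-zero case. For $B=\mathbf{e}\mathbf{v}^T+\mathrm{diag}(0,\lambda_2,\dots,\lambda_n)$ one has $B\mathbf{e}=\lambda_1'\mathbf{e}+(0,\lambda_2,\dots,\lambda_n)^T$. So row $i$ sums to $\lambda_1'+\lambda_i$, $\mathbf{e}$ is not an eigenvector, and the spectrum is not $\{\lambda_1',\lambda_2,\dots,\lambda_n\}$. Already for $n=2$, with $\lambda_1'=a$ and $\lambda_2=-b<0$, you get $B=\left[\begin{array}{cc} a-b & b\\ a-b & 0\end{array}\right]$, whose determinant is $-b(a-b)\neq -ab$. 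The Brauer step therefore has nothing to act on.

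What is missing is a correction in the first column: the $(i,1)$ entry must be $v_1-\lambda_i$, i.e. $B=\mathbf{e}\mathbf{v}^T+\mathrm{diag}(0,\lambda_2,\dots,\lambda_n)-(0,\lambda_2,\dots,\lambda_n)^T\mathbf{e}_1^T$. This is Perfect's matrix from the paper with $s_1$ replaced by $s_1-\epsilon$. Now $B-\mathbf{e}\mathbf{v}^T$ is lower triangular with diagonal $(0,\lambda_2,\dots,\lambda_n)$ and zero row sums. The rank-one update then gives spectrum $\{\lambda_1',\lambda_2,\dots,\lambda_n\}$ with eigenvector $\mathbf{e}$. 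The entries $v_1-\lambda_i\ge 0$ keep $B$ nonnegative, and $B+\frac{\epsilon}{n}J>0$ realizes $\sigma$. With this repair your first case is correct. It even gives $\mathcal{R}_{>0}$ uniformly for every positive-trace Sule\v{\i}manova spectrum, whereas the paper quotes Paparella's explicit matrix for $\lambda_2=0$.

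The same slip is what made your trace-zero case look delicate. Since $\lambda_2\ge\lambda_3\ge\cdots\ge\lambda_n$, the hypothesis $\lambda_2<0$ forces every $\lambda_j$ with $j\ge 2$ to be negative. So there are no zero eigenvalues, and the worry about vertices with $\lambda_j=0$ and the appeal to appending zeros are vacuous. Moreover, the corrected matrix with $s_1=0$ has $(i,1)$ entry $-\lambda_i>0$ for $i\ge 2$, and entry $-\lambda_j>0$ off the diagonal in each column $j\ge 2$. All its off-diagonal entries are positive, so it is irreducible at once; this is exactly the paper's proof.

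Your companion-matrix argument is nevertheless valid. You have $c_i=e_i(\mu)-e_1(\mu)e_{i-1}(\mu)\le 0$ because $e_1e_{i-1}\ge i\,e_i$, and $c_k=-e_1(\mu)e_{k-1}(\mu)<0$ supplies the closing arc $(k,1)$ of a Hamiltonian cycle. It is a legitimate alternative route, and it works for any Sule\v{\i}manova spectrum with no zero eigenvalue, since $\lambda_1\ge e_1(\mu)$ only makes the $c_i$ more negative. Once the first-column correction is in place, though, it does more work than needed.
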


\begin{proof} Let $s_1=\rm{Tr}(\sigma)$.

 When $\lambda_2<0$ the realization of $\sigma$ given by Perfect in \cite{Pe3} 
 $$
 \left[\begin{array}{ccccc}
s_1& -\lambda_2& -\lambda_3& \cdots  &-\lambda_n\\
s_1-\lambda_2 &0&-\lambda_3&  \cdots &-\lambda_n\\\
\vdots& -\lambda_2 & \ddots & \ddots & \vdots\\
\vdots& \vdots & \ddots & \ddots & -\lambda_n\\\
s_1-\lambda_n & -\lambda_2& \cdots &-\lambda_{n-1} & 0
\end{array}
\right]
 $$
 is irreducible.
 
When  $\lambda_2=0$ and $s_1>0$ the realization of $\sigma$ given by Paparella in \cite{Pa} 
 $$
 \left[\begin{array}{ccccc}
\frac{s_1}{n}& \frac{s_1}{n}-\lambda_2& \frac{s_1}{n}-\lambda_3& \cdots  &\frac{s_1}{n}-\lambda_n\\
\frac{s_1}{n}-\lambda_2 &\frac{s_1}{n}&\frac{s_1}{n}-\lambda_3&  \cdots &\frac{s_1}{n}-\lambda_n\\
\vdots& \frac{s_1}{n}-\lambda_2 & \ddots & \ddots & \vdots\\
\vdots& \vdots & \ddots & \ddots & \frac{s_1}{n}-\lambda_n\\\
\frac{s_1}{n}-\lambda_n & \frac{s_1}{n}-\lambda_2& \cdots & \frac{s_1}{n}-\lambda_{n-1} & \frac{s_1}{n}
\end{array}
\right]
 $$
 is irreducible.
\end{proof}
 
 Now, we may show that any Sule\v{\i}manova  spectrum is $\mathcal{IR}$.
 
 \begin{co}
 Any  Sule\v{\i}manova  spectrum is $\mathcal{IR}$.
 \end{co}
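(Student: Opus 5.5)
The plan is to reduce to Theorem \ref{Sule}, which already covers every Sule\v{\i}manova spectrum except one degenerate family. A Sule\v{\i}manova spectrum has the form $\lambda_1>0\geq\lambda_2\geq\cdots\geq\lambda_n$ with $\mathrm{Tr}(\sigma)\geq 0$. If $\lambda_2<0$, or if $\mathrm{Tr}(\sigma)>0$, Theorem \ref{Sule} applies directly. The only remaining case is $\lambda_2=0$ together with $\mathrm{Tr}(\sigma)=0$. But then $0=\lambda_1+\lambda_2+\cdots+\lambda_n\geq\lambda_1>0$, since $\lambda_2=0$ and the other $\lambda_j$ are nonpositive. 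This contradiction shows the case is empty.

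Strictly, when $\lambda_2=0$ we have $\mathrm{Tr}(\sigma)=\lambda_1+\lambda_3+\cdots+\lambda_n$. This could be zero if $\lambda_3,\dots,\lambda_n$ are negative, contrary to the naive estimate above. So I order the spectrum as $\lambda_2\geq\lambda_3\geq\cdots$. Then $\lambda_2=0$ means the largest non-Perron eigenvalue is $0$, while the others may still be negative. The genuinely remaining case is therefore $\sigma=\{\lambda_1,0,\dots,0,\lambda_{k+1},\dots,\lambda_n\}$ with $\lambda_j<0$ for $j>k$, some number $k-1\geq 1$ of zeros, and trace exactly $0$.

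To handle it, remove all the zeros. The resulting spectrum $\tau=\{\lambda_1,\lambda_{k+1},\dots,\lambda_n\}$ is again of Sule\v{\i}manova type, with trace $0$, and its second element is negative. There are two subcases.

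\emph{Case $n>k$.} Theorem \ref{Sule}, with the Perfect realization, gives that $\tau$ is $\mathcal{IR}$. Applying Corollary \ref{addzeros} repeatedly appends the zeros one at a time while preserving $\mathcal{IR}$, and recovers $\sigma$.

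\emph{Case $n=k$.} Here every non-Perron eigenvalue is zero, so the trace is $\lambda_1>0$, contradicting trace $0$. So this subcase cannot occur.

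The main point to verify is that the Perfect realization of $\tau$ really is irreducible when its trace is $0$. In that situation its $(1,1)$ entry $s_1$ vanishes, but the off-diagonal entries $-\lambda_j$ are all positive. The matrix therefore has positive off-diagonal entries, hence is irreducible as long as $\tau$ has at least two elements, which holds because $\tau$ contains $\lambda_1$ and at least one negative value. The case $n=1$ is trivially $\mathcal{IR}$.
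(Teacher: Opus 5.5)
Your argument is correct and is essentially the paper's proof: realize the zero-free part by Theorem \ref{Sule} (via the Perfect realization, since the remaining non-Perron eigenvalues are all negative), then restore the zeros one at a time with Corollary \ref{addzeros}. The only differences are that you strip zeros only in the one case Theorem \ref{Sule} misses ($\lambda_2=0$, $\mathrm{Tr}(\sigma)=0$), whereas the paper strips them whenever $0\in\sigma$, and that you should delete the first paragraph's ``contradiction'', because $\lambda_1+\cdots+\lambda_n\geq\lambda_1$ goes the wrong way, as you yourself note afterwards.
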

 
 \begin{proof}
 If there is no  eigenvalue 0, apply Theorem \ref{Sule}. If not, use Theorem \ref{Sule}  to realize the spectrum without zeros and then apply Corollary \ref{addzeros}. \end{proof}
 
 Now, we may also append some positive eigenvalues to any Sule\v{\i}manova  spectrum.
 
 \begin{co}\label{co16}
 Let $\sigma=\{\lambda_1, \dots, \lambda_n\}$ with $\lambda_1>\lambda_2\geq \cdots \geq \lambda_t>0\geq \lambda_{t+1} \geq \cdots \geq \lambda_n$ and $\alpha=\lambda_1+\lambda_{t+1}+\cdots+\lambda_n$. If $\alpha>0$ then $\sigma$ is $\mathcal{R}_{>0}$.
 \end{co}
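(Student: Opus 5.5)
The plan is to build a nonnegative realization in which the Perron root is lowered slightly and has a positive eigenvector, and then restore it with Brauer's perturbation (Theorem \ref{Brauer}), which yields a positive matrix. We split $\sigma$ into a Sule\v{\i}manova part $\tau=\{\alpha,\lambda_{t+1},\dots,\lambda_n\}$ and the positive eigenvalues $\lambda_2,\dots,\lambda_t$. Since $\alpha>0$, we may pick $\epsilon>0$ small enough that $\alpha-\epsilon>0$ and $\lambda_1-\epsilon>\lambda_2$.

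First, realize $\tau_{-\epsilon}=\{\alpha-\epsilon,\lambda_{t+1},\dots,\lambda_n\}$. This is a Sule\v{\i}manova spectrum with trace $\alpha-\epsilon>0$. Use Paparella's symmetric realization from the proof of Theorem \ref{Sule}, or Perfect's realization; this step should not require any zeros to be removed. That realization $C$ has constant row sums $\alpha-\epsilon$. Its entries are $\frac{s_1}{n'}$ and $\frac{s_1}{n'}-\lambda_j\geq0$, so $C\geq0$, and $C\mathbf{e}=(\alpha-\epsilon)\mathbf{e}$.

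Second, adjoin the positive eigenvalues using a block lower triangular matrix in the spirit of the matrix $B$ in Section \ref{S5}:
$$
B=\left[\begin{array}{cc} C & 0\\ W & \mathrm{diag}(\lambda_2,\dots,\lambda_t)\end{array}\right].
$$
The aim is for $B$ to have constant row sums $\lambda_1-\epsilon$. The rows of $C$ sum to $\alpha-\epsilon$, which is the wrong value, so we also add $(\lambda_1-\alpha)/(n-t+1)$ times $J$ to the $C$ block. Since $\mathbf{e}$ is the Perron vector of $C$ and $C$ has constant row sums, this changes only the Perron root of $C$, by Brauer's rule. The resulting block $C'=C+\frac{\lambda_1-\alpha}{n-t+1}J$ is nonnegative because $\lambda_1\geq\alpha$, has spectrum $\{\lambda_1-\epsilon,\lambda_{t+1},\dots,\lambda_n\}$, and has row sums $\lambda_1-\epsilon$. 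For $W$, take each row $i$ to have first entry $\lambda_1-\epsilon-\lambda_i>0$ and zeros elsewhere. Then $B\geq 0$ has row sums $\lambda_1-\epsilon$, and its spectrum is $\{\lambda_1-\epsilon,\lambda_2,\dots,\lambda_n\}$ because it is block triangular.

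Finally, set $A=B+\frac{\epsilon}{n}J$. By Theorem \ref{Brauer} with $\mathbf{x}=\mathbf{e}$ and $\mathbf{y}=\frac{1}{n}\mathbf{e}$, the matrix $A$ has spectrum $\sigma$ and is entrywise positive, so $\sigma$ is $\mathcal{R}_{>0}$.

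The main point to check is the choice of $\epsilon$. We need $\alpha-\epsilon>0$ so that $C$ exists with Perron root $\alpha-\epsilon$, and we need $\lambda_1-\epsilon\geq\lambda_i$ so that $W\geq 0$. Both conditions hold for small $\epsilon$ because $\alpha>0$ and $\lambda_1>\lambda_2$. One degenerate case needs a separate word: if $t=n$, then $\tau=\{\alpha\}$ is a $1\times1$ block and the construction reduces to Theorem \ref{Teo8}.
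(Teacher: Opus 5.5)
There is a genuine gap in your first step: you split off the wrong Sule\v{\i}manova piece. The list $\tau_{-\epsilon}=\{\alpha-\epsilon,\lambda_{t+1},\dots,\lambda_n\}$ has trace $\alpha-\epsilon+\sum_{j>t}\lambda_j=\lambda_1-\epsilon+2\sum_{j>t}\lambda_j$, not $\alpha-\epsilon$. The number $\alpha-\epsilon$ is the trace of $\{\lambda_1-\epsilon,\lambda_{t+1},\dots,\lambda_n\}$.

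Under the sole hypothesis $\alpha>0$, the list $\tau_{-\epsilon}$ can violate both the trace and the Perron conditions, so the block $C$ need not exist. For example, $\sigma=\{3,1,-2\}$ satisfies the hypotheses with $t=2$ and $\alpha=1$. Yet $\tau_{-\epsilon}=\{1-\epsilon,-2\}$ is not realizable. Paparella's formula would give diagonal entries $s_1/n'=(-1-\epsilon)/2<0$. Your construction needs $\alpha-\epsilon\geq-\sum_{j>t}\lambda_j$, i.e. $\lambda_1+2\sum_{j>t}\lambda_j\geq\epsilon$, which is strictly stronger than $\alpha>0$. Positivity of the Perron root $\alpha-\epsilon$ alone is not enough.

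The repair is easy and removes the detour. Realize $\{\lambda_1-\epsilon,\lambda_{t+1},\dots,\lambda_n\}$ directly. For $0<\epsilon<\min\{\alpha,\lambda_1-\lambda_2\}$ this is a Sule\v{\i}manova list with trace exactly $\alpha-\epsilon>0$. So Perfect's or Paparella's matrix gives a nonnegative block with constant row sums $\lambda_1-\epsilon$, and the intermediate shift by $\frac{\lambda_1-\alpha}{n-t+1}J$ becomes unnecessary.

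With that block in place of $C'$, the rest of your argument is exactly the paper's proof: the rows with $\lambda_1-\epsilon-\lambda_i$ in the first column and $\lambda_i$ on the diagonal, followed by $A=B+\frac{\epsilon}{n}J$. The paper uses Perfect's realization, whose $(1,1)$ entry is $\alpha-\epsilon$.
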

 
 \begin{proof} Let $\epsilon \in \left(0, \min\left\{\frac{n(\alpha-\lambda_{t+1})}{n-1}, \frac{n(\lambda_1-\lambda_{2})}{n-1}\right\}\right)$. The matrix 
 $$
B= \left[\begin{array}{ccccc|cccc}
\alpha-\epsilon& -\lambda_{t+1}& \cdots& \cdots  &-\lambda_n & & &\\
\alpha-\epsilon-\lambda_{t+1} &0 & -\lambda_{t+2}&  \cdots &-\lambda_n & & &\\
\vdots&  -\lambda_{t+1} & \ddots & \ddots & \vdots & & &\\
\vdots& \vdots & \ddots & \ddots & -\lambda_n & & &\\
\alpha-\epsilon-\lambda_n &  -\lambda_{t+1} & \cdots &  -\lambda_{n-1} &0 & & &\\
\hline 
\lambda_1-\epsilon-\lambda_2 & 0 & \cdots & \cdots & 0 & \lambda_2 & &&\\
\lambda_1-\epsilon-\lambda_3 & \vdots & \cdots & \cdots & \vdots & &\ddots   &&\\
\vdots &  \vdots  & \cdots & \cdots & \vdots & & &\ddots &\\
\lambda_1-\epsilon-\lambda_t  &  0  & \cdots & \cdots & 0& & && \lambda_t\\
\end{array}
\right]\geq 0 
$$
has constant row sums and spectrum $\sigma_\epsilon$. We can apply Brauer, Theorem \ref{Brauer}, to obtain the positive matrix $A=B+\frac{\epsilon}{n}J$ with spectrum $\sigma$.
  \end{proof}

 With $\alpha=0$, in general, we lose irreducibility, see Theorem \ref{Teo1} and Example \ref{ejem3}.

 \section{Positive Realizability vs $\mathcal{IR}$}\label{S6}
 
Of course any  $\mathcal{R}_{>0}$ spectrum is $\mathcal{IR}$, but not every $\mathcal{IR}$ spectrum $\sigma$ is $\mathcal{R}_{>0}$. But the latter is ``close'' to true. Since $\mathcal{IR}$ implies a positive (right) eigenvector, the Brauer technique may be used to show the following.
 
 \begin{teo}
 If a spectrum $\sigma$ is $\mathcal{IR}$, then $\sigma_\epsilon$ is $\mathcal{R}_{>0}$ for any $\epsilon>0$.
 \end{teo}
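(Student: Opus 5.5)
Let $A$ be an irreducible nonnegative realization of $\sigma$, with Perron root $\rho=\rho(\sigma)$. The plan is to perturb $A$ by a positive rank-one matrix built from a Perron eigenvector, as in Brauer's Theorem \ref{Brauer}, so that only $\rho$ moves.

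First, since $A$ is irreducible, Perron--Frobenius theory \cite{F} gives that $\rho$ is a simple eigenvalue with an entrywise positive right eigenvector $\mathbf{x}$. Normalize it, say $\mathbf{e}^T\mathbf{x}=1$. Alternatively, following the remark in Section \ref{S2}, replace $A$ by $D^{-1}AD$ with $D=\mathrm{diag}(x_1,\dots,x_n)$. This gives a cospectral irreducible nonnegative matrix with constant row sums $\rho$, so we may take $\mathbf{x}=\mathbf{e}/n$ or simply $\mathbf{e}$.

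Second, given $\epsilon>0$, choose any positive vector $\mathbf{y}$ with $\mathbf{y}^T\mathbf{x}=\epsilon$. For instance, with $\mathbf{x}=\mathbf{e}$ take $\mathbf{y}=\frac{\epsilon}{n}\mathbf{e}$, and set
$$A'=A+\mathbf{x}\mathbf{y}^T=A+\frac{\epsilon}{n}J.$$
By Theorem \ref{Brauer}, $A'$ has spectrum $\{\rho+\epsilon\}\cup(\sigma-\{\rho\})$, which is exactly $\sigma_\epsilon$. Because $\rho$ is simple, the removal of $\rho$ from $\sigma$ is unambiguous. Every entry of $\mathbf{x}\mathbf{y}^T$ is strictly positive and $A\geq 0$, so $A'>0$. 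Hence $\sigma_\epsilon$ is $\mathcal{R}_{>0}$.

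The only point needing care is the Brauer step itself: it requires $\mathbf{x}$ to be an eigenvector for $\rho$, and the other eigenvalues must be unchanged with multiplicities. This is the biorthogonality argument, already packaged in Theorem \ref{Brauer}, so no real obstacle remains. The essential input is the positivity of $\mathbf{x}$, which is exactly where irreducibility is used.
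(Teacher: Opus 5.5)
Your proof is correct and is the paper's argument: irreducibility gives an entrywise positive right Perron vector $\mathbf{x}$, and the Brauer perturbation $A+\mathbf{x}\mathbf{y}^T$ with $\mathbf{y}>0$ and $\mathbf{y}^T\mathbf{x}=\epsilon$ is a positive matrix with spectrum $\sigma_\epsilon$. Normalizing to constant row sums so that the perturbed matrix is $A+\frac{\epsilon}{n}J$ is only a convenience, the same device the paper uses in Section~\ref{S5}.
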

 
 \begin{co}\label{Corollary11}
 If $\sigma$ is a spectrum and there exists an $\epsilon>0$ such that $\sigma_{-\epsilon}$ is $\mathcal{R}$, then $\sigma$ is $\mathcal{R}_{>0}$ and thus $\mathcal{IR}$.
 \end{co}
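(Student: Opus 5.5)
The plan is to reduce to the previous theorem (equivalently, to Brauer's Theorem \ref{Brauer}) by applying two rank-one Brauer perturbations, each raising the Perron root by part of $\epsilon$. The first perturbation produces a matrix with a \emph{positive} Perron eigenvector, and the second then makes every entry positive. The difficulty is that a realization $A$ of $\sigma_{-\epsilon}$ may be reducible, so its Perron vector need not be positive. We therefore cannot apply the Brauer corollary to $A$ directly.

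Let $A\ge 0$ realize $\sigma_{-\epsilon}$. Set $\rho=\rho(\sigma)$, so $\rho(A)=\rho-\epsilon$. Let $\mathbf{x}\ge 0$ be a right Perron vector of $A$, normalized so that $\mathbf{e}^T\mathbf{x}=1$. Fix $0<t<\epsilon$ and put $B=A+t\,\mathbf{x}\mathbf{e}^T\ge 0$. By Theorem \ref{Brauer}, $B$ has spectrum $\{\rho-\epsilon+t\}\cup(\sigma-\{\rho\})$, which is $\sigma_{-(\epsilon-t)}$. Every other eigenvalue has modulus at most $\rho-\epsilon<\rho':=\rho-\epsilon+t$. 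Hence $\rho'$ is the Perron root of $B$ and is an algebraically simple eigenvalue. Also $B\mathbf{x}=\rho'\mathbf{x}$.

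The key step is to show that the left Perron vector of $B$ is positive. Let $\mathbf{u}\ge 0$, $\mathbf{u}\neq 0$, satisfy $\mathbf{u}^TB=\rho'\mathbf{u}^T$; it exists by Perron--Frobenius and is unique up to scaling by simplicity. Since $\rho'$ is a simple eigenvalue, its left and right eigenvectors are not orthogonal, so $\mathbf{u}^T\mathbf{x}\neq 0$. Both vectors are nonnegative, so $\mathbf{u}^T\mathbf{x}>0$. Then
$$\rho'\mathbf{u}^T=\mathbf{u}^TA+t(\mathbf{u}^T\mathbf{x})\mathbf{e}^T\ \ge\ t(\mathbf{u}^T\mathbf{x})\mathbf{e}^T,$$
and the right-hand side has all entries positive. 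As $\rho'>0$, this gives $\mathbf{u}>0$. I expect this simplicity and biorthogonality argument to be the only delicate point.

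Finally, apply Theorem \ref{Brauer} to $B^T$, whose Perron vector $\mathbf{u}$ is positive. Normalize $\mathbf{e}^T\mathbf{u}=1$ and take $C=B^T+(\epsilon-t)\,\mathbf{u}\mathbf{e}^T$. Then $C$ is entrywise positive, because $\mathbf{u}>0$ and $\epsilon-t>0$. Its spectrum is $\sigma(B)$ with the Perron root raised by $\epsilon-t$, which is exactly $\sigma$. Then $C^T>0$ realizes $\sigma$ as well, so $\sigma$ is $\mathcal{R}_{>0}$ and hence $\mathcal{IR}$.
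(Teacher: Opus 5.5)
Your proof is correct, but it follows a different route from the one the paper relies on. The paper gives no separate argument for this corollary. The derivation it presumably intends, in line with Section~\ref{S2} and the proofs of Theorem~\ref{Teo8} and Corollary~\ref{co16}, goes as follows. Take a realization $B$ of $\sigma_{-\epsilon}$ with constant row sums; this is the co-spectrality result of \cite{J}, which for reducible realizations is not a mere diagonal similarity. Then $\mathbf{e}>0$ is a right Perron vector, and a single Brauer step $B+\frac{\epsilon}{n}J$ is already positive with spectrum $\sigma$.

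You avoid that cited theorem entirely. You split $\epsilon=t+(\epsilon-t)$, and the first Brauer step makes the Perron root strictly dominant. Your argument then forces the left Perron vector of $B$ to be strictly positive, and the second step applied to $B^T$ yields an entrywise positive matrix. So the paper's route is a one-liner given \cite{J}. Yours is self-contained, using only Perron--Frobenius and Theorem~\ref{Brauer}, and it starts from an arbitrary realization of $\sigma_{-\epsilon}$.

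Your key step can be shortened. If $\mathbf{u}^T\mathbf{x}=0$, then $\mathbf{u}^TA=\rho'\mathbf{u}^T$ with $\rho'>\rho(A)$, which is impossible. So you do not need to appeal to simplicity and biorthogonality.

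One caveat: your line ``so $\rho(A)=\rho-\epsilon$'' does not follow from the hypothesis as literally stated. The spectrum $\sigma_{-\epsilon}$ lowers only one entry of $\sigma$, and another element may become its Perron root. For example, with $\sigma=\{1,1\}$ and $\epsilon=\frac12$, $\sigma_{-\epsilon}=\{\frac12,1\}$ is realizable, yet $\sigma$ is not $\mathcal{R}_{>0}$. The corollary must therefore be read with $\rho(\sigma_{-\epsilon})=\rho(\sigma)-\epsilon$, which is how the paper uses the notation in Section~\ref{S5}. The paper's route needs this just as yours does, so state it explicitly as part of the hypothesis.
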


\section{Negation Invariant Spectra}\label{S7} 

We say that a spectrum $\sigma$ is {\it negation invariant} if its element-wise negation, $-\sigma$, is the same as $\sigma$. For example, $\{a,-a\}$ (as mentioned earlier), $\{a,0,-a\}$, $\{a,b,-b,-a\}$ with $a,b>0$, or $\{a,\pm bi, -a\}$, etc.

We have noted earlier that any nonnegative $\sigma$ with $\rho(\sigma)>0$ of multiplicity 1 is 
$\mathcal{R}_{>0}$. So, the element-wise square $\sigma^2$ or nonnegative square root $\sigma^{1/2}$ is as well. If $B$ is a positive realization of $\sigma^{1/2}$, then either
$$
\left[\begin{array}{cc}
0 & B\\
B&0
\end{array}
\right]\qquad \mbox{or} \qquad \left[\begin{array}{cc}
0 & B^2\\
I&0
\end{array}
\right]
$$
is an irreducible realization of the negation invariant spectrum $\sigma \cup -\sigma$. For negation invariant $\sigma$ containing pure imaginary conjugate pairs, a similar construction may be carried out. For example, for $\sigma=\{2,-2,\pm i\}$,
$$
A=\frac{1}{2}\left[\begin{array}{cc}
3& 5\\
5&3
\end{array}
\right]
$$
is a positive realization of $\{4,-1\}$, so that
$$
\left[\begin{array}{cc}
0 & A\\
I&0
\end{array}
\right]
$$
is an irreducible realization of $\sigma$.

\section{The $\mathcal{IR}$ problem for $n=2$}\label{S8}

Since when $n=2$ the Perron root $\rho(\sigma)$ must be nonnegative, the ``other''  eigenvalue must be real and no larger in absolute value, {\it i.e.}, $\{a,b\}$ with $a \geq 0$ and $b$ real with $|b| \leq a$. However the nonnegative doubly stochastic matrix
$$
\frac{1}{2}\left[\begin{array}{cc}
a+b & a-b \\
a-b & a+b
\end{array}
\right]
$$
realizes such a spectrum. It is irreducible unless $b=a$, in which case $\rho(\sigma)$ has multiplicity 2, which of course, we assume away as we are seeking irreducibility. Thus, for $n=2$ and $\rho(\sigma)>0$ simple, $\mathcal{IR}$ is equivalent to $\mathcal{R}$.

\section{The $\mathcal{IR}$ problem for $n=3$}\label{S9}

The case $n=3$ is much richer. Spectrum $\sigma$ may consist of 1) $\rho(\sigma)$ and a conjugate pair of complex (nonreal) eigenvalues, or 2) three real eigenvalues, one of which is $\rho(\sigma)$; the other two could be a) both nonnegative, b) one nonnegative and one negative, or c) both negative.

In case 1), any realization must be irreducible, as such a spectrum is not partitionable, by Theorem \ref{Teo7}.

This leaves the real case 2). Part a) is covered by Theorem \ref{Teo8} and there is always a positive realization. Part c) is covered by Theorem \ref{Teo9}. Part b), is mostly covered by Corollary \ref{co16}.

The remaining possibility for  part b), is when $\sigma=\{a,b,-a\}$, with $a>b\geq 0$. Then $b$ must be 0 in order to be $\mathcal{IR}$, as $\sigma$ is not SS and cannot have Frobenius structure for $b>0$, see Theorem \ref{Teo1}. Otherwise $\sigma$ is not $\mathcal{IR}$.

As a conclusion, for $n=3$ and $\rho(\sigma)>0$ simple, $\mathcal{IR}$ is equivalent to $\mathcal{R}$ but $\sigma=\{a,b,-a\}$ with $b>0$. In particular, if $\rm{Tr}(\sigma)=0$, then $\mathcal{IR}$ and $\mathcal{R}$ are equivalent (see Theorem \ref{n=3} later).

\section{The $\mathcal{IR}$ problem for $n=4$}\label{S10}

\underline{The case $n=4$, $\sigma$ non-real}

Suppose $\sigma=\{a,b,c\pm di\}$ with $a>b$, $a,d>0$ is $\mathcal{R}$. If $\sigma$ is not partitionable ($b<0$ or $b \geq 0$ and $\{a,c\pm di\}$ not $\mathcal{R}$),  then $\sigma$ is $\mathcal{IR}$. Suppose then $b\geq 0$ and  $\{a,c\pm di\}$  realizable ({\it i.e.}, $a+2c,a-c-\sqrt{3}d\geq 0$, see \cite{LoLo}). It can happen:
\begin{itemize}
\item $\{a,c\pm di\}$ is $\mathcal{R}$, and $a+2c, a-c-\sqrt{3}d>0$. The matrix with constant row sums
$$
\frac{1}{3}\left[\begin{array}{cccc}
a-\epsilon+2c &a-\epsilon -c+\sqrt{3}d & a-\epsilon -c-\sqrt{3}d&0\\
a-\epsilon -c-\sqrt{3}d & a-\epsilon+2c &a-\epsilon -c+\sqrt{3}d\ & 0\\
a-\epsilon -c+\sqrt{3}d &a-\epsilon -c-\sqrt{3}d &a-\epsilon+2c & 0\\
3(a-\epsilon)-3b &0 & 0 & 3b
\end{array}
\right]
$$
has spectrum $\{a-\epsilon, b, c\pm di\}$. The application of Brauer with  $\mathbf{x}= (1,1,1,1)$ and  $\mathbf{y}=\frac{\epsilon}{4}\mathbf{x}$ gives 
$$
\frac{1}{12}\left[\begin{array}{cccc}
4(a+2c)-\epsilon &4(a-c+\sqrt{3}d)-\epsilon & 4(a-c-\sqrt{3}d)-\epsilon &3\epsilon\\
4(a-c-\sqrt{3}d)-\epsilon & 4(a+2c)-\epsilon &4(a-c+\sqrt{3}d)-\epsilon & 3\epsilon\\
4(a-c+\sqrt{3}d)-\epsilon  &4(a-c-\sqrt{3}d)-\epsilon &4(a+2c)-\epsilon & 3\epsilon\\
12(a-b)-9\epsilon &3\epsilon & 3\epsilon & 12b+3\epsilon
\end{array}
\right]
$$
with spectrum $\sigma$, which is positive for $\epsilon \in (0, \min\{4(a+2c),4(a-c-\sqrt{3}d),\frac{4}{3}(a-b)\})$.

\item $\{a,c\pm di\}$ is $\mathcal{R}$ and $a+2c=a-c-\sqrt{3}d=0$: $\sigma=\{a,b,-\frac{a}{2}\pm \frac{\sqrt{3}a}{2}i\}$. In this case, $\{a,-\frac{a}{2}\pm \frac{\sqrt{3}a}{2}i\}$ are the 3-rd roots of unity by $a$. If $b>0$, then  $\sigma$ is not $\mathcal{IR}$  by Theorem \ref{Teo1}: $\sigma$ is $\mathcal{R}$, not SS and has not Frobenius structure. If $b=0$, then $\sigma$ is $\mathcal{IR}$  by Corollary \ref{addzeros}.

\item $\{a,c\pm di\}$ is $\mathcal{R}$, $(a+2c)(a-c-\sqrt{3}d)=0$,  and $b\leq \frac{a+2c}{3}$. Then
$$
\frac{1}{3}\left[\begin{array}{ccc}
a+2c &a -c+\sqrt{3}d & a -c-\sqrt{3}d\\
a -c-\sqrt{3}d & a+2c &a -c+\sqrt{3}d\\
a-c+\sqrt{3}d &a -c-\sqrt{3}d &a+2c
\end{array}
\right]\geq 0
$$
realizes $\{a,c\pm di\}$ irreducibly and by Theorem \ref{addx} we have that $\sigma$ is $\mathcal{IR}$.

 \item $\{a,c\pm di\}$ is $\mathcal{R}$, $a+2c=0$, $a-c-\sqrt{3}d>0$ and $b>0$: $\sigma=\{a,b,-\frac{a}{2}\pm di\}$.
 
 For $b\in \left(0,\sqrt{\frac{3}{4}a^2-d^2}\,\right)$ the matrix
 $$
 \left[
 \begin{array}{cccc}
 0 & 1 & 0 & 0\\
 b^2 & 0 & 1 & 0 \\
 \frac{1}{4} (a+b)((a-2b)^2+4d^2) & 0 & 0 & 1 \\
 0 & 0 & \frac{3}{4}a^2-b^2-d^2 & b
 \end{array}
 \right]
 $$
is nonnegative, irreducible and realizes $\sigma$.

For $(b,d)$ in the  region  $\{(b,d)\;  : \;b^2+d^2\geq \frac{3}{4}a^2\geq d^2  \;,\;3b^4-12a(a^2+4d^2)b+(3a^2-4d^2)^2\geq 0\}$ the matrix 
$$
\left[\begin{array}{cccc}
\displaystyle\frac{b}{4} & 1 & 0 & 0 \\
\vspace*{-.2cm}\\
\frac{1}{2}\left(\frac{3}{4}a^2-d^2\right)+\frac{3}{16}b^2 & \displaystyle\frac{b}{4} & 1 & 0 \\
\vspace*{-.2cm}\\
\displaystyle\frac{(2a+b)((a-b)^2+4d^2)}{8} & 0 & \displaystyle\frac{b}{4} & 1 \\
\vspace*{-.2cm}\\
\displaystyle\frac{3b^4-12a(a^2+4d^2)b+(3a^2-4d^2)^2}{64} & 0 & \frac{1}{2}\left(\frac{3}{4}a^2-d^2\right)+\frac{3}{16}b^2 & \displaystyle\frac{b}{4}
\end{array}
\right]
$$
is nonnegative, irreducible and realizes $\sigma$.
 
We do not know what happen for $(b,d)$ in the  region  
 $$
 \{(b,d)\;  : \;b^2+d^2\geq \frac{3}{4}a^2\geq d^2\;,\;3b^4-12a(a^2+4d^2)b+(3a^2-4d^2)^2< 0\}.
 $$
 \item  $\{a,c\pm di\}$ is $\mathcal{R}$,  $a+2c>0$, $a-c-\sqrt{3}d=0$ and $b > \frac{a+2c}{3}\,$: $\sigma=\{a,b,c \pm \frac{a-c}{\sqrt{3}} i\}$.
 
We do not know what happen, but we conjecture that $\sigma$ is not $\mathcal{IR}$. Note that $\rm{Tr}(\sigma)= a+b+2c>\frac{4}{3}(a+2c)$. Since $k_2=\frac{1}{3}(a+2c)(a+3b+2c)>0$, by \cite[Formula (89)]{TAAMP}, the maximal diagonal entry is $b$, in which case, we have proved that any realization of $\sigma$ has the other three diagonal entries equal to $\frac{1}{3}(a+2c)$. We have also proved that $\sigma$ is not $\mathcal{IR}$ neither with constant diagonal nor with maximal diagonal entry.

\end{itemize}

\underline{The case $n=4$, $\sigma$ real}

Suppose $\lambda_1>\lambda_2\geq \lambda_3\geq \lambda_4 \geq -\lambda_1$, with $\lambda_1>0$ and $\rm{Tr}(\sigma)\geq 0$. Many particular cases are $\mathcal{IR}$, using our general results, but not all. If $\lambda_4=-\lambda_1$, then $\sigma$ must have Frobenius structure in order to be $\mathcal{IR}$. This means $\lambda_3=-\lambda_2$. In this case, $\sigma$ is realizable by
$$
\left[\begin{array}{cc}
0 & \frac{1}{2}\left[\begin{array}{cc}
\lambda_1^2+\lambda_2^2 & \lambda_1^2-\lambda_2^2\\
\vspace*{-.3cm}\\
\lambda_1^2-\lambda_2^2 & \lambda_1^2+\lambda_2^2
\end{array}\right]\\
\vspace*{-.3cm}\\
I &0 
\end{array}
\right].
$$
Otherwise, if $\lambda_4=-\lambda_1$, $\sigma$ is not $\mathcal{IR}$. In \cite{JP}, it is shown that every real 4-spectum that is realizable is either realizable by
$$
A_1=\frac{1}{2}
\left[\begin{array}{cc}
\begin{array}{cc} 
\lambda_1+\lambda_4 & \lambda_1-\lambda_4 \\
\lambda_1-\lambda_4 & \lambda_1+\lambda_4 \end{array} & 0\\
0 & \begin{array}{cc} \lambda_2+\lambda_3 & \lambda_2-\lambda_3 \\
\lambda_2-\lambda_3 & \lambda_2+\lambda_3 \end{array} 
\end{array}\right]
$$
or by
$$
A_2=\frac{1}{4}\left[\begin{array}{cccc}
S & S_1 & S_2 & S_3\\
S_1 & S & S_3 & S_2 \\
S_2 & S_3 & S & S_1 \\
S_3 & S_2 & S_1 & S
\end{array}\right],
$$
in which $S=\lambda_1+\lambda_2+\lambda_3+\lambda_4$, $S_1=\lambda_1+\lambda_2-\lambda_3-\lambda_4$, $S_2=\lambda_1-\lambda_2-\lambda_3+\lambda_4$ and $S_3=\lambda_1-\lambda_2+\lambda_3-\lambda_4$. Not all entries of $A_2$ are nonnegative, but, when all are, $A_2$ is irreducible. So, what about $A_1$? Since, we may now assume that $\{\lambda_1,\lambda_2,\lambda_3,\lambda_4\}$ is SS, we may use Brauer to give a positive realization. Since Sule\v{\i}manova spectra are covered by $A_2$, we may assume $\lambda_2>0$. Then there is an $\epsilon >0$ such that $\lambda'_1=\lambda_1-\epsilon > \lambda_2, -\lambda_4$. Now, consider $A'_1$ with $\lambda_1$ replaced by $\lambda'_1$ in $A_1$. Since $\lambda'_1>\lambda_2$, we may add positive entries in the lower left block of $A'_1$ to achieve $A''_1\geq 0$ with right Perron vector $\mathbf{e}$ and spectrum $\{\lambda_1-\epsilon, \lambda_2, \lambda_3,\lambda_4\}$. Then $A''_1+\frac{\epsilon}{4}\,\mathbf{e}\mathbf{u}^T$, with $\mathbf{u}>0$ properly normalized, has spectrum $\{\lambda_1,\lambda_2, \lambda_3,\lambda_4\}$, which is then $\mathcal{R}_{>0}$ and thus $\mathcal{IR}$.

\begin{teo}
Every real 4-spectum that is realizable is either $\mathcal{IR}$, or not SS, without Frobenius structure.
\end{teo}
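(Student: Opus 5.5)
Let $\sigma=\{\lambda_1,\lambda_2,\lambda_3,\lambda_4\}$ be realizable, with $\lambda_1\geq\lambda_2\geq\lambda_3\geq\lambda_4$. Then $\lambda_1=\rho(\sigma)\geq|\lambda_4|$. The plan is a case split according to whether $\sigma$ is SS.

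\emph{Case 1: $\sigma$ is not SS.} Since $\sigma$ is real, a tie on the spectral circle means either $\lambda_2=\lambda_1$ or $\lambda_4=-\lambda_1$. If $\lambda_2=\lambda_1$, the Perron root has multiplicity greater than one, so $\sigma$ is not $\mathcal{IR}$. Frobenius structure would need $h>1$ distinct points $\lambda_1 e^{2\pi i k/h}$, which is impossible with a repeated $\lambda_1$, so $\sigma$ lies in the second alternative.

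If instead $\lambda_4=-\lambda_1$ with $\lambda_2<\lambda_1$, then $h=2$. For real spectra, Frobenius structure means exactly that $\sigma$ is negation invariant, i.e.\ $\lambda_3=-\lambda_2$. When this holds, the block matrix $\left[\begin{smallmatrix}0&B\\ I&0\end{smallmatrix}\right]$ displayed above, with $B$ the positive (or, if $\lambda_2=0$, rank-one positive) $2\times 2$ realization of $\{\lambda_1^2,\lambda_2^2\}$, is irreducible. To see this, note that the digraph consists of arcs $3\to1$ and $4\to2$ together with all arcs from $\{1,2\}$ to $\{3,4\}$, because $B>0$ whenever $\lambda_1>|\lambda_2|$. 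Hence $\sigma$ is $\mathcal{IR}$. When $\lambda_3\neq-\lambda_2$, Theorem~\ref{Teo1} shows $\sigma$ is not $\mathcal{IR}$, and $\sigma$ lacks Frobenius structure. So in Case 1 every $\sigma$ is either $\mathcal{IR}$ or not SS without Frobenius structure.

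\emph{Case 2: $\sigma$ is SS,} i.e.\ $\lambda_1>\lambda_2$ and $\lambda_1>-\lambda_4$. We show $\sigma$ is $\mathcal{R}_{>0}$. By \cite{JP}, $\sigma$ is realized by $A_1$ or by $A_2$.

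If $A_2\geq0$, each off-diagonal entry is a quarter of some $S_j$. We have $S_1\geq S_3\geq S_2$, and $S_2=(\lambda_1+\lambda_4)-(\lambda_2+\lambda_3)$. If all $S_j$ vanished, we would get $\lambda_1=-\lambda_4$, which is excluded. Since $S_1+S_2+S_3=3\lambda_1-\lambda_2-\lambda_3-\lambda_4>0$, at least $S_1>0$. The positive $S_1$ entries alone already make $A_2$ irreducible, because they pair $1\leftrightarrow2$ and $3\leftrightarrow4$. We still need $S_2$ or $S_3$ positive to connect the two pairs. Here $S_3=(\lambda_1-\lambda_4)-(\lambda_2-\lambda_3)$, and $S_3=0$ would force $\lambda_1-\lambda_4=\lambda_2-\lambda_3$, which together with $\lambda_1>\lambda_2$ gives $\lambda_4>\lambda_3$, a contradiction. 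So $S_3>0$ and $A_2$ is irreducible.

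If only $A_1\geq0$ is available, first treat the Sule\v{\i}manova case $\lambda_2\leq0$ by Theorem~\ref{Teo9} and Corollary~\ref{addzeros}, so assume $\lambda_2>0$. Since $\lambda_1>\max(\lambda_2,-\lambda_4)$, choose $\epsilon>0$ with $\lambda_1'=\lambda_1-\epsilon>\max(\lambda_2,-\lambda_4)$. Form $A_1'$ from $A_1$ by replacing $\lambda_1$ with $\lambda_1'$; its upper block is still nonnegative and has row sums $\lambda_1'$. Then add $(\lambda_1'-\lambda_2)/2$ to each entry of the lower-left $2\times2$ block. Block triangularity keeps the spectrum $\{\lambda_1',\lambda_2,\lambda_3,\lambda_4\}$, and all row sums become $\lambda_1'$, so $\mathbf{e}$ is a Perron vector. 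Finally apply Theorem~\ref{Brauer} with $\mathbf{x}=\mathbf{e}$ and $\mathbf{y}=(\epsilon/4)\mathbf{e}$. The result is positive with spectrum $\sigma$, hence $\mathcal{IR}$.

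The main obstacle is the exhaustiveness of the \cite{JP} dichotomy together with confirming irreducibility of $A_2$ in degenerate subcases where some $S_j=0$. The SS hypothesis is exactly what keeps $S_3>0$, and Case 2 can always fall back on the Brauer perturbation.
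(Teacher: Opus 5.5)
Your proof is correct and essentially matches the paper's argument. The non-SS negation-invariant case is handled by the block matrix with $B>0$ realizing $\{\lambda_1^2,\lambda_2^2\}$, Theorem~\ref{Teo1} disposes of the other non-SS spectra, and in the SS case you use the \cite{JP} dichotomy: $A_2$ is irreducible whenever it is nonnegative, and $A_1$ is handled by lowering the Perron root, filling the lower-left block, and applying Theorem~\ref{Brauer}.

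Two wording slips should be fixed. First, the $S_1$ entries alone do not make $A_2$ irreducible, since they only give two disjoint $2$-cycles; irreducibility comes from $S_1>0$ together with $S_3=(\lambda_1-\lambda_2)+(\lambda_3-\lambda_4)>0$, which needs only $\lambda_1>\lambda_2$. Second, Case 2 yields $\mathcal{R}_{>0}$ only in the $A_1$ branch: trace-zero SS spectra such as $\{3,-1,-1,-1\}$ are $\mathcal{IR}$ but have no positive realization, so your opening claim there should be weakened to $\mathcal{IR}$.
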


This is consistent with the main conjecture.

\section{The $\mathcal{IR}$ problem with $\rm{Tr}(\sigma)=0$}\label{S11}

In this section we study the $\mathcal{IR}$ spectra $\sigma$ with trace zero
using the characteristic polynomial rather than the eigenvalues themselves. See Theorem \ref{t39} for a characterization of when $P(x)$ is $\mathcal{R}$.

We say that a weighted digraph $G=(V,E,w)$ with $V=\{1,\dots,n\}$ is a {\it simple EBL digraph} if $w(i,i+1)=1$, $1\leq i \leq n-1$, and the only possible cycles are formed by consecutive vertices and include the first or the last vertex. {\it i.e.}, they are $1,2,\dots,r,1$ or $n-(r-1),n-(r-2),\dots,n,n-(r-1)$ for $1\leq r \leq n$. Thus, the weight of each cycle coincides with the weight of its closing arc: $w(r,1)=a_{r1}$ for the {\it initial} $r$-cycles and $w(n,n+1-r)=a_{n,n+1-r}$ for the {\it final} $r$-cycles. We denote these $r$-cycles by $C_{r,1}$ and $C_{n,n+1-r}$, respectively. 
If $r=1$ we have the initial and final loops $(1,1)$ and $(n,n)$.
If $r=n$ the initial and final $n$-cycles coincide in the {\it basic $n$-cycle} $C_{n,1}$.
The adjacency matrices of these digraphs are nonnegative lower Hessenberg matrices with ones in the superdiagonal and 
some positive entries only in the first column and in the last row. Note that if these digraphs include the basic $n$-cycle {\color{red}$C_{n,1}$} or two non-disjoint cycles 
{\color{red}$C_{p,1}$} and {\color{red}$C_{n,n+1-q}$}, with
$p\geq n+1-q$, then the digraph is strongly connected, independently of the weights of the arcs. 
We distinguish in red these cycles giving the strong connection. There can be other initial or final cycles in the digraph
necessary to obtain the corresponding characteristic polynomial, but irrelevant for the strong connectivity.

In what follows we write {\color{red}$C_{p,1}(*)$} and {\color{red}$C_{n,n+1-q}(**)$} for the corresponding cycles with their weights where  $*=w(p,1)=a_{p1}$ and $**=w(n,n+1-q)=a_{n,n+1-q}$. To calculate the characteristic polynomial of the simple EBL digraphs constructed we use the Coefficient Theorem, Theorem \ref{teocof}.

\begin{teo}  \label{SFS}
If $P(x)=x^n+k_qx^{n-q}$, with $k_q<0$ and $1\leq q\leq n \geq 2$, then $P(x)$ can be realized by a strongly connected simple EBL digraph, so $P(x)$ is $\mathcal{IR}$. 
\end{teo}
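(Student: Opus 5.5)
We need a strongly connected simple EBL digraph on $n$ vertices with characteristic polynomial $x^n+k_qx^{n-q}$, $k_q<0$. By the Coefficient Theorem, Theorem \ref{teocof}, in a simple EBL digraph every cycle has weight equal to its closing arc. So the task is to place cycles so that the only nonzero linear subdigraph weight sum sits in the coefficient $k_q$, and equals $k_q$ there. A single $r$-cycle of weight $w$, with no other cycle present, contributes exactly $-w$ to $k_r$ and nothing else. The plan is to use at most two cycles, chosen so that they share vertices (hence contribute no joint linear subdigraph) and so that the digraph is strongly connected.

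If $q=n$, I take the basic $n$-cycle $C_{n,1}(-k_n)$ with no other cycles. Then $P(x)=x^n+k_n$, and strong connectivity follows from the presence of $C_{n,1}$.

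If $1\le q<n$, I take two cycles of length $q$: the initial $q$-cycle $C_{q,1}$ and the final $q$-cycle $C_{n,n+1-q}$. They are non-disjoint exactly when $q\ge n+1-q$, i.e.\ $2q\ge n+1$. In that case I give them weights $-k_q/2$ each. The only linear subdigraphs are the two single cycles, since their union is not disjoint. Hence $k_q$ is the sum $-(-k_q/2)-(-k_q/2)=k_q$, and all other coefficients vanish. Strong connectivity follows from the remark before the theorem, because $p=q\ge n+1-q$.

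The main obstacle is $2q\le n$, where the two $q$-cycles would be disjoint. Their union would then contribute a positive term to $k_{2q}$, and the path $1\to\cdots\to n$ would not close up. I would try instead to mix cycle lengths: use $C_{q,1}(w)$ together with the $n$-cycle or a long final cycle, and cancel unwanted coefficients. That fails, however, since every single cycle contributes a negative amount to its own length's coefficient, and cancellation needs disjoint pairs, which contribute positively.

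So I would first try to refine the choice as follows. Use a chain of $q$-cycles that are not all initial or final. This is not allowed, because a simple EBL digraph only has initial and final cycles, so another idea is needed. The fallback is to combine Corollary \ref{addzerospol} with the case already handled. Since $x^n+k_qx^{n-q}=x^{\,n-m}(x^m+k_qx^{m-q})$ with $m=q\le 2q-1$ when $q\ge 1$, I realize $x^q+k_q$ by the basic $q$-cycle, which is strongly connected. Appending zeros by Corollary \ref{addzerospol} then gives $\mathcal{IR}$ for $P(x)$.

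To stay inside the EBL class, I would instead take $m=2q-1$ (or $n$ if smaller) and use the two overlapping $q$-cycles on $m$ vertices. I would then check whether the zero-appending construction of Theorem \ref{addx} can be replaced by an EBL one. Concretely, I would add a final loop-free extension using a final cycle of length $n-q+1\ge q$ chosen to overlap $C_{q,1}$, and cancel its contribution to $k_{n-q+1}$. The cancellation step is where I expect trouble, and I would check small cases, for example $n=4$, $q=1$, by hand first.
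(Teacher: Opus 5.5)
Your cases $q=n$ and $2q\ge n+1$ are correct and coincide with the paper's construction. The gap is the case $2q\le n$, which includes every instance with $q=1$. The statement asserts a realization by a strongly connected \emph{simple EBL digraph}, and for these cases you never produce one. Your fallback through Corollary~\ref{addzerospol} (basic $q$-cycle, then append $n-q$ zeros) does legitimately give $\mathcal{IR}$. However, the matrices produced by Theorem~\ref{addx} are not simple EBL digraphs. So the theorem's main assertion, on which Corollary~\ref{summary} later relies, remains unproved, and your last paragraph explicitly leaves the EBL cancellation open.

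The missing idea is one you dismissed for the wrong reason. You argued that cancellation fails because single cycles contribute negatively and disjoint pairs positively. That is exactly the sign pattern needed. Put $a=-k_q/2$. The disjoint pair $C_{q,1}(a)$, $C_{n,n+1-q}(a)$ puts $+a^2$ into $k_{2q}$, and adding the single initial cycle $C_{2q,1}(a^2)$ puts $-a^2$ there. The new cycle is disjoint from $C_{n,n+1-q}$ exactly when $3q\le n$; in that case the pair contributes $+a^3$ to $k_{3q}$, which $C_{3q,1}(a^3)$ cancels, and so on.

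All initial cycles contain vertex $1$ and there is only one final cycle. Hence the only linear subdigraphs are single cycles and disjoint (initial, final) pairs. For $2\le j\le r=\lfloor n/q\rfloor$ this gives $k_{jq}=-a^j+a^{j-1}a=0$, while $k_q=-2a$. At $j=r$ you have $n+1-q\le rq\le n$, so $C_{rq,1}$ meets $C_{n,n+1-q}$ and no new term appears. These two cycles, together with the path $1,\dots,n$, give strong connectivity.

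For $q=1$ this is the paper's construction: loops of weight $-k_1/2$ at vertices $1$ and $n$, and arcs $(r,1)$ of weight $(-k_1/2)^r$, ending with the basic $n$-cycle.
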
 

 \begin{proof}
 Let $P(x)=x^n+k_qx^{n-q}$, $1\leq q\leq n\geq 2\,$ with $k_q<0$.  We start with a digraph on the set of vertices $V=\{1,2, \dots , n\}$ formed by the basic path
$1,2, \dots , n$ with arcs of weight 1. 
 
 If $q=1$, we add loops of weight $-\frac{k_1}{2}$ in the vertices 1 and $n$. 
The characteristic polynomial of this digraph is $x^n+k_1x^{n-1}+\left(-\frac{k_1}{2}\right)^2x^{n-2}$. 

To cancel the coefficient $k_2=\left(-\frac{k_1}{2}\right)^2$ we add the arc $(2,1)$ with weight  $\left(-\frac{k_1}{2}\right)^2$, 
then the characteristic polynomial is  $x^n+k_1x^{n-1}+\left(-\frac{k_1}{2}\right)^3x^{n-3}$.

To cancel the coefficient $k_3=\left(-\frac{k_1}{2}\right)^3$ we add the arc $(3,1)$ with weight $\left(-\frac{k_1}{2}\right)^3$, 
then the characteristic polynomial is  $x^n+k_1x^{n-1}+\left(-\frac{k_1}{2}\right)^4x^{n-4}$. And so on.

Finally, to cancel the independent term $k_n=\left(-\frac{k_1}{2}\right)^n$ we add the arc $(n,1)$ with weight $\left(-\frac{k_1}{2}\right)^n$, 
then the characteristic polynomial is  $x^n+k_1x^{n-1}$. Now the digraph is strongly connected by the basic $n$-cycle {\color{red}$C_{n,1}\left(\left(-\frac{k_1}{2}\right)^n\right)$}.
The nonzero entries of the adjacency matrix of this simple EBL  digraph are: 
$a_{i+1,i}=1$,  $1\leq i \leq n-1$,  $a_{11}=a_{nn}=-\frac{k_1}{2}, a_{r1}=\left(-\frac{k_1}{2}\right)^r$,  $2\leq r \leq n$.

 If $2\leq q<n$, we add the arcs $(q,1)$ and $(n,n+1-q)$ with weights $-\frac{k_q}{2}$. If $q\geq n+1-q$, the $q$-cycles 
{\color{red}$C_{q,1}\left(-\frac{k_q}{2}\right)$} and {\color{red}$C_{n,n+1-q}\left(-\frac{k_q}{2}\right)$} are non-disjoint 
and cover all vertices of the digraph.
Then this simple EBL digraph is strongly connected and its characteristic polynomial $x^n+k_qx^{n-q}$ is $\mathcal{IR}$. 
The nonzero entries of the adjacency matrix are: 
$a_{i+1,i}=1$,  $1\leq i \leq n-1$,  $a_{q1}=a_{n,n+1-q}=-\frac{k_q}{2}$.

If $q< n+1-q$, the two $q$-cycles $C_{q,1}\left(-\frac{k_q}{2}\right)$ and $C_{n,n+1-q}\left(-\frac{k_q}{2}\right)$ are disjoint 
and the characteristic polynomial of the digraph is $x^n+k_qx^{n-q}+\left(-\frac{k_q}{2}\right)^2x^{n-2q}$.

To cancel the coefficient $k_{2q}=\left(-\frac{k_q}{2}\right)^2$ we add the arc $(2q,1)$ with weight  $\left(-\frac{k_q}{2}\right)^2$.
If the cycles $C_{2q,1}\left(\left(-\frac{k_q}{2}\right)^2\right)$ and $C_{n,n+1-q}\left(-\frac{k_q}{2}\right)$ are non-disjoint, then they cover all vertices of the digraph and the characteristic polynomial $x^n+k_qx^{n-q}$ is $\mathcal{IR}$. If the two cycles are disjoint, the characteristic polynomial is    
 $x^n+k_qx^{n-q}+\left(-\frac{k_q}{2}\right)^3x^{n-3q}$.
 
To cancel the coefficient $k_{3q}=\left(-\frac{k_q}{2}\right)^3$ we add the arc $(3q,1)$ with weight  $\left(-\frac{k_q}{2}\right)^3$. If the cycles $C_{3q,1}\left(\left(-\frac{k_q}{2}\right)^2\right)$ and $C_{n,n+1-q}\left(-\frac{k_q}{2}\right)$ are non-disjoint, then they cover all vertices of the digraph and the characteristic polynomial $x^n+k_qx^{n-q}$ is $\mathcal{IR}$. If the two cycles are disjoint, the characteristic polynomial is  $x^n+k_qx^{n-q}+\left(-\frac{k_q}{2}\right)^4x^{n-4q}$. 

And so on until $n+1-q \leq rq \leq n$, then the cycles  
{\color{red}$C_{rq,1}\left(\left(-\frac{k_q}{2}\right)^r\right)$} and 
{\color{red}$C_{n,n+1-q}\left(-\frac{k_q}{2}\right)$} are non-disjoint, they cover all vertices of the digraph and its characteristic polynomial $x^n+k_qx^{n-q}$ is $\mathcal{IR}$. 
The nonzero entries of the adjacency matrix of this simple EBL digraph are: 
$a_{i+1,i}=1$, $1\leq i \leq n-1$,  $a_{q1}=a_{n,n+1-q}=-\frac{k_q}{2}, a_{qr,1}=\left(-\frac{k_q}{2}\right)^r$,  $2\leq r \leq  \left\lfloor \frac{n}{q}\right\rfloor $.

If $q=n$, the basic $n$-cycle {\color{red}$C_{n,1}(-k_n)$} has characteristic polynomial $x^n+k_n$.
 \end{proof}

\begin{nota} \label{rem}
Note that if $q\geq p+1$, the $q$-cycles {\color{red}$C_{q,1}\left(-\frac{k_q}{2}\right)$} and {\color{red}$C_{n,n+1-q}\left(-\frac{k_q}{2}\right)$} are non-disjoint 
and cover all the vertices of the digraph.  The characteristic polynomial $x^n+k_qx^{n-q}$ is $\mathcal{IR}$, and 
the nonzero entries of its adjacency matrix are $a_{i+1,1}=1$,  $1\leq i \leq n-1$,  ${\color{red}a_{q1}=a_{n,n+1-q}=-\frac{k_q}{2}}$.  
If $q=p$ the simple EBL digraph formed by the two non-disjoint cycles {\color{red}$C_{2p,1}\left(\frac{k_p^2}{4}\right)$} and {\color{red}$C_{n,n+1-p}\left(-\frac{k_p}{2}\right)$},
and the $p$-cycle $C_{p,1}\left(-\frac{k_p}{2}\right)$ cover all the vertices of the digraph. The characteristic polynomial $x^n+k_px^{n-p}$ is $\mathcal{IR}$, and 
the nonzero entries of its adjacency matrix are
$a_{i+1,1}=1$,  $1\leq i \leq n-1$, ${\color{red}a_{2p,1}= \frac{k_p^2}{4}}$,  ${\color{red}a_{n,n+1-p}=-\frac{k_p}{2}}$, $a_{p,1}=\frac{k_p}{2}$.  
\end{nota}

The particular solutions of the cases $n=2$ to $n=5$ are as follows.

\begin{co} {\rm ($n=2$)} \label{n=2}
Let $P(x)=x^2+k_2 \neq x^2$. Then $\mathcal{R}$ and $\mathcal{IR}$ are equivalent.
\end{co}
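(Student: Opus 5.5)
The plan is to show both directions directly, since $\mathcal{IR}\subseteq\mathcal{R}$ always holds. Only the implication $\mathcal{R}\Rightarrow\mathcal{IR}$ for $P(x)=x^2+k_2$ with $k_2\neq 0$ needs proof.

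First I determine which such polynomials are realizable. The trace condition gives $k_1=0$. If $P(x)=x^2+k_2$ is realized by a nonnegative $2$-by-$2$ matrix $A$, then the roots $\pm\sqrt{-k_2}$ must satisfy the Perron condition: the spectral radius must itself be an eigenvalue. If $k_2>0$ the roots are $\pm i\sqrt{k_2}$, which are nonreal, so no real nonnegative Perron root exists. Equivalently, the Coefficient Theorem (Theorem \ref{teocof}) with zero trace forces both loops to vanish, so $k_2=-a_{12}a_{21}\leq 0$. Hence realizability together with $k_2\neq 0$ forces $k_2<0$.

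Next, for $k_2<0$, I apply Theorem \ref{SFS} with $n=q=2$. The basic $2$-cycle $C_{2,1}(-k_2)$, that is, the matrix $\left[\begin{array}{cc}0&1\\-k_2&0\end{array}\right]$, is strongly connected and has characteristic polynomial $x^2+k_2$. So $P(x)$ is $\mathcal{IR}$.

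The hypothesis $P(x)\neq x^2$ is what excludes the only degenerate case. There $k_2=0$, the Perron root $0$ has multiplicity $2$, and any trace-zero realization has $a_{12}a_{21}=0$, so it is reducible. This is consistent with Section \ref{S8}. I expect no real obstacle; the only point needing care is verifying that $k_2>0$ is not realizable, which is immediate from the Coefficient Theorem.
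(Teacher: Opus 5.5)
Your proposal is correct and follows the paper's proof: realizability forces $k_2<0$, and the basic $2$-cycle $C_{2,1}(-k_2)$, i.e.\ the matrix $\left[\begin{array}{cc}0&1\\-k_2&0\end{array}\right]$, gives a strongly connected realization. The only difference is that you justify $k_2<0$ directly, via the Perron condition or via the Coefficient Theorem with vanishing loops, whereas the paper simply asserts it; your remark on the excluded case $k_2=0$ is consistent but not needed.
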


\begin{proof} 
$P(x)$ is $\mathcal{R}$ if and only if $k_2<0$, and is $\mathcal{IR}$ by the basic 2-cycle {\color{red}$C_{2,1}\left(-k_2\right)$}, {\it i.e.}, 
by the matrix and digraph

\vspace*{-.2cm}\hspace*{4cm}$\left[\begin{array}{cc}
0 & {\color{red}1}\\
{\color{red}-k_2} & 0
\end{array}\right] $
\hspace{1cm}
\begin{minipage}{.4\hsize}
\setlength{\unitlength}{1cm}
\begin{picture}(2,2.5)
\thicklines 
\put(1,1.25){\circle*{.2}}
\put(2.5,1.25){\circle*{.2}}
\qbezier(1,1.25) (1.75,2) (2.5,1.25)
\put(1.7,.85){\small {\color{red}$1$}}
\put(1.4,1.8){\small {\color{red} $-k_2$}}
\put(1,1.25){\vector (1,0){1.4}}
\put(1.7,1.62){\vector (-1,0){0.09}}
\end{picture}
\end{minipage}\end{proof}

\begin{teo} {\rm ($n=3$)} \label{n=3}
Let $P(x)=x^3+k_2x+k_3 \neq x^3$. Then $\mathcal{R}$ and $\mathcal{IR}$ are equivalent.
\end{teo}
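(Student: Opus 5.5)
Since $\mathcal{IR}\subseteq\mathcal{R}$ always holds, only the implication $\mathcal{R}\Rightarrow\mathcal{IR}$ needs proof. First I will pin down which polynomials are $\mathcal{R}$ using Theorem \ref{t39} with $p=2$, $n=3$ (allowed, since $2\leq 2\leq 3\leq 5$). Condition a) gives $k_2\leq 0$ and $k_3\leq 0$. Conditions b) and c) involve $k_4$ and $k_5$, which are zero here, so they hold trivially. Hence $P(x)=x^3+k_2x+k_3$ is $\mathcal{R}$ exactly when $k_2,k_3\leq 0$; the hypothesis $P(x)\neq x^3$ excludes $k_2=k_3=0$. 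For each remaining case I will write down a strongly connected simple EBL digraph on the vertices $1,2,3$. The basic path $1,2,3$ has weights $1$, and the coefficients are read off with Theorem \ref{teocof}.

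The cases split as follows.
\begin{itemize}
\item If exactly one of $k_2,k_3$ is nonzero, $P(x)$ has the form $x^3+k_qx^{3-q}$ with $k_q<0$, and Theorem \ref{SFS} already gives an $\mathcal{IR}$ realization.
\item If $k_2<0$ and $k_3<0$, I will use the $2$-cycle $C_{2,1}$ of weight $a$ and the $3$-cycle $C_{3,1}$ of weight $c$. These two cycles share vertex $1$, so no linear subdigraph on three vertices uses both. The Coefficient Theorem then gives $k_1=0$, $k_2=-a$ and $k_3=-c$.
\end{itemize}
So the choice $a=-k_2>0$, $c=-k_3>0$ works. The digraph contains the basic $3$-cycle, so it is strongly connected. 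Its adjacency matrix is the companion-type matrix with ones on the superdiagonal and entries $-k_2$, $-k_3$ in positions $(2,1)$ and $(3,1)$. If preferred, the final $2$-cycle $C_{3,2}$ could be used instead of $C_{2,1}$.

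The only point needing care is the reduction of the realizability conditions of Theorem \ref{t39}. One must check that the $k_4$ and $k_5$ conditions really are vacuous when $n=3$: b) reads $0\leq k_2^2/4$, and c) reads $0\leq k_2k_3$ because $k_4=0\leq 0$. Both hold trivially once $k_2,k_3\leq 0$. Beyond that, the constructions are immediate, and I expect no real obstacle.
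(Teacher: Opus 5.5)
Your proposal is correct and follows essentially the paper's approach: you characterize $\mathcal{R}$ via Theorem \ref{t39} as $k_2,k_3\leq 0$, and then you exhibit a strongly connected simple EBL digraph whose coefficients are checked with the Coefficient Theorem. The paper avoids your case split by using the single matrix with $a_{21}=a_{32}=-\frac{k_2}{2}$ and $a_{31}=-k_3$: the basic $3$-cycle gives strong connectivity when $k_3<0$, and the two overlapping $2$-cycles give it when $k_3=0$. That matrix is exactly what your appeal to Theorem \ref{SFS} produces in the case $k_3=0$.
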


\begin{proof} 
$P(x)$ is $\mathcal{R}$ if and only if  $k_2,k_3\leq 0$, and is $\mathcal{IR}$ by the digraph formed by the 
 basic 3-cycle {\color{red}$C_{3,1}\left(-k_3\right)$} and the two non-disjoint 2-cycles {\color{red}$C_{2,1}\left(-\frac{k_2}{2}\right)$} and 
{\color{red}$C_{3,2}\left(-\frac{k_2}{2}\right)$}, {\it i.e.}, 
by the matrix  and digraph

\vspace*{-.2cm}\hspace*{3cm}$\left[\begin{array}{ccc}
0 & {\color{red}1} & 0 \\
{\color{red}-\frac{k_2}{2}} & 0 & {\color{red}1}\\
{\color{red}-k_3} & {\color{red}-\frac{k_2}{2}} & 0\\
\end{array}\right]$
\hspace{1cm}
\begin{minipage}{.4\hsize}
\mbox{}\hspace{-1cm}
\setlength{\unitlength}{1cm}
\begin{picture}(5,3.5)
\thicklines 
\put(1,1.25){\circle*{.2}}
\put(2.5,1.25){\circle*{.2}}
\put(4,1.25){\circle*{.2}}
\put(1,1.25){\line (1,0){3}}
\qbezier(1,1.25) (1.75,2) (2.5,1.25)
\qbezier(2.5,1.25) (3.25,2) (4,1.25)%
\put(1,1.25){\vector (1,0){1.4}}
\put(2.5,1.25){\vector (1,0){1.4}}
\put(1.7,.85){\small {\color{red}$1$}}
\put(3.2,.85){\small {\color{red}$1$}}
\put(1.5,1.85){\small {\color{red}$-\frac{k_2}{2}$}}
\put(2.8,1.85){\small {\color{red}$-\frac{k_2}{2}$}}%
\put(1.7,1.62){\vector (-1,0){0.09}}
\put(3.2,1.62){\vector (-1,0){0.09}}%
\qbezier(1,1.25) (2.5,4) (4,1.25)
\put(2.2,2.8){\small {\color{red}$-k_3$}}
\put(2.5,2.63){\vector (-1,0){0.09}}
\end{picture}
\end{minipage}
\end{proof}

\begin{teo} {\rm ($n=4$)} \label{n=4}
Let $P(x)=x^4+k_2x^2+k_3x+k_4 \neq x^4$. Then $\mathcal{R}$ and $\mathcal{IR}$ are equivalent, except if $P(x)=x^4+k_2x^2+(\frac{k_2}{2})^2$ with $k_2<0$, in which case $\mathcal{IR}$ is not possible.
\end{teo}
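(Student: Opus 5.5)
Since $n=4$ means $2p+1\geq 4$ with $p=2$, Theorem \ref{t39} applies. It says $P(x)=x^4+k_2x^2+k_3x+k_4$ is $\mathcal{R}$ if and only if $k_2,k_3\leq 0$ and $k_4\leq k_2^2/4$; condition c) concerns $k_5$ and is vacuous here. The plan is to treat the excluded polynomial first and then, for every other realizable $P$, build a strongly connected simple EBL digraph whose characteristic polynomial is $P$, computed with the Coefficient Theorem \ref{teocof}.

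\textbf{The exceptional case.} Let $k_4=k_2^2/4$ with $k_2<0$. Then $P(x)=(x^2+k_2/2)^2$, so the spectrum is $\{a,a,-a,-a\}$ with $a=\sqrt{-k_2/2}$. The Perron root has multiplicity $2$. By the remarks before Theorem \ref{Teo1}, an irreducible realization requires a simple Perron root, so $P$ is not $\mathcal{IR}$. It is still $\mathcal{R}$, realized by two copies of the $2$-cycle.

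\textbf{Construction for the remaining cases.} Work on the path $1\to 2\to 3\to 4$ with arc weights $1$, and add arcs $(r,1)$ (initial cycles) and $(4,5-r)$ (final cycles). The cycles available are $C_{2,1}$, $C_{3,1}$, $C_{4,1}$ (weights $a_{21},a_{31},a_{41}$) and $C_{4,3}$, $C_{4,2}$. The only disjoint pair among them is $C_{2,1}$ and $C_{4,3}$. So the Coefficient Theorem gives
\[
k_2=-(a_{21}+a_{43}),\qquad k_3=-(a_{31}+a_{42}),\qquad k_4=-a_{41}+a_{21}a_{43}.
\]
The cases are as follows.

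\emph{Case $k_2<0$ and $k_4\leq 0$.} Take $a_{21}=a_{43}=-k_2/2$. These two $2$-cycles are disjoint, so choose $a_{41}=-k_4+k_2^2/4>0$; the basic $4$-cycle then gives strong connectivity. Put $a_{31}=a_{42}=-k_3/2$.

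\emph{Case $k_2<0$ and $0<k_4<k_2^2/4$.} Take $a_{21}=s$, $a_{43}=t$ with $s+t=-k_2$ and $st=k_4$. This is possible because the discriminant $k_2^2-4k_4$ is positive, and then $s,t>0$ and $a_{41}=0$. Connectivity now has to come from elsewhere. If $k_3<0$, use the two $3$-cycles $C_{3,1}$ and $C_{4,2}$ with $a_{31}=a_{42}=-k_3/2$; they share vertices $2,3$ and cover $\{1,2,3,4\}$, so the digraph is strongly connected. If $k_3=0$, the $3$-cycles are unavailable and I expect this to be the main obstacle. The EBL family fails here: $a_{41}$ must be $0$, and $C_{2,1}\cup C_{4,3}$ does not connect $\{1,2\}$ to $\{3,4\}$. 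I would instead use a non-EBL digraph, a $4$-cycle $1\to2\to3\to4\to1$ with chords $2\to1$ and $4\to3$ (weights $\alpha,\beta$), plus a chord $3\to 2$ or $1\to 4$ to absorb the extra terms. Another option is to perturb: realize $x^4+k_2x^2+\epsilon x+k_4'$ type terms and cancel them. If neither works, I would fall back on the spectrum. Here $P=(x^2-u^2)(x^2-v^2)$ with $u>v>0$, which is negation invariant with simple Perron root. Section \ref{S7} then gives the irreducible realization $\left[\begin{smallmatrix}0&B\\ I&0\end{smallmatrix}\right]$ with $B$ a positive $2\times2$ realization of $\{u^2,v^2\}$.

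\emph{Case $k_2=0$.} Realizability forces $k_4\leq 0$, and $P=x^4+k_3x+k_4$. Take $a_{41}=-k_4$ and $a_{31}=a_{42}=-k_3/2$. If both $k_3$ and $k_4$ are $0$, then $P=x^4$, which the statement excludes. If exactly one is nonzero, Theorem \ref{SFS} applies directly.

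In every case the remaining step is to check strong connectivity through the red cycles, as in Theorem \ref{n=3}.
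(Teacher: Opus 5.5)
Your method (simple EBL digraphs evaluated with the Coefficient Theorem, and a non-simple Perron root for the exception) is the paper's. Your constructions for $k_2<0,\ k_4\le 0$, for $k_3<0$ with $0<k_4<k_2^2/4$, and for $k_2=0$ are correct. However, there is one real hole and one false claim.

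\textbf{The hole: the boundary $k_4=k_2^2/4$ with $k_2<0$ and $k_3<0$.} Your second case needs $k_4<k_2^2/4$. Your exceptional paragraph claims $P=(x^2+k_2/2)^2$ for every $k_4=k_2^2/4$, which silently drops the term $k_3x$. As written, you would classify, e.g., $x^4-2x^2-x+1$ as not $\mathcal{IR}$, contrary to the statement, whose exceptional polynomial has $k_3=0$. The fix is immediate. Take $a_{21}=a_{43}=-k_2/2$, $a_{41}=0$ and $a_{31}=a_{42}=-k_3/2$. Then the $3$-cycles $C_{3,1}$ and $C_{4,2}$ share vertices $2,3$ and cover all four vertices. (The paper's case c1 uses $C_{3,1}(-k_3)$ together with $C_{4,3}$ instead.)

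\textbf{The false claim: the ``main obstacle'' at $k_2<0$, $k_3=0$, $0<k_4<k_2^2/4$ does not exist.} You forced $a_{41}=0$ by imposing $a_{21}a_{43}=k_4$, but the actual relation is $k_4=a_{21}a_{43}-a_{41}$. So the EBL family does not fail there. Your own first-case choice $a_{21}=a_{43}=-k_2/2$, $a_{41}=k_2^2/4-k_4$, $a_{31}=a_{42}=-k_3/2$ has $a_{41}>0$ for every $k_4<k_2^2/4$, whatever the signs of $k_3$ and $k_4$. The basic $4$-cycle then gives strong connectivity. This is exactly the paper's case c2. The digraph in your first ``non-EBL'' alternative (the $4$-cycle with chords $2\to 1$ and $4\to 3$) is this very EBL digraph and needs no extra chords. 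The perturbation idea is unsupported and should be dropped.

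Your Section 7 fallback does work. Take $B$ symmetric with diagonal entries $(u^2+v^2)/2$ and off-diagonal entries $(u^2-v^2)/2$; it is positive since $u>v>0$. The block matrix is then strongly connected with characteristic polynomial $\det(x^2I-B)=(x^2-u^2)(x^2-v^2)$. So that subcase is covered, but a proof should commit to one verified construction rather than list alternatives.

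After these repairs your case split is complete. It uses one uniform construction for $k_2<0$, $k_4<k_2^2/4$; the $3$-cycles or the exception for $k_4=k_2^2/4$; and your $k_2=0$ case. This is a little more economical than the paper's split by the sign of $k_4$.
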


\begin{proof} 
$P(x)$ is $\mathcal{R}$ if and only if  $k_2, k_3\leq 0$ and $k_4\leq \frac{k_2^2}{4}$.

\noindent
a) If $k_4<0$, then $P(x)$ is $\mathcal{IR}$ by the digraph formed by the basic 4-cycle {\color{red}$C_{4,1}\left(-k_4\right)$} and the cycles 
$C_{2,1}\left(-k_2\right)$ and  $C_{3,1}\left(-k_3\right)$, with matrix  and digraph

\hspace*{2.5cm}$\left[\begin{array}{cccc}
0 & {\color{red}1} & 0 & 0\\
-k_2 & 0 & {\color{red}1} & 0\\
-k_3 & 0 & 0 & {\color{red}1}\\
{\color{red}-k_4} & 0 & 0 & 0
\end{array}\right]$
\begin{minipage}{.5\hsize}

\setlength{\unitlength}{1cm}
\begin{picture}(6,3)
\thicklines
\put(1,0.75){\circle*{.2}}
\put(2.5,0.75){\circle*{.2}}
\put(4,0.75){\circle*{.2}}
\put(5.5,0.75){\circle*{.2}}

\put(1,0.75){\line (1,0){4.5}}

\qbezier(1,0.75) (1.75,1.5) (2.5,0.75)

\put(1.6,0.33){\small {\color{red}$1$}}
\put(3.1,0.35){\small {\color{red}$1$}}
\put(4.7,0.35){\small {\color{red}$1$}}

\put(1.55,1.25){\small $-k_2$ }
\put(2.25,1.99){\small $-k_3$ }
\put(2.9,2.65){\small {\color{red}$-k_4$}}

\qbezier (1,0.75) (2.5,3) (4,0.75)
\qbezier (1,0.75) (3.25,4.25) (5.5,0.75)

\put(2.36,0.75){\vector (1,0){0.09}}
\put(3.86,0.75){\vector (1,0){0.09}}
\put(5.36,0.75){\vector (1,0){0.09}}
\put(1.7,1.12){\vector (-1,0){0.09}}
\put(2.5,1.87){\vector (-1,0){0.09}}
\put(3.2,2.49){\vector (-1,0){0.09}}
\end{picture}
\vspace{.5em}

\end{minipage}

\noindent b1) If $k_4=0$, and $k_2,k_3<0$, then $P(x)$ is $\mathcal{IR}$ by the digraph formed by the two non-disjoint cycles 
{\color{red}$C_{2,1}\left(-k_2\right)$} and  {\color{red}$C_{4,2}\left(-k_3\right)$}, with matrix  and digraph

\hspace*{2.5cm}$\left[\begin{array}{cccc}
0 & {\color{red}1} & 0 & 0\\
{\color{red}-k_2} & 0 & {\color{red}1} & 0\\
0 & 0 & 0 & {\color{red}1}\\
0 & {\color{red}-k_3}  &0 & 0
\end{array}\right]$
\begin{minipage}{.5\hsize}

\setlength{\unitlength}{1cm}
\begin{picture}(6,3)
\thicklines
\put(1,0.75){\circle*{.2}}
\put(2.5,0.75){\circle*{.2}}
\put(4,0.75){\circle*{.2}}
\put(5.5,0.75){\circle*{.2}}

\put(1,0.75){\line (1,0){4.5}}

\qbezier(1,0.75) (1.75,1.5) (2.5,0.75)

\put(1.6,0.33){\small {\color{red}$1$}}
\put(3.1,0.35){\small {\color{red}$1$}}
\put(4.7,0.35){\small {\color{red}$1$}}

\put(1.5,1.3){\small {\color{red}$-k_2$}}
\put(3.7,2.05){\small {\color{red}$-k_3$}}

\qbezier (2.5,0.75) (4,3) (5.5,0.75)

\put(2.36,0.75){\vector (1,0){0.09}}
\put(3.86,0.75){\vector (1,0){0.09}}
\put(5.36,0.75){\vector (1,0){0.09}}
\put(1.7,1.12){\vector (-1,0){0.09}}
\put(4,1.87){\vector (-1,0){0.09}}
\end{picture}
\vspace{.5em}

\end{minipage}

\noindent b2) If $k_4=0$, and $k_2=0$ or $k_3=0$, the respective polynomials are $x^2(x^2+k_2)$ or  $x(x^3+k_3)$ and by Theorem \ref{SFS} the polynomial $P(x)$ is $\mathcal{IR}$.

\noindent c1)  If $0<k_4\leq \frac{k_2^2}{4}$ and $k_3<0$, then $P(x)$ is $\mathcal{IR}$ by the digraph formed by the two non-disjoint cycles 
 {\color{red}$C_{3,1}\left(-k_3\right)$} and {\color{red}$C_{4,3}\left(a_{43}\right)$}, and the cycle $C_{2,1}\left(a_{21}\right)$, with matrix and digraph

$\left[\begin{array}{cccc}
0 & {\color{red}1} & 0 & 0\\
a_{21} & 0 & {\color{red}1} & 0\\
{\color{red}-k_3} & 0 & 0 & {\color{red}1}\\
0 & 0 & {\color{red}a_{43}} & 0
\end{array}\right]$  \hspace{0.1cm}{where}\hspace{0.1cm}
${\left\{
\begin{array}{rl}
a_{21} & =-\frac{k_2}{2}-\sqrt{\frac{k_2^2}{4}-k_4},  \\
{\color{red}a_{43}} & ={\color{red}-\frac{k_2}{2}+\sqrt{\frac{k_2^2}{4}-k_4}},
\end{array}
\right.}$
\begin{minipage}{.5\hsize}

\setlength{\unitlength}{1cm}
\begin{picture}(6,3)
\thicklines
\put(1,0.75){\circle*{.2}}
\put(2.5,0.75){\circle*{.2}}
\put(4,0.75){\circle*{.2}}
\put(5.5,0.75){\circle*{.2}}

\put(1,0.75){\line (1,0){4.5}}

\qbezier(1,0.75) (1.75,1.5) (2.5,0.75)
\qbezier(4,0.75) (4.75,1.5) (5.5,0.75)

\put(1.6,0.33){\small {\color{red}$1$}}
\put(3.1,0.35){\small {\color{red}$1$}}
\put(4.7,0.35){\small {\color{red}$1$}}

\put(1.65,1.3){\small $a_{21}$}
\put(4.6,1.3){\small {\color{red}$a_{43}$}}
\put(2.2,2.05){\small {\color{red}$-k_{3}$}}

\qbezier (1,0.75) (2.5,3) (4,0.75)%

\put(2.36,0.75){\vector (1,0){0.09}}
\put(3.86,0.75){\vector (1,0){0.09}}
\put(5.36,0.75){\vector (1,0){0.09}}
\put(1.7,1.12){\vector (-1,0){0.09}}
\put(4.7,1.12){\vector (-1,0){0.09}}
\put(2.5,1.87){\vector (-1,0){0.09}}
\end{picture}
\vspace{.5em}

\end{minipage}

\noindent c2)  If $0<k_4<\frac{k_2^2}{4}$ and $k_3=0$, then 
$P(x)=\left(x^2-\left(-\frac{k_2}{2}-\sqrt{\frac{k_2^2}{4}-k_4}\right)\right)\left(x^2-\left(-\frac{k_2}{2}+\sqrt{\frac{k_2^2}{4}-k_4}\right)\right)$ is $\mathcal{IR}$ by the matrix and digraph

\hspace*{2.5cm}$\left[\begin{array}{cccc}
0 & {\color{red}1} & 0 & 0\\
-\frac{k_2}{2} & 0 & {\color{red}1} & 0\\
0 & 0 & 0 & {\color{red}1}\\
{\color{red}\frac{k_2^2}{4}-k_4} & 0 & -\frac{k_2}{2} & 0
\end{array}\right]$
\begin{minipage}{.5\hsize}

\setlength{\unitlength}{1cm}
\begin{picture}(6,3)
\thicklines
\put(1,0.75){\circle*{.2}}
\put(2.5,0.75){\circle*{.2}}
\put(4,0.75){\circle*{.2}}
\put(5.5,0.75){\circle*{.2}}

\put(1,0.75){\line (1,0){4.5}}

\qbezier(1,0.75) (1.75,1.5) (2.5,0.75)
\qbezier(4,0.75) (4.75,1.5) (5.5,0.75)

\put(1.6,0.33){\small {\color{red}$1$}}
\put(3.1,0.35){\small {\color{red}$1$}}
\put(4.7,0.35){\small {\color{red}$1$}}

\put(1.65,1.3){\small $-\frac{k_2}{2}$}
\put(4.1,1.35){\small $-\frac{k_2}{2}$}
\put(2.75,2.55){\small {\color{red}$\frac{k_2^2}{4}-k_4$}}

\qbezier (1,0.75) (3.25,3.95) (5.5,0.75)

\put(2.36,0.75){\vector (1,0){0.09}}
\put(3.86,0.75){\vector (1,0){0.09}}
\put(5.36,0.75){\vector (1,0){0.09}}
\put(1.7,1.12){\vector (-1,0){0.09}}
\put(4.7,1.12){\vector (-1,0){0.09}}
\put(3.2,2.35){\vector (-1,0){0.09}}
\end{picture}
\vspace{.5em}

\end{minipage}

\noindent c3)  If $0<k_4=\frac{k_2^2}{4}$ and $k_3=0$, then $P(x)=\left(x^2+\frac{k_2}{2}\right)^2$ does not have simple Perron root, 
so is not $\mathcal{IR}$.\end{proof}

\begin{teo} {\rm ($n=5$)} \label{n=5}
Let $P(x)=x^5+k_2x^3+k_3x^2+k_4x+k_5 \neq x^5$. Then $\mathcal{R}$ and $\mathcal{IR}$ are equivalent, except if 

\noindent 1) $k_2, k_3<0, k_4=0$ and $k_5=k_2k_3$; or

\noindent 2) $k_2,k_3<0, 0<k_4<\frac{k_2^2}{4}$ and $k_5=k_{3}\Big(\frac{k_2}{2}-\sqrt{\frac{k_2^2}{4}-k_{4}}\Big)$; or

\noindent 3) $k_2<0, k_3=0, k_4=\frac{k_2^2}{4}$ and $k_5=0$;

\noindent in which cases $\mathcal{IR}$ is not possible.

\end{teo}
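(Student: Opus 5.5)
The plan is to use Theorem \ref{t39} with $p=2$, $n=5$ to fix exactly which polynomials are $\mathcal{R}$. A polynomial $P(x)=x^5+k_2x^3+k_3x^2+k_4x+k_5$ is $\mathcal{R}$ if and only if $k_2,k_3\leq 0$, $k_4\leq k_2^2/4$, and $k_5\leq k_2k_3$ when $k_4\leq 0$, respectively $k_5\leq k_3\big(\frac{k_2}{2}-\sqrt{\frac{k_2^2}{4}-k_4}\big)$ when $k_4>0$. The proof then has two halves: first show that the three listed families are not $\mathcal{IR}$, then build a strongly connected simple EBL digraph for every other realizable $P(x)$.

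\textbf{Negative half.} Write $u_1=-\frac{k_2}{2}-\sqrt{\frac{k_2^2}{4}-k_4}$ and $u_2=-\frac{k_2}{2}+\sqrt{\frac{k_2^2}{4}-k_4}$, so that $u_1u_2=k_4$.
\begin{itemize}
\item Case 3) is $P(x)=x\left(x^2+\frac{k_2}{2}\right)^2$. Its Perron root $\sqrt{-k_2/2}$ is double, so it is not $\mathcal{IR}$.
\item Case 1) factors as $(x^2+k_2)(x^3+k_3)$. The roots $\pm\sqrt{-k_2}$ and the three cube roots of $-k_3$ cannot be arranged into a Frobenius structure. If $\sqrt{-k_2}>\sqrt[3]{-k_3}$, then $h=2$, but the inner circle carries only $3$ points. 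If $\sqrt{-k_2}<\sqrt[3]{-k_3}$, then $h=3$, but the inner circle carries $2$ points. If they are equal, the Perron root is double. In each subcase Theorem \ref{Teo1} excludes $\mathcal{IR}$.
\item Case 2) factors as $(x^2-u_2)(x^3-u_1x+k_3)$; expanding confirms $k_5=-u_2k_3$. Here the Perron root lies in the cubic factor, strictly outside the radius $\sqrt{u_2}$, so Theorem \ref{Teo1} does not apply. This is the step I expect to be the main obstacle. I would instead run a coefficient argument via Theorem \ref{teocof}. Since the trace is $0$, a realization has no loops. Then $k_5$ is a sum of terms from $5$-cycles and from unions of a disjoint $2$-cycle and $3$-cycle, and the extremal bound of \cite{TAAMP} on $k_5$ is attained only when no $5$-cycle is present and the arcs split so that the digraph decomposes according to the factorization. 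In that situation no closed walk can join the "$2$-cycle part" to the "$3$-cycle part", so every realization is reducible. Concretely, I would redo the inequality chain of \cite[Theorem 39]{TAAMP}, tracking when each inequality is tight. I expect this to force both $w(C_5)=0$ and a block-triangular shape. The same tightness argument also gives a second proof of case 1).
\end{itemize}

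\textbf{Positive half.} Split on the sign of $k_5$, mirroring the proof of Theorem \ref{n=4}.
\begin{itemize}
\item If $k_5<0$, use the basic $5$-cycle ${\color{red}C_{5,1}}$ together with initial cycles $C_{2,1},C_{3,1},C_{4,1}$. Possibly a final $2$-cycle is also needed, so that $k_4$ can be made positive through a pair of disjoint $2$-cycles. Then solve for the weights triangularly from $k_2$ to $k_5$. Strong connectivity comes for free from ${\color{red}C_{5,1}}$.
\item If $k_5=0$, then $P(x)=xQ(x)$ with $Q$ of degree $4$ and trace $0$. Theorem \ref{n=4} and Corollary \ref{addzerospol} give $\mathcal{IR}$ unless $Q=x^4+k_2x^2+(k_2/2)^2$, which is exactly exception 3). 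When $k_2=0$ or $k_3=0$ and the polynomial is a binomial, Theorem \ref{SFS} applies directly.
\item If $k_5>0$, realizability forces $k_3<0$ and $k_5$ strictly below the bound, so we are outside exceptions 1) and 2). I would take two non-disjoint red cycles, for instance ${\color{red}C_{3,1}}$ and ${\color{red}C_{5,3}}$ (or ${\color{red}C_{5,4}}$), together with a disjoint pair of $2$-cycles $C_{2,1}$ and $C_{5,4}$ to produce $k_4$ and $k_5$. The weights would be modelled on case c1) of Theorem \ref{n=4}.
\end{itemize}
The key check in the last case is that the strict inequality on $k_5$ is exactly what keeps the overlapping cycle's weight positive. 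Equality corresponds precisely to that weight vanishing and the digraph disconnecting, which matches the exceptional cases.
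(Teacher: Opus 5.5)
Your negative half follows the paper: Theorem \ref{Teo1} for exceptions 1) and 3), and for exception 2) an equality analysis of the bound from \cite{TAAMP}, which is what the paper invokes in its case c2). Your tightness plan does go through: every $2$-cycle has weight at most $u_2$, and equality forces a unique $2$-cycle of weight $u_2$, no $4$- or $5$-cycles, and every $3$-cycle disjoint from that $2$-cycle, so no cycle crosses the cut. One aside is not always true: the Perron root need not lie in the cubic factor, and it equals $\sqrt{u_2}$ when $\sqrt{u_2}(u_2-u_1)+k_3\ge 0$; you do not need this. Your case $k_5<0$ (basic $5$-cycle) is correct. Your case $k_5=0$ (Theorem \ref{n=4} plus Corollary \ref{addzerospol}) is also correct, and is a neat shortcut compared with the paper's separate subcases.

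The gap is the case $k_5>0$. Realizability does not force $k_5$ strictly below the bound, since exceptions 1) and 2) themselves have $k_5>0$. It is also false that equality occurs only in the exceptional cases. The realizable families (i) $k_2,k_3<0$, $k_4<0$, $k_5=k_2k_3$ and (ii) $k_2,k_3<0$, $k_4=\frac{k_2^2}{4}$, $k_5=\frac{k_2k_3}{2}$ lie on the bound and are not exceptional, so you must show they are $\mathcal{IR}$.

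In your scheme, with $C_{2,1}(a_2)$, $C_{3,1}(a_3)$, $C_{5,3}(b_3)$, $C_{5,4}(b_2)$ and possibly $C_{5,1}(d)$, Theorem \ref{teocof} gives $k_5=a_3b_2+a_2b_3-d\le \max(a_2,b_2)(a_3+b_3)\le k_2k_3$. In family (i), equality forces $d=0$, $\min(a_2,b_2)=0$ and $a_3b_3=0$, so the red overlap disappears, yet the polynomial is $\mathcal{IR}$. The missing idea is that when $k_4<0$ a $4$-cycle of weight $-k_4$ is needed anyway and can carry the connection. Take $C_{3,1}(-k_3)$, $C_{4,1}(-k_4)$, $C_{5,4}(-k_2)$; here $C_{4,1}$ and $C_{5,4}$ meet at vertex $4$ (the paper's case a1).

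In family (ii) you have $u_1=u_2$. With $a_2=b_2=-\frac{k_2}{2}$, the value of $k_5$ does not depend on how $-k_3$ is split between $C_{3,1}$ and $C_{5,3}$. Taking both weights equal to $-\frac{k_3}{2}$ gives a strongly connected realization (the paper's case c4). So your ``key check'' that equality makes the overlap weight vanish is wrong there, even though the construction survives. Note also that $C_{3,1}$ and $C_{5,4}$ are disjoint when $n=5$, so the alternative ``or $C_{5,4}$'' gives no strong connectivity.

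The clean repair is to split by tightness rather than by the sign of $k_5$. Let $e$ be the weight of the final $2$-cycle $C_{5,4}$. Use initial and final $2$-cycle weights $0$ and $e=-k_2$ (plus $C_{4,1}(-k_4)$) if $k_4\le 0$, and $u_1$ and $e=u_2$ if $k_4>0$. Then your $k_5<0$ construction gives $w(C_{5,1})=-k_3e-k_5>0$ for every $k_5$ strictly below the bound. The equality cases then sort as follows:
\begin{itemize}
\item $k_4<0$: family (i), or $k_5=0$;
\item $k_4=0$: exception 1), or $k_5=0$;
\item $0<k_4<\frac{k_2^2}{4}$: exception 2), or $k_5=0$;
\item $k_4=\frac{k_2^2}{4}>0$: family (ii), or exception 3).
\end{itemize}
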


\begin{proof} 
$P(x)$ is $\mathcal{R}$ if and only if  $k_2,k_3\leq 0$, $k_4\leq \frac{k_2^2}{4}$ and 
$k_{5}\leq\left\{
\begin{array}{ll}
	k_2k_{3}& \;\; \mbox{if }\;\;k_{4}\leq 0,\\
	\vspace*{-.3cm} & \\
	k_{3}\Big(\frac{k_2}{2}-\sqrt{\frac{k_2^2}{4}-k_{4}}\Big)&\;\; \mbox{if }\;\; k_{4}>0.\\
\end{array}\right.$

\noindent a1) If $k_4< 0$, and $k_2<0$ or $k_5<k_2k_3$, then $P(x)$ is $\mathcal{IR}$ by the digraph formed by the non-disjoint cycles
{\color{red}$C_{4,1}\left(-k_4\right)$} and {\color{red}$C_{5,4}\left(-k_2\right)$}, or 
the basic $5$-cycle {\color{red}$C_{5,1}\left(k_2k_3-k_5\right)$}, 
and the cycle $C_{3,1}\left(-k_3\right)$ with matrix and digraph

\hspace*{2.2cm}$\left[\begin{array}{ccccc}
0 & {\color{red}1} & 0 & 0 & 0\\
0 & 0 & {\color{red}1} & 0 & 0\\
-k_3 & 0 & 0 & {\color{red}1} & 0\\
{\color{red}-k_4} & 0 & 0 & 0 & {\color{red}1} \\
{\color{red}k_2k_3-k_5} & 0 & 0 & {\color{red}-k_2} & 0
\end{array}\right]$
 \begin{minipage}{.35\hsize}
\setlength{\unitlength}{.9cm}
\vspace*{.3cm}
\begin{picture}(6,3)
\thicklines
\put(1,0.75){\circle*{.2}}
\put(2.5,0.75){\circle*{.2}}
\put(4,0.75){\circle*{.2}}
\put(5.5,0.75){\circle*{.2}}
\put(7,0.75){\circle*{.2}}

\put(1,0.75){\line (1,0){5.9}}

\qbezier(1,0.75) (2.5,1.8) (4,0.75)
\qbezier(1,0.75) (3.25,3.5) (5.5,0.75)
\qbezier(5.5,0.75) (6.25,1.5) (7,0.75)%


\put(1.6,0.33){\small {\color{red}$1$}}
\put(3.1,0.35){\small {\color{red}$1$}}
\put(4.7,0.35){\small {\color{red}$1$}}
\put(6.2,0.35){\small {\color{red}$1$}}

\put(3.1,2.3){\small {\color{red}$-k_4$}}
\put(5.8,1.28){\small {\color{red}$-k_2$}}
\put(2.2,1.45){\small $-k_3$ }
\put(3.1,3.2){\small {\color{red}$k_2k_3-k_5$}}

\qbezier (1,0.75) (4,5.25) (7,0.75)

\put(2.36,0.75){\vector (1,0){0.09}}
\put(3.86,0.75){\vector (1,0){0.09}}
\put(5.36,0.75){\vector (1,0){0.09}}
\put(6.86,0.75){\vector (1,0){0.09}}
\put(2.4,1.28){\vector (-1,0){0.09}}
\put(3.4,2.12){\vector (-1,0){0.09}}
\put(6.15,1.12){\vector (-1,0){0.09}}%
\put(3.9,3.01){\vector (-1,0){0.09}}
\end{picture}
\end{minipage}

\noindent a2) If $k_3,k_4<0$ and  $k_2=k_5=0$, then $P(x)$ is $\mathcal{IR}$ by the digraph formed by  
the two non-disjoint cycles {\color{red}$C_{5,2}\left(-k_4\right)$} and {\color{red}$C_{3,1}\left(-k_3\right)$} with matrix and digraph

\hspace*{2.2cm}$\left[\begin{array}{ccccc}
0 & {\color{red}1} & 0 & 0 & 0\\
0 & 0 & {\color{red}1} & 0 & 0\\
{\color{red}-k_3}& 0 & 0 & {\color{red}1} & 0\\
0 & 0 & 0 & 0 & {\color{red}1} \\
0 & {\color{red}-k_4} & 0 & 0 & 0
\end{array}\right]$
\begin{minipage}{.35\hsize}
\setlength{\unitlength}{.9cm}
\vspace*{.3cm}
\begin{picture}(6,3)
\thicklines
\put(1,0.75){\circle*{.2}}
\put(2.5,0.75){\circle*{.2}}
\put(4,0.75){\circle*{.2}}
\put(5.5,0.75){\circle*{.2}}
\put(7,0.75){\circle*{.2}}

\put(1,0.75){\line (1,0){5.9}}

\qbezier(1,0.75) (2.5,1.8) (4,0.75)
\qbezier(2.5,0.75) (4.65,3.5) (7,0.75)%


\put(1.6,0.33){\small {\color{red}$1$}}
\put(3.1,0.35){\small {\color{red}$1$}}
\put(4.7,0.35){\small {\color{red}$1$}}
\put(6.2,0.35){\small {\color{red}$1$}}

\put(4.3,2.3){\small {\color{red}$-k_4$}}
\put(2.2,1.45){\small {\color{red}$-k_3$}}


\put(2.36,0.75){\vector (1,0){0.09}}
\put(3.86,0.75){\vector (1,0){0.09}}
\put(5.36,0.75){\vector (1,0){0.09}}
\put(6.86,0.75){\vector (1,0){0.09}}
\put(2.4,1.28){\vector (-1,0){0.09}}
\put(4.55,2.13){\vector (-1,0){0.09}}
\end{picture}
\end{minipage}

\noindent a3) If $k_4<0$, and $k_2=k_3=k_5=0$, then $P(x)=x(x^4+k_4)$  and, by Theorem \ref{SFS}, is $\mathcal{IR}$.

\noindent b1) If $k_4=0$ and $k_5<k_2k_3$, then $P(x)$ is $\mathcal{IR}$ by the digraph formed by the basic $5$-cycle {\color{red}$C_{5,1}\left(k_2k_3-k_5\right)$}, 
and the cycles $C_{3,1}\left(-k_3\right)$ and $C_{5,4}\left(-k_2\right)$ with matrix and digraph

\hspace*{2.2cm}$\left[\begin{array}{ccccc}
0 & {\color{red}1} & 0 & 0 & 0\\
0 & 0 & {\color{red}1} & 0 & 0\\
-k_3 & 0 & 0 & {\color{red}1} & 0\\
0 & 0 & 0 & 0 & {\color{red}1} \\
{\color{red}k_2k_3-k_5} & 0 & 0 & -k_2 & 0
\end{array}\right]$
\begin{minipage}{.35\hsize}
\setlength{\unitlength}{.9cm}
\vspace*{.7cm}
\begin{picture}(6,3)
\thicklines
\put(1,0.75){\circle*{.2}}
\put(2.5,0.75){\circle*{.2}}
\put(4,0.75){\circle*{.2}}
\put(5.5,0.75){\circle*{.2}}
\put(7,0.75){\circle*{.2}}

\put(1,0.75){\line (1,0){5.9}}

\qbezier(1,0.75) (2.5,1.8) (4,0.75)
\qbezier(5.5,0.75) (6.25,1.5) (7,0.75)


\put(1.6,0.33){\small {\color{red}$1$}}
\put(3.1,0.35){\small {\color{red}$1$}}
\put(4.7,0.35){\small {\color{red}$1$}}
\put(6.2,0.35){\small {\color{red}$1$}}

\put(5.8,1.3){\small $-k_2$}
\put(2.1,1.45){\small $-k_3$}
\put(3.1,3.2){\small {\color{red}$k_2k_3-k_5$}}

\qbezier (1,0.75) (4,5.25) (7,0.75)

\put(2.36,0.75){\vector (1,0){0.09}}
\put(3.86,0.75){\vector (1,0){0.09}}
\put(5.36,0.75){\vector (1,0){0.09}}
\put(6.86,0.75){\vector (1,0){0.09}}
\put(2.4,1.28){\vector (-1,0){0.09}}
\put(6.15,1.12){\vector (-1,0){0.09}}%
\put(3.9,3.01){\vector (-1,0){0.09}}
\end{picture}
\end{minipage}

\noindent b2) If $k_4=0$, $k_5=k_2k_3$, and $k_2,k_3<0$, then  $P(x)=(x^2+k_2)(x^3+k_3)$ does not have Frobenius structure so, by Theorem \ref{Teo1}, is not $\mathcal{IR}$. This is the exceptional case 1).

\noindent b3) If $k_4=0$, $k_5=k_2k_3$, and $k_2=0$ or $k_3=0$, the respective polynomials $x^2(x^3+k_3)$ or $x^3(x^2+k_2)$  are $\mathcal{IR}$ by Theorem \ref{SFS}.

\noindent c1)  If $0<k_4\leq \frac{k_2^2}{4}$ and $k_5<k_3\left(\frac{k_2}{2}-\sqrt{\frac{k_2^2}{4}-k_4}\right)$, then $P(x)$ is $\mathcal{IR}$ by the digraph formed by
the basic $5$-cycle  {\color{red}$C_{5,1}\left(a_{51}\right)$}, and the cycles $C_{2,1}\left(a_{21}\right)$,  {$C_{3,1}\left(-k_3\right)$} and $C_{5,4}\left(a_{54}\right)$, 
with matrix and digraph
 
\hspace*{-1.3cm}$\left[\begin{array}{ccccc}
0 & {\color{red}1} & 0 & 0 & 0\\
a_{21} & 0 & {\color{red}1} & 0 & 0\\
-k_3 & 0 & 0 & {\color{red}1} & 0\\
0 & 0 & 0 & 0 & {\color{red}1} \\
{\color{red}a_{51}} & 0 & 0 & a_{54} & 0
\end{array}\right]$ \hspace{.1cm}{with}\hspace{.1cm}
${\left\{
\begin{array}{rl}
a_{21} & =-\frac{k_2}{2}-\sqrt{\frac{k_2^2}{4}-k_4},  \\
a_{54} & =-\frac{k_2}{2}+\sqrt{\frac{k_2^2}{4}-k_4},\\
{\color{red}a_{51}} & ={\color{red} k_3\left(\frac{k_2}{2}-\sqrt{\frac{k_2^2}{4}-k_4}\right)-k_5}>0, \\
\end{array}
\right.}$
\hspace*{-.8cm}\begin{minipage}{.35\hsize}
\setlength{\unitlength}{.9cm}
\vspace*{.7cm}
\begin{picture}(6,3)
\thicklines
\put(1,0.75){\circle*{.2}}
\put(2.5,0.75){\circle*{.2}}
\put(4,0.75){\circle*{.2}}
\put(5.5,0.75){\circle*{.2}}
\put(7,0.75){\circle*{.2}}

\put(1,0.75){\line (1,0){5.9}}

\qbezier(1,0.75) (2.5,2.8) (4,0.75)
\qbezier(5.5,0.75) (6.25,1.5) (7,0.75)
\qbezier(1,0.75) (1.75,1.5) (2.5,0.75)


\put(1.6,0.33){\small {\color{red}$1$}}
\put(3.1,0.35){\small {\color{red}$1$}}
\put(4.7,0.35){\small {\color{red}$1$}}
\put(6.2,0.35){\small {\color{red}$1$}}

\put(1.7,1.25){\small $a_{21}$ }
\put(5.9,1.3){\small $a_{54}$}
\put(2.2,1.95){\small $-k_3$}
\put(3.8,3.2){\small {\color{red}$a_{51}$}}

\qbezier (1,0.75) (4,5.25) (7,0.75)

\put(1.73,1.125){\vector (-1,0){0.09}}
\put(2.36,0.75){\vector (1,0){0.09}}
\put(3.86,0.75){\vector (1,0){0.09}}
\put(5.36,0.75){\vector (1,0){0.09}}
\put(6.86,0.75){\vector (1,0){0.09}}
\put(2.4,1.78){\vector (-1,0){0.09}}%
\put(6.15,1.12){\vector (-1,0){0.09}}
\put(3.9,3.01){\vector (-1,0){0.09}}
\end{picture}
\end{minipage}

\noindent c2) Let $0<k_4< \frac{k_2^2}{4}$, $k_3<0$ and $k_5=k_3\left(\frac{k_2}{2}-\sqrt{\frac{k_2^2}{4}-k_4}\right)$.

\noindent Since $k_{4}>0$, there exists $m_0\geq -\frac{k_2}{2}$ such that $k_{4}=m_0(-k_2-m_0)$. Then $m_0=-\frac{k_2}{2}+\sqrt{\frac{k_2^2}{4}-k_{4}}$.

\noindent This situation correponds to a digraph with two disjoint $2$-cycles with weights $m_0$ and $-k_2-m_0$ and without $4$-cycles. 
And this is the optimum situation (see the proof of Theorem \ref{t39} in \cite{TAAMP}).

The value $k_{3}\left(\frac{k_2}{2}-\sqrt{\frac{k_2^2}{4}-k_{4}}\right)$ is the maximum value of the coefficient $k_{5}$ and so it is obtained with
the absence of $5$-cycles and with a single $3$-cycle disjoint with the $2$-cycle of maximum weight $m_0$. 
For that $k_{5}=k_{3}\left(\frac{k_2}{2}-\sqrt{\frac{k_2^2}{4}-k_{4}}\right)$. This digraph is not strongly connected and so there is no $\mathcal{IR}$.
This is the exceptional case 2).

\noindent c3)  If $0<k_4<\frac{k_2^2}{4}$ and $k_3=k_5=0$, then $P(x)=x\left(x^2-\left(-\frac{k_2}{2}-\sqrt{\frac{k_2^2}{4}-k_4}\right)\right)\left(x^2-\left(-\frac{k_2}{2}+\sqrt{\frac{k_2^2}{4}-k_4}\right)\right)$  is $\mathcal{IR}$ by the digraph formed by the two non-disjoint cycles  {\color{red}$C_{4,1}\left(\frac{k_2^2}{4}-k_4\right)$} and 
{\color{red}$C_{5,4}\left(-\frac{k_2}{2}\right)$}, and the cycle $C_{2,1}\left(-\frac{k_2}{2}\right)$ with matrix and digraph

\hspace*{2.2cm}$\left[\begin{array}{ccccc}
0 & {\color{red}1} & 0 & 0 & 0\\
-\frac{k_2}{2} & 0 & {\color{red}1} & 0 & 0\\
0 & 0 & 0 & {\color{red}1} & 0\\
{\color{red}\frac{k_2^2}{4}-k_4} & 0 & 0 & 0 & {\color{red}1}\\
0 & 0 & 0 & {\color{red}-\frac{k_2}{2}} & 0
\end{array}\right]$
\begin{minipage}{.35\hsize}
\setlength{\unitlength}{.9cm}
\vspace*{.7cm}
\begin{picture}(6,3)
\thicklines
\put(1,0.75){\circle*{.2}}
\put(2.5,0.75){\circle*{.2}}
\put(4,0.75){\circle*{.2}}
\put(5.5,0.75){\circle*{.2}}
\put(7,0.75){\circle*{.2}}

\put(1,0.75){\line (1,0){5.9}}

\qbezier(1,0.75) (3.25,4.5) (5.5,0.75)%
\qbezier(5.5,0.75) (6.25,1.5) (7,0.75)
\qbezier(1,0.75) (1.75,1.5) (2.5,0.75)


\put(1.6,0.33){\small {\color{red}$1$}}
\put(3.1,0.35){\small {\color{red}$1$}}
\put(4.7,0.35){\small {\color{red}$1$}}
\put(6.2,0.35){\small {\color{red}$1$}}

\put(1.6,1.35){\small $-\frac{k_2}{2}$ }
\put(5.8,1.4){\small {\color{red}$-\frac{k_2}{2}$}}
\put(2.7,2.9){\small {\color{red}$\frac{k_2^2}{4}-k_4$}}


\put(1.75,1.12){\vector (-1,0){0.09}}
\put(2.36,0.75){\vector (1,0){0.09}}
\put(3.86,0.75){\vector (1,0){0.09}}
\put(5.36,0.75){\vector (1,0){0.09}}
\put(6.86,0.75){\vector (1,0){0.09}}
\put(3.15,2.63){\vector (-1,0){0.09}}
\put(6.15,1.12){\vector (-1,0){0.09}}
\end{picture}
\end{minipage}

\noindent c4) If $0<k_4= \frac{k_2^2}{4}$, $k_3<0$ and $k_5=\frac{k_2k_3}{2}$, then $P(x)$ is $\mathcal{IR}$ 
by the digraph formed by the two non-disjoint 3-cycles {\color{red}$C_{3,1}\left(-\frac{k_3}{2}\right)$} and {\color{red}$C_{5,3}\left(-\frac{k_3}{2}\right)$}, 
and the 2-cycles $C_{2,1}\left(-\frac{k_2}{2}\right)$ and $C_{5,4}\left(-\frac{k_2}{2}\right)$ with matrix and digraph

\hspace*{2.2cm}$\left[\begin{array}{ccccc}
0 & {\color{red}1} & 0 & 0 & 0\\
-\frac{k_2}{2} & 0 & {\color{red}1} & 0 & 0\\
{\color{red}-\frac{k_3}{2}} & 0 & 0 & {\color{red}1} & 0\\
0 & 0 & 0 & 0 & {\color{red}1} \\
0 & 0 & {\color{red}-\frac{k_3}{2}} & -\frac{k_2}{2} & 0
\end{array}\right]$
\begin{minipage}{.35\hsize}
\setlength{\unitlength}{.9cm}
\vspace*{.7cm}
\begin{picture}(6,3)
\thicklines
\put(1,0.75){\circle*{.2}}
\put(2.5,0.75){\circle*{.2}}
\put(4,0.75){\circle*{.2}}
\put(5.5,0.75){\circle*{.2}}
\put(7,0.75){\circle*{.2}}

\put(1,0.75){\line (1,0){5.9}}

\qbezier(5.5,0.75) (6.25,1.5) (7,0.75)
\qbezier(1,0.75) (1.75,1.5) (2.5,0.75)


\put(1.6,0.33){\small {\color{red}$1$}}
\put(3.1,0.35){\small {\color{red}$1$}}
\put(4.7,0.35){\small {\color{red}$1$}}
\put(6.2,0.35){\small {\color{red}$1$}}

\put(1.5,1.35){\small $-\frac{k_2}{2}$ }
\put(5.8,1.4){\small $-\frac{k_2}{2}$}
\put(2.1,2.7){\small {\color{red}$-\frac{k_3}{2}$}}
\put(5.1,2.7){\small {\color{red}$-\frac{k_3}{2}$}}

\qbezier (1,0.75) (2.5,4) (4,0.75)
\qbezier (4,0.75) (5.5,4) (7,0.75)

\put(1.75,1.12){\vector (-1,0){0.09}}
\put(2.36,0.75){\vector (1,0){0.09}}
\put(3.86,0.75){\vector (1,0){0.09}}
\put(5.36,0.75){\vector (1,0){0.09}}
\put(6.86,0.75){\vector (1,0){0.09}}
\put(2.43,2.38){\vector (-1,0){0.09}}%
\put(5.43,2.38){\vector (-1,0){0.09}}
\put(6.15,1.12){\vector (-1,0){0.09}}
\end{picture}
\end{minipage}

\noindent c5)  If $0<k_4=\frac{k_2^2}{4}$ and $k_3=k_5=0$, then $P(x)=x\left(x^2+\frac{k_2}{2}\right)^2$ does not have simple Perron root, so is not $\mathcal{IR}$. This is the exceptional case 3).\end{proof}

We give the general $\mathcal{IR}$ solution for the polynomial $P(x)=x^n+k_px^{n-p}+ \dots +k_{n-1}x+k_n$ in terms of the parity of $n$ 
and of the sign of the coefficient $k_{2p}$ (see the proof of Theorem 39 in \cite{TAAMP}).
In the next two theorems, for each $\mathcal{IR}$ polynomial $P(x)$, we give the initial and final cycles of the 
strongly connected simple EBL digraph realizing $P(x)$,
and, to describe the corresponding irreducible matrix, we only give the possible nonzero entries in the first column and in the last row of the adjacency matrix corresponding 
to the closing arcs of the initial and final cycles of the mentioned digraph realizing each $\mathcal{IR}$ polynomial $P(x)$.

\begin{teo} {\rm ($n=2p$)} \label{n=2p}
Let $P(x)=x^{2p}+k_px^p+k_{p+1}x^{p-1}+ \dots +k_{2p-1}x+k_{2p} \neq x^{2p}$ with $p\geq 3$. Then $\mathcal{R}$ and $\mathcal{IR}$ are equivalent, except if $k_p<0, 
k_{p+1} = \dots =k_{2p-1}=0$ and $k_{2p}=\frac{k_p^2}{4}$, in which case $\mathcal{IR}$ is not possible.
\end{teo}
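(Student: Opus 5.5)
We would follow the proofs of Theorems \ref{n=4} and \ref{n=5}. Since $n=2p\le 2p+1$, Theorem \ref{t39} says that $P(x)$ is $\mathcal{R}$ if and only if $k_p,\dots,k_{2p-1}\le 0$ and $k_{2p}\le \frac{k_p^2}{4}$; condition c) does not apply. So we must show that every such polynomial, apart from the stated exception, is realized by a strongly connected simple EBL digraph on $\{1,\dots,2p\}$. Each such digraph has an adjacency matrix with ones on the superdiagonal and weights only in the first column and last row, and we compute its characteristic polynomial with the Coefficient Theorem \ref{teocof}.

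The bookkeeping rests on two facts. First, two initial cycles always share vertex $1$, and two final cycles share vertex $2p$. Second, an initial $r$-cycle $C_{r,1}$ and a final $s$-cycle $C_{2p,2p+1-s}$ are disjoint if and only if $r+s\le 2p$; when $r+s\ge 2p+1$ they meet, and together they make the digraph strongly connected. Every cycle we use will have length at least $p$. Hence the only possible linear subdigraph with more than one cycle is a pair of $p$-cycles $C_{p,1}$, $C_{2p,p+1}$, which contributes only to $k_{2p}$. We split into three cases according to the sign of $k_{2p}$.

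\emph{Case $k_{2p}<0$.} Take the basic cycle $C_{2p,1}(-k_{2p})$ together with the initial cycles $C_{i,1}(-k_i)$ for $p\le i\le 2p-1$. All of these are initial cycles, so every linear subdigraph is a single cycle, and the coefficients come out exactly. The digraph is strongly connected by the basic cycle, as in case a) of Theorem \ref{n=4}.

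\emph{Case $k_{2p}=0$.} If at most one of $k_p,\dots,k_{2p-1}$ is nonzero, then $P(x)=x^{2p}+k_jx^{2p-j}$ and Theorem \ref{SFS} applies. Otherwise, let $j$ be the largest index with $k_j<0$. We use the final cycle $C_{2p,2p+1-j}(-k_j)$ and the initial cycles $C_{i,1}(-k_i)$ for every other $i$. Let $r$ be the smallest other index with $k_r<0$. If $k_p<0$ then $r=p$ and $r+j\ge p+(p+1)$. If $k_p=0$ then $r\ge p+1$ and $j\ge p+2$. In either case $r+j\ge 2p+1$, so $C_{r,1}$ and the final cycle meet and give strong connectivity. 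No pair of cycles is disjoint, because $j=p$ cannot occur. Hence $k_{2p}=0$ and the other coefficients are exact; this is the analogue of case b1) of Theorem \ref{n=4}.

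\emph{Case $k_{2p}>0$.} Then $k_p<0$. If some $k_j<0$ with $p<j<2p$, we use the two disjoint $p$-cycles $C_{p,1}(a)$ and $C_{2p,p+1}(b)$, where $a,b=-\frac{k_p}{2}\mp\sqrt{\frac{k_p^2}{4}-k_{2p}}\ge 0$. These satisfy $a+b=-k_p$ and $ab=k_{2p}$. We add $C_{i,1}(-k_i)$ for $p<i<2p$. Each such initial cycle meets the final $p$-cycle (since $i+p\ge 2p+1$), which gives strong connectivity, and the only disjoint pair is the pair of $p$-cycles, which produces $k_{2p}$. This mirrors case c1) of Theorem \ref{n=4}.

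If instead $k_{p+1}=\dots=k_{2p-1}=0$ and $k_{2p}<\frac{k_p^2}{4}$, we follow case c2) of Theorem \ref{n=4}. We use $C_{p,1}(-\frac{k_p}{2})$, $C_{2p,p+1}(-\frac{k_p}{2})$ and the basic cycle $C_{2p,1}(\frac{k_p^2}{4}-k_{2p})$. These give $k_{2p}=\frac{k_p^2}{4}-(\frac{k_p^2}{4}-k_{2p})$, and strong connectivity comes from the basic cycle.

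Finally, when $k_{p+1}=\dots=k_{2p-1}=0$ and $k_{2p}=\frac{k_p^2}{4}$, we have $P(x)=(x^p+\frac{k_p}{2})^2$. Its Perron root is double, so $P(x)$ is not $\mathcal{IR}$. The main obstacle is the case $k_{2p}=0$: there the basic cycle is unavailable, so strong connectivity must come from a pair of overlapping initial and final cycles, and we must check carefully that this choice creates no unwanted disjoint pair.
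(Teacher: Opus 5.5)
Your proposal is correct and follows the paper's proof essentially verbatim. It uses the same three-way split on the sign of $k_{2p}$ and the same simple EBL digraphs: the basic $2p$-cycle plus initial cycles when $k_{2p}<0$; an overlapping initial/final pair, or Theorem \ref{SFS} in the one-term case, when $k_{2p}=0$; and the two $p$-cycles with weights $-\frac{k_p}{2}\mp\sqrt{\frac{k_p^2}{4}-k_{2p}}$, or the basic cycle, when $k_{2p}>0$. The exception is handled by the same double-Perron-root argument, and your explicit observation that every cycle used has length at least $p$ (so the only possible disjoint pair is $C_{p,1},C_{2p,p+1}$) is a useful check that the paper leaves implicit.
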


\begin{proof} From Theorem \ref{t39}, $P(x)$ is $\mathcal{R}$ if and only if  $k_p\leq 0, \, k_{p+1}\leq 0, \dots , k_{2p-1}\leq 0$ and $k_{2p}\leq \frac{k_o^2}{4}$.

\noindent
a) If $k_{2p}<0$, then $P(x)$ is $\mathcal{IR}$ by the digraph formed by the basic $2p$-cycle {\color{red}$C_{2p,1}\left(-k_{2p}\right)$} and the $r$-cycles 
$C_{r,1}\left(-k_r\right), \, r\in \{p, p+1, \dots , 2p-1\}$. Then {\color{red}$a_{2p,1}=-k_{2p}>0$} and 
$a_{r,1}=-k_r\geq 0, \, r\in \{p, p+1, \dots , 2p-1\}$.

\noindent b1) If $k_{2p}=0$, and there exist indices $q_1$ and  $q_2$, with $p\leq  q_1 < q_2 \leq 2p-1$, such that $k_{q_1}, k_{q_2}<0$, 
then $P(x)$ is $\mathcal{IR}$ by the digraph formed with the two non-disjoint cycles {\color{red}$C_{q_1,1}\left(-k_{q_1}\right)$} and 
{\color{red}$C_{2p,2p+1-q_2}\left(-k_{q_2}\right)$}, and the $r$-cycles  $C_{r,1}\left(-k_r\right), \, r\in \{p, p+1, \dots , 2p-1\}-\{q_1, q_2\}$. 
Then ${\color{red}a_{q_1,1}=-k_{q_1}}, 
{\color{red}a_{2p,2p+1-q_2}=-k_{q_2}>0}$, and  $a_{r,1}=-k_r\geq 0, \, r\in \{p, p+1, \dots , 2p-1\}-\{q_1, q_2\}$.

\noindent b2) If $k_{2p}=0$, and there exists only an index $q$, with $p\leq  q  \leq 2p-1$, such that $k_{q}<0$, then the respective polynomials 
$x^{p}(x^p+k_p), x^{p-1}(x^{p+1}+k_{p+1}), \dots , x(x^{2p-1}+k_{2p-1})$, by Theorem \ref{SFS}, are $\mathcal{IR}$. 

\noindent c1) If $0<k_{2p}\leq \frac{k_3^2}{4}$, and there exists an index $q$, with $p+1\leq  q  \leq 2p-1$, such that $k_{q}<0$,
then $P(x)$ is $\mathcal{IR}$ by the digraph formed with the two non-disjoint cycles {\color{red}$C_{q,1}\left(-k_{q}\right)$} and 
{\color{red}$C_{2p,p+1}\left(-\frac{k_p}{2}+\sqrt{\frac{k_p^2}{4}-k_{2p}}\right)$}, and the cycles  $C_{p,1}\left(-\frac{k_p}{2}-\sqrt{\frac{k_p^2}{4}-k_{2p}}\right)$ and $C_{r,1}\left(-k_r\right), \, r\in \{p+1, \dots , 2p-1\}-\{q\}$.
Then {\color{red}$a_{q,1}=-k_{q}>0$}, {\color{red}$a_{2p,p+1}=-\frac{k_p}{2}+\sqrt{\frac{k_p^2}{4}-k_{2p}}>0$},
 $a_{p,1}=-\frac{k_p}{2}-\sqrt{\frac{k_p^2}{4}-k_{2p}}>0$ and $a_{r,1}=-k_r\geq 0$, $ r\in \{p+1, \dots , 2p-1\}-\{q\}$. 

\noindent c2)  If $0<k_{2p}<\frac{k_p^2}{4}$ and $k_{p+1}= \dots  =k_{2p-1}=0$, then 

$P(x)=\left(x^p-\left(-\frac{k_p}{2}-\sqrt{\frac{k_p^2}{4}-k_{2p}}\right)\right)\left(x^p-\left(-\frac{k_p}{2}+\sqrt{\frac{k_p^2}{4}-k_{2p}}\right)\right)$ 

\noindent is $\mathcal{IR}$ by the digraph formed by the basic $2p$-cycle {\color{red}$C_{2p,1}\left(\frac{k_p^2}{4}-k_{2p}\right)$}, and the $p$-cycles $C_{p,1}\left(-\frac{k_p}{2}\right)$ and $C_{2p,p+1}\left(-\frac{k_p}{2}\right)$. Then {\color{red}$a_{2p,1}=\frac{k_p^2}{4}-k_{2p}>0$},
$a_{p,1}=a_{2p,p+1}=-\frac{k_p}{2}>0$.

\noindent c3)  If $0<k_{2p}=\frac{k_3^2}{4}$ and $k_{p+1}= \dots  =k_{2p+1}=0$, then 
$P(x)=\left(x^p+\frac{k_p}{2}\right)^2$ does not have simple Perron root so, is not $\mathcal{IR}$. This is the exceptional case.\end{proof}

\begin{teo} {\rm ($n=2p+1$)} \label{n=2p+1}
Let $P(x)=x^{2p+1}+k_px^{p+1}+k_{p+1}x^{p}+ \dots +k_{2p}x+k_{2p+1} \neq x^{2p+1}$ with $p\geq 3$. Then $\mathcal{R}$ and $\mathcal{IR}$ are equivalent, except if 

\noindent 1) $k_p<0, k_{p+1}<0, k_{p+2}= \dots =k_{2p}=0$ and $k_{2p+1}=k_pk_{p+1}$; or

\noindent 2) $k_p<0, k_{p+1}<0, k_{p+2}= \dots =k_{2p-1}=0,  0<k_{2p}<\frac{k_p^2}{4}$ and $k_{2p+1}=k_{p+1}\Big(\frac{k_p}{2}-\sqrt{\frac{k_p^2}{4}-k_{2p}}\Big)$; or

\noindent 3) $k_p<0, k_{p+1}= \dots = k_{2p-1}=k_{2p+1}=0$ and $k_{2p}=\frac{k_p^2}{4}$;

\noindent  in which cases $\mathcal{IR}$ is not possible.
\end{teo}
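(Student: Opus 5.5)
Since $n=2p+1\le 2p+1$, Theorem \ref{t39} applies and says $P(x)$ is $\mathcal{R}$ iff $k_p,\dots,k_{2p-1}\le 0$, $k_{2p}\le k_p^2/4$, and $k_{2p+1}$ satisfies bound c). I will mirror the case structure of Theorem \ref{n=5}, which is exactly the case $p=2$. The cases are split by the sign of $k_{2p}$ and by whether $k_{2p+1}$ attains its maximum. In each $\mathcal{IR}$ case I will exhibit a strongly connected simple EBL digraph on $\{1,\dots,2p+1\}$ and read off its characteristic polynomial with the Coefficient Theorem \ref{teocof}. Initial cycles $C_{r,1}$ of length $r\ge p$ pairwise intersect, so the only disjoint pairs are an initial $r$-cycle with a final $s$-cycle where $r+s\le 2p+1$. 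Hence the only products that can appear are $k_{r+s}$ with $r,s\ge p$, namely $k_{2p}$ and $k_{2p+1}$, and this is what makes the bookkeeping manageable.

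\textbf{Case $k_{2p}<0$.} I use initial cycles $C_{r,1}(-k_r)$ for $p\le r\le 2p-1$. I add $C_{2p,1}(-k_{2p})$ together with a final $p$-cycle $C_{2p+1,p+2}$, or instead the basic cycle $C_{2p+1,1}$, to fix $k_{2p+1}$. If $k_p<0$, I put the $p$-cycle at the end: $C_{2p+1,p+2}(-k_p)$. It meets $C_{2p,1}$, and its disjoint product with $C_{p+1,1}$ contributes $k_pk_{p+1}$, so $a_{2p+1,1}=k_pk_{p+1}-k_{2p+1}\ge 0$. Strong connectivity then comes from the red pair $C_{2p,1}$, $C_{2p+1,p+2}$, exactly as in a1) of Theorem \ref{n=5}. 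If $k_p=0$, the bound forces $k_{2p+1}\le 0$, and I use the basic cycle when $k_{2p+1}<0$. When $k_{2p+1}=0$, I use a pair of non-disjoint cycles built from two negative coefficients among $k_{p+1},\dots,k_{2p}$, as in a2). If only one coefficient is negative, Theorem \ref{SFS} with Corollary \ref{addzerospol} handles it.

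\textbf{Case $k_{2p}=0$.} If $k_{2p+1}<k_pk_{p+1}$, the basic cycle with weight $k_pk_{p+1}-k_{2p+1}$ works (b1). If $k_{2p+1}=k_pk_{p+1}$ and some $k_q<0$ with $q\ge p+2$, I replace the basic cycle by a non-disjoint pair: $C_{q,1}(-k_q)$ with final $C_{2p+1,p+2}(-k_p)$. These overlap since $q\ge p+2$. I must check that no new disjoint products arise, which holds because $q+p>2p+1$. If $k_{p+2}=\dots=k_{2p}=0$ and $k_p,k_{p+1}<0$, then $P=(x^{p}+k_p)(x^{p+1}+k_{p+1})$, which is not SS and lacks Frobenius structure since $p\ne p+1$. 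Theorem \ref{Teo1} then gives exception 1). If one of $k_p,k_{p+1}$ is zero, the polynomial is a single-term case covered by Theorem \ref{SFS}, possibly after adding another negative $k_q$ as above.

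\textbf{Case $k_{2p}>0$.} Here $k_p<0$. I set $m_\pm=-\tfrac{k_p}{2}\pm\sqrt{k_p^2/4-k_{2p}}$ and use initial $C_{p,1}(m_-)$ and final $C_{2p+1,p+2}(m_+)$. These produce $k_p$ and $k_{2p}$, and the $p$-cycle of weight $m_+$ creates a product with $C_{p+1,1}(-k_{p+1})$. For strict inequality in c), the basic cycle closes the gap (c1). At equality with some $k_q<0$, $q\ge p+2$, the non-disjoint pair $C_{q,1}$, $C_{2p+1,p+2}$ gives connectivity. At equality with $k_{p+1}=0$ and all middle coefficients zero, I mimic c3)/c4): $C_{2p,1}(k_p^2/4-k_{2p})$ together with final $p$-cycles when $k_{2p}<k_p^2/4$. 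When $k_{2p}=k_p^2/4$ and $k_{p+1}<0$, I use two overlapping $(p+1)$-cycles of weight $-k_{p+1}/2$ as in c4).

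The residual cases give the two remaining exceptions. Exception 3) is $x(x^p+k_p/2)^2$, which has a non-simple Perron root. Exception 2) is the extremal configuration, treated by the optimality argument from the proof of Theorem 39 in \cite{TAAMP}, as in c2) of Theorem \ref{n=5}. This is the main obstacle. The argument must show that attaining the maximum $k_{2p+1}$ forces two disjoint $p$-cycles of the prescribed weights, no $(2p+1)$-cycle, and a $(p+1)$-cycle disjoint from the heavier one. That rigid structure is not strongly connected. The difficulty is that the extremality argument must be redone on arbitrary realizing digraphs, not only EBL ones, and carried out for general $p$.
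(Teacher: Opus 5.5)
Your plan is the paper's proof essentially case for case. It uses the same split by the sign of $k_{2p}$ and the same simple EBL digraphs: your pair $C_{2p,1}(-k_{2p})$, $C_{2p+1,p+2}(-k_p)$ with the basic cycle of weight $k_pk_{p+1}-k_{2p+1}$ is the paper's a1), and your $k_{2p}>0$ constructions are its c1), c2), c4), c5). Exceptions 1) and 3) are handled the same way, by Theorem~\ref{Teo1} and by the double Perron root. For exception 2) you appeal, as the paper does, to the extremal configuration of \cite{TAAMP}. Your weight $\frac{k_p^2}{4}-k_{2p}$ on $C_{2p,1}$ in the c4-type case is the correct one; the paper's $\frac{k_p^2}{2}-k_{2p}$ is a misprint.

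One branch needs repair: $k_{2p}=0$, $k_{2p+1}=k_pk_{p+1}$, and some $k_q<0$ with $q\ge p+2$. Your red cycle $C_{2p+1,p+2}(-k_p)$ disappears if $k_p=0$, and the polynomial is then not a single-term case, so ``as above'' fails. If $k_{p+1}<0$, use instead the final cycle $C_{2p+1,p+1}(-k_{p+1})$ together with $C_{r,1}(-k_r)$, $p+2\le r\le 2p-1$. It meets every initial cycle, so no disjoint pairs arise and $k_{2p}=k_{2p+1}=0$; the paper instead pairs $C_{p+1,1}$ with the final $q$-cycle $C_{2p+1,2p+2-q}(-k_q)$. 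If $k_p=k_{p+1}=0$ and two coefficients $k_{q_1},k_{q_2}<0$ with $p+2\le q_1<q_2\le 2p-1$, use $C_{q_1,1}(-k_{q_1})$ and $C_{2p+1,2p+2-q_2}(-k_{q_2})$. These overlap since $q_1+q_2>2p+1$. This subcase is missing from the paper's b3)--b4) as well.

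The real gap is exception 2), which you describe but do not prove. The paper does no better: its c3) only asserts that the extremal $k_{2p+1}$ forces the disconnected configuration. Here is a direct argument valid for any realizing digraph $G$ on $V$, $|V|=2p+1$, with $s=-k_p=m_++m_-$ and $k_{2p}=m_+m_-$.

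Since $k_1=\dots=k_{p-1}=0$, induction with Theorem~\ref{teocof} shows $G$ has no cycle of length $<p$, so linear subdigraphs have at most two cycles. Hence:
\begin{itemize}
\item the $p$-cycles have total weight $s$ and the $(p+1)$-cycles total weight $-k_{p+1}$;
\item there are no cycles of lengths $p+2,\dots,2p-1$;
\item $k_{2p}$ is the sum over disjoint pairs of $p$-cycles minus the total weight $\Sigma_{2p}$ of $2p$-cycles;
\item $k_{2p+1}$ is the sum over disjoint ($p$-cycle, $(p+1)$-cycle) pairs minus $\Sigma_{2p+1}$.
\end{itemize}
Let $W(S)$ be the total weight of $p$-cycles on the vertex set $S$, and let $M=W(S_0)$ be its maximum. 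A $(p+1)$-cycle is disjoint only from the $p$-cycles on its complement, so $k_{2p+1}\le -k_{p+1}M-\Sigma_{2p+1}$.

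Two $p$-subsets of $V\setminus S_0$ intersect. So a disjoint pair of $p$-cycles either has a member on $S_0$, or has a member on some $S\ne S_0$ meeting $S_0$. Let $N$ be the total weight of the latter cycles. Then
\[
k_{2p}\le M(s-M)-N(2M-s)-\Sigma_{2p}.
\]
This gives $M\le m_+$, which re-proves bound c) for arbitrary digraphs. Now suppose equality $k_{2p+1}=-k_{p+1}m_+$ holds, with $k_{p+1}<0$ and $m_+>m_-$. It forces $\Sigma_{2p+1}=0$ and $M=m_+>s/2$. So $S_0$ is the unique maximizer, and every $(p+1)$-cycle lies on $V\setminus S_0$. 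Substituting into the display then forces $N=\Sigma_{2p}=0$. Thus every cycle of $G$ lies inside $S_0$ or inside $V\setminus S_0$. In a strongly connected digraph any arc leaving $S_0$ would lie on a cycle, so $G$ is not strongly connected.
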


\begin{proof} From Theorem \ref{t39}, $P(x)$ is $\mathcal{R}$ if and only if  $k_p, \dots  k_{2p-1}\leq 0, \, k_{2p}\leq \frac{k_p^2}{4}$ and 

$k_{2p+1}\leq\left\{
\begin{array}{ll}
	k_pk_{p+1}& \;\; \mbox{if }\;\;k_{2p}\leq 0,\\
	\vspace*{-.3cm} & \\
	k_{p+1}\Big(\frac{k_p}{2}-\sqrt{\frac{k_p^2}{4}-k_{2p}}\Big)&\;\; \mbox{if }\;\; k_{2p}>0.\\
\end{array}\right.$

\noindent a1) If $k_{2p}< 0$, and $k_p<0$ or $k_{2p+1}<k_pk_{p+1}$, then $P(x)$ is $\mathcal{IR}$ by the digraph formed by the non-disjoint cycles 
{\color{red}$C_{2p,1}\left(-k_{2p}\right)$} and {\color{red}$C_{2p+1,p+2}\left(-k_p\right)$}, or 
the basic $(2p+1)$-cycle {\color{red}$C_{2p+1,1}\left(k_pk_{p+1}-k_{2p+1}\right)$}, 
and the $r$-cycles $C_{r,1}\left(-k_r\right), p+1\leq r \leq 2p-1$.
Then {\color{red}$a_{2p,1}=-k_{2p}>0$}, {\color{red}$a_{2p+1,p+2}=-k_p>0$} or
{\color{red}$a_{2p+1,1}=k_pk_{p+1}-k_{2p+1}> 0$}, $a_{r,1}=-k_r\geq 0, p+1\leq r \leq 2p-1$. 

\noindent a2) If $k_{2p}< 0$, $k_p=k_{2p+1}=0$ and there exists a coefficient $q\in \{p+1, \dots , 2p-1\}$ such that $k_q<0$, 
then $P(x)$ is $\mathcal{IR}$ by the digraph formed by the two non-disjoint cycles {\color{red}$C_{2p+1,2}\left(-k_{2p}\right)$} 
and {\color{red}$C_{q,1}\left(-k_q\right)$}, and the $r$-cycles $C_{r,1}\left(-k_r\right), \, r\in \{p+1, \dots , 2p-1\}-\{q\}$.
Then {\color{red}$a_{2p+1,2}=-k_{2p}>0$}, {\color{red}$a_{q,1}=-k_q>0$}, $a_{r,1}=-k_r\geq 0, \, r\in \{p+1, \dots , 2p-1\}-\{q\}$.

\noindent a3) If $k_{2p}< 0$ and $k_p= \dots =k_{2p-1}=k_{2p+1}=0$, then $P(x)=x(x^{2p}+k_{2p})$ so, by Theorem \ref{SFS}, is $\mathcal{IR}$.

\noindent b1) If $k_{2p}= 0$ and $k_{2p+1}<k_pk_{p+1}$, then $P(x)$ is $\mathcal{IR}$ by the digraph formed by the basic $(2p+1)$-cycle 
 {\color{red}$C_{2p+1,1}\left(k_pk_{p+1}-k_{2p+1}\right)$}, 
and the $r$-cycles $C_{r,1}\left(-k_r\right),  p+1\leq r \leq 2p-1$, and $C_{2p+1,p+2}\left(-k_p\right)$.
Then {\color{red}$a_{2p+1,1}=k_pk_{p+1}-k_{2p+1}>0$}, $a_{r,1}=-k_r\geq 0,  p+1\leq r \leq 2p-1, a_{2p+1,p+2}=-k_p\geq 0$.

\noindent b2) If $k_{2p}= 0$, $k_{2p+1}=k_pk_{p+1}$, $k_p<0$, $k_{p+1}<0$ and $k_{p+2}= \dots = k_{2p-1}=0$, then $P(x)=(x^p+k_p)(x^{p+1}+k_{p+1})$ does not have 
Frobenius structure, so is not $\mathcal{IR}$ by Theorem \ref{Teo1}. This is the exceptional case 1).

\noindent b3) If $k_{2p}= 0$, $k_{2p+1}=k_pk_{p+1}$, with $k_p<0$ or $k_{p+1}<0$, and there exists $q\in \{p+2,\dots , 2p-1\}$ such that $k_q<0$, then $P(x)$ is $\mathcal{IR}$ by the digraph formed by:

 the two non-disjoint cycles {\color{red}$C_{p,1}\left(-k_p\right)$} and {\color{red}$C_{2p+1,2p+2-q}\left(-k_{q}\right)$}, the cycle $C_{2p+1,p+1}\left(-k_{p+1}\right)$ and the $r$-cycles 
$C_{r,1}\left(-k_r\right), r\in \{p+2,\dots , 2p-1\}-\{q\}$, with {\color{red}$a_{p,1}=-k_p>0$}, {\color{red}$a_{2p+1,2p+2-q}=-k_{q}>0$}, $a_{2p+1,p+1}=-k_{p+1}\geq 0$,
$a_{r,1}=-k_r\geq 0, r\in \{p+2,\dots , 2p-1\}-\{q\}$, or 

the two non-disjoint cycles {\color{red}$C_{p+1,1}\left(-k_{p+1}\right)$} and {\color{red}$C_{2p+1,2p+2-q}\left(-k_{q}\right)$}, the cycle $C_{2p+1,p+2}\left(-k_{p}\right)$ and the $r$-cycles  $C_{r,1}\left(-k_r\right), r\in \{p+2,\dots , 2p-1\}-\{q\}$, with {\color{red}$a_{p+1,1}=-k_{p+1}>0$}, {\color{red}$a_{2p+1,2p+2-q}=-k_{q}>0$}, $a_{2p+1,p+2}=-k_{p}\geq 0$,  $a_{r,1}=-k_r\geq 0, r\in \{p+2,\dots , 2p-1\}-\{q\}$.

\noindent b4) If $k_{2p}= 0$, and only one of the coefficients $k_p, k_{p+1}, \dots , k_{2p-1}$ is negative, then the respective polynomials 
$x^{p+1}(x^p+k_p), x^p(x^{p+1}+k_{p+1}), \dots , x^2(x^{2p-1}+k_{2p-1})$ are $\mathcal{IR}$
by Theorem \ref{SFS}. 

\noindent c1)  If $0<k_{2p}\leq \frac{k_p^2}{4}$ and $k_{2p+1}<k_{p+1}\left(\frac{k_p}{2}-\sqrt{\frac{k_p^2}{4}-k_{2p}}\right)$, then $P(x)$ is $\mathcal{IR}$ 
by the digraph formed by the basic $(2p+1)$-cycle  {\color{red}$C_{2p+1,1}\left(k_{p+1}\left(\frac{k_p}{2}-\sqrt{\frac{k_p^2}{4}-k_{2p}}\right)-k_{2p+1}\right)$}, the $p$-cycles $C_{p,1}\left(-\frac{k_p}{2}-\sqrt{\frac{k_p^2}{4}-k_{2p}}\right)$ and  
$C_{2p+1,p+2}\left(-\frac{k_p}{2}+\sqrt{\frac{k_p^2}{4}-k_{2p}}\right)$, and the $r$-cycles $C_{r,1}\left(-k_r\right), p+1 \leq r \leq 2p-1$. Then {\color{red}$a_{2p+1,1} =k_{p+1}\left(\frac{k_p}{2}-\sqrt{\frac{k_p^2}{4}-k_{2p}}\right)-k_{2p+1}>0$}, 
$a_{p1} =-\frac{k_p}{2}-\sqrt{\frac{k_p^2}{4}-k_{2p}}> 0$, $a_{2p+1,p+2} =-\frac{k_p}{2}+\sqrt{\frac{k_p^2}{4}-k_{2p}}> 0$, $a_{r,1}=-k_r\geq 0, p+1 \leq r \leq 2p-1$.

\noindent c2)  If $0<k_{2p}\leq \frac{k_p^2}{4}$, there exists $q\in \{p+2, \dots , 2p-1\}$ with $k_q<0$, 
and $k_{2p+1}=k_{p+1}\left(\frac{k_p}{2}-\sqrt{\frac{k_p^2}{4}-k_{2p}}\right)$, then $P(x)$ is $\mathcal{IR}$ by the digraph formed by the two non-disjoint cycles {\color{red}$C_{q,1}(-k_q)$}
 and {\color{red}$C_{2p+1,p+2}\left(-\frac{k_p}{2}+\sqrt{\frac{k_p^2}{4}-k_{2p}}\right)$}, and the cycles $C_{p,1}\left(-\frac{k_p}{2}-\sqrt{\frac{k_p^2}{4}-k_{2p}}\right)$ and  $C_{r,1}(-k_r), r\in \{p+1,p+2, \dots , 2p-1\}-\{q\}$. Then {\color{red}$a_{q,1}=-k_q>0$}, {\color{red}$a_{2p+1,p+2}=-\frac{k_p}{2}+\sqrt{\frac{k_p^2}{4}-k_{2p}}>0$}, $a_{p,1}=-\frac{k_p}{2}-\sqrt{\frac{k_p^2}{4}-k_{2p}}>0$,  $a_{r,1}=-k_r\geq 0, r\in \{p+1,p+2, \dots , 2p-1\}-\{q\}$.

\noindent c3) Let $0<k_{2p}< \frac{k_p^2}{4}$, $k_{p+1}< 0$, $k_{p+2}= \dots =k_{2p-1}=0$ and $k_{2p+1}=k_{p+1}\left(\frac{k_p}{2}-\sqrt{\frac{k_p^2}{4}-k_{2p}}\right)$. 

\noindent Since $k_{2p}>0$, there exists $m_0\geq -\frac{k_p}{2}$ such that $k_{2p}=m_0(-k_p-m_0)$. Then $m_0=-\frac{k_p}{2}+\sqrt{\frac{k_p^2}{4}-k_{2p}}$.

\noindent This situation correponds to a digraph with two disjoint $p$-cycles with weights $m_0$ and $-k_p-m_0$ and without $2p$-cycles. 
And this is the optimum situation (see the proof of Theorem \ref{t39} in \cite{TAAMP}).

The value $k_{p+1}\left(\frac{k_p}{2}-\sqrt{\frac{k_p^2}{4}-k_{2p}}\right)$ is the maximum value of the coefficient $k_{2p+1}$ and so it is obtained with
the absence of $(2p+1)$-cycles and with a single $(p+1)$-cycle disjoint with the $p$-cycle of maximum weight $m_0$. 
This digraph with two disjoint $p$-cycles (by instance $C_{p,1}(-k_p-m_0)$ and $C_{2p+1,p+2}(m_0)$) and a $(p+1)$-cycle disjoint with $C_{2p+1,p+2}(m_0)$ 
(by instance $C_{p+1,1}(-k_{p+1})$) is not strongly connected and so there is no $\mathcal{IR}$.
This is the exceptional case 2).

\noindent c4) If $0<k_{2p}<\frac{k_p^2}{4}$ and $k_{p+1}= \dots =k_{2p-1}=k_{2p+1}=0$, then $P(x)$ is $\mathcal{IR}$ by the digraph formed by 
the two non-disjoint cycles {\color{red}$C_{2p,1}\left(\frac{k_p^2}{2}-k_{2p}\right)$} and {\color{red}$C_{2p+1,p+2}\left(-\frac{k_p}{2}\right)$}, 
and the cycle $C_{p,1}\left(-\frac{k_p}{2}\right)$. Then {\color{red}$a_{2p,1}=\frac{k_p^2}{2}-k_{2p}>0$}, {\color{red}$a_{2p+1,p+2}=-\frac{k_p}{2}>0$}, 
$a_{p,1}=-\frac{k_p}{2}>0$.

\noindent c5)  If $0<k_{2p}=\frac{k_p^2}{4}$, $k_{p+1}<0$, $k_{p+2}=\dots = k_{2p-1}=0$ and  $k_{2p+1}=\frac{k_pk_{p+1}}{2}$, then $P(x)$ is $\mathcal{IR}$ by the digraph formed by 
the two non-disjoint $(p+1)$-cycles {\color{red}$C_{p+1,1}\left(-\frac{k_{p+1}}{2}\right)$} and {\color{red}$C_{2p+1,p+1}\left(-\frac{k_{p+1}}{2}\right)$}, and the $p$-cycles $C_{p,1}\left(-\frac{k_p}{2}\right)$ and  $C_{2p+1,p+2}\left(-\frac{k_p}{2}\right)$.
Then {\color{red}$a_{p+1,1}=a_{2p+1,p+1}=-\frac{k_{p+1}}{2}>0$}, $a_{p,1}=a_{2p+1,p+2}=-\frac{k_p}{2}>0$.

\noindent c6)  If $0<k_{2p}=\frac{k_p^2}{4}$ and $k_{p+1}= \dots = k_{2p-1}=k_{2p+1}=0$, then $P(x)=x\left(x^p+\frac{k_p}{2}\right)^2$ does not have simple Perron root 
so, is not $\mathcal{IR}$. This is the exceptional case 3).\end{proof}

Since all the strongly connected realizing digraphs in the previous results are simple EBL digraphs, we have the following summary result.

\begin{co} \label{summary}
Let $P(x)=x^{n}+k_px^{n-p}+ \dots +k_{n-1}x+k_{n}$ with $2\leq p\leq n \leq 2p+1$. If  $P(x)$ is $\mathcal{IR}$, 
then it is $\mathcal{IR}$ by a simple EBL digraph.
\end{co}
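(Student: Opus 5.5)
The plan is a case-by-case collation of the classification results already proved. Fix $P(x)=x^{n}+k_px^{n-p}+\cdots+k_n$ with $2\le p\le n\le 2p+1$, assume $P(x)$ is $\mathcal{IR}$, and note that we may take $k_p\neq 0$. Indeed, if $k_p=0$ we re-index $p$ upward to the first nonzero coefficient; the constraint $n\le 2p+1$ survives this, and if all coefficients vanish then $P(x)=x^n$ is excluded or trivial. We then split according to the value of $n-p$, i.e. according to whether $n=p$, $n=p+1$, \dots, $n=2p+1$. In each sub-range we invoke the matching earlier statement (Theorem \ref{SFS}, Corollary \ref{n=2}, Theorems \ref{n=3}, \ref{n=4}, \ref{n=5}, \ref{n=2p}, \ref{n=2p+1}). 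Each of these lists all $\mathcal{IR}$ polynomials in its range and, in every non-exceptional case, its proof exhibits an explicit strongly connected simple EBL digraph with $w(i,i+1)=1$ whose remaining arcs close initial or final cycles.

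First we treat the ranges $n<2p$ that the listed theorems do not state explicitly. Here the realizability conditions of Theorem \ref{t39} reduce to $k_p,\dots,k_n\le 0$, since the conditions $b)$ and $c)$ are vacuous. We then realize $P$ by the simple EBL digraph made of the initial cycles $C_{r,1}(-k_r)$ for $p\le r\le n$. If $k_n<0$, the basic $n$-cycle gives strong connectivity. If $k_n=0$, we copy the device of Theorem \ref{n=2p}, b1/b2: with two negative coefficients $k_{q_1},k_{q_2}$ we use $C_{q_1,1}$ together with the final cycle $C_{n,n+1-q_2}$, which intersect because $q_1+q_2\ge 2p>n$. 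With a single negative coefficient, $P(x)=x^{n-q}(x^q+k_q)$, and Theorem \ref{SFS} together with the mechanism of Corollary \ref{addzerospol} applies. Since Corollary \ref{addzerospol} is proved via Theorem \ref{addx}, which does not preserve the EBL shape, we instead use the EBL construction inside Theorem \ref{SFS} directly, taking $n$ as the total size. The cycle weights never interact, because in this range any two initial or final cycles of lengths $\ge p$ covering more than $n$ vertices automatically intersect. So the Coefficient Theorem gives exactly the prescribed $k_i$.

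For $n=2p$ and $n=2p+1$ (including the small cases $p=2$, $n=4,5$ handled by Theorems \ref{n=4}, \ref{n=5}), every $\mathcal{IR}$ polynomial falls into a non-exceptional case of Theorem \ref{n=2p} or \ref{n=2p+1}. These theorems say that the exceptional cases are not $\mathcal{IR}$, so an $\mathcal{IR}$ polynomial avoids them. In each remaining case the cited proof explicitly constructs a simple EBL digraph, with red cycles giving strong connectivity, and this is what the statement requires.

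The main obstacle is exhaustiveness: checking that the case distinctions in Theorems \ref{n=2p} and \ref{n=2p+1} really cover every realizable non-exceptional polynomial, including boundary cases such as $k_{2p}=k_p^2/4$ with some intermediate $k_q<0$. One must also verify that each displayed digraph is genuinely EBL, meaning every cycle passes through vertex $1$ or vertex $n$ and uses consecutive vertices, and that no unintended disjoint-cycle products alter the coefficients. I would check this by running through the sign patterns of $(k_{2p},k_{2p+1})$ against the trichotomy $k_{2p}<0$, $k_{2p}=0$, $k_{2p}>0$ used in Theorem \ref{t39}.
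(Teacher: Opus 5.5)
Your proof is correct. Its backbone is the paper's own one-line argument: every realizing digraph exhibited in Theorem \ref{SFS}, Corollary \ref{n=2} and Theorems \ref{n=3}--\ref{n=5}, \ref{n=2p}, \ref{n=2p+1} is a simple EBL digraph, so it suffices to collate them. You differ in how you handle the range $p\le n<2p$. The paper never treats that range separately: such a polynomial is implicitly read as an instance of the fixed-$n$ result with the smallest admissible $p$ and vanishing leading coefficients. This is shorter and needs no new construction.

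You instead normalize to $k_p\neq 0$ and realize $n<2p$ directly. You use the initial cycles $C_{r,1}(-k_r)$, plus one final cycle $C_{n,n+1-q_2}(-k_{q_2})$ when $k_n=0$, or Theorem \ref{SFS} for a binomial. This is sound: two cycles of length at least $p$ must share a vertex when $n<2p$, so only single cycles enter the Coefficient Theorem and the union covers all vertices.

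Your extra step buys something concrete. With $k_p<0$, the case lists in the proofs of Theorems \ref{n=4}, \ref{n=5}, \ref{n=2p}, \ref{n=2p+1} are exhaustive, which settles the exhaustiveness check you left open. The paper's implicit reduction, by contrast, has a hole. Consider case b) of Theorem \ref{n=2p+1} with $k_p=k_{p+1}=k_{2p}=k_{2p+1}=0$ and two negative coefficients among $k_{p+2},\dots,k_{2p-1}$. This is possible once $p\ge 4$, e.g.\ $x^9+k_6x^3+k_7x^2$ with $k_6,k_7<0$. It fits none of b1)--b4), so no EBL realization is exhibited for it. In your normalization this polynomial has $p=6$ and $n=9<2p$. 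Then $C_{6,1}(-k_6)$ together with $C_{9,3}(-k_7)$ realizes it by a strongly connected simple EBL digraph.
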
 

In an Appendix, we give a very explicit description of our trace 0 realizations in case $n=6$ or 7, as examples for comparison and clarity.

\section*{Appendix}

\begin{teo} {\rm ($n=6$)} \label{n=6}
Let $P(x)=x^6+k_3x^3+k_4x^2+k_5x+k_6 \neq x^6$. Then $\mathcal{R}$ and $\mathcal{IR}$ are equivalent, except if $P(x)=x^6+k_3x^3+(\frac{k_3}{2})^2$ with $k_3<0$, in which case no $\mathcal{IR}$ is possible.
\end{teo}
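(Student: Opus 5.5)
This is the case $p=3$, $n=2p=6$ of Theorem \ref{n=2p}. I would give an explicit case analysis mirroring that proof, writing out each simple EBL digraph on vertices $1,\dots,6$ with weight $1$ on the path arcs $(i,i+1)$. By Theorem \ref{t39} with $p=3$, $n=6$, $P(x)$ is $\mathcal{R}$ if and only if $k_3,k_4,k_5\le 0$ and $k_6\le k_3^2/4$. So it suffices to show that every such $P(x)\neq x^6$ is realized by a strongly connected simple EBL digraph, except the stated one. Throughout, the coefficients will be checked with the Coefficient Theorem (Theorem \ref{teocof}). In these digraphs the only linear subdigraphs are single cycles or pairs of disjoint initial/final cycles, so the check amounts to listing those pairs.

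\textbf{Case a: $k_6<0$.} I would take the basic $6$-cycle $C_{6,1}(-k_6)$ together with $C_{r,1}(-k_r)$ for $r=3,4,5$. All initial cycles share vertex $1$, so no two are disjoint, and the coefficients are exactly $k_3,\dots,k_6$. Strong connectivity comes from $C_{6,1}$.

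\textbf{Case b: $k_6=0$.}
\begin{itemize}
\item If at least two of $k_3,k_4,k_5$ are negative, say $k_{q_1},k_{q_2}<0$ with $q_1<q_2$, I would use $C_{q_1,1}(-k_{q_1})$ and the final cycle $C_{6,7-q_2}(-k_{q_2})$. Since $q_1+q_2\ge 7$, these are non-disjoint, so they create no $6$-term and cover all vertices. The remaining coefficient goes on an initial cycle.
\item If exactly one $k_q$ is negative, then $P(x)=x^{6-q}(x^q+k_q)$, which is $\mathcal{IR}$ by Theorem \ref{SFS}.
\end{itemize}

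\textbf{Case c: $0<k_6\le k_3^2/4$.} Here $k_3<0$. Write $m_\pm=-\frac{k_3}{2}\pm\sqrt{\frac{k_3^2}{4}-k_6}$, so that $m_+ + m_-=-k_3$ and $m_+m_-=k_6$.
\begin{itemize}
\item If some $k_q<0$ with $q\in\{4,5\}$, I would use the two disjoint $3$-cycles $C_{3,1}(m_-)$ and $C_{6,4}(m_+)$ to produce $k_3$ and $k_6$. I would then add $C_{q,1}(-k_q)$, which meets $C_{6,4}$ since $q\ge 4$, giving strong connectivity, plus the other initial cycle. Any new disjoint pairs must be checked: $C_{4,1}$ or $C_{5,1}$ against $C_{6,4}$ overlap, so no extra terms appear.
\item If $k_4=k_5=0$ and $k_6<k_3^2/4$, I would use $C_{3,1}(-k_3/2)$, $C_{6,4}(-k_3/2)$ and $C_{6,1}(k_3^2/4-k_6)$. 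The $6$-coefficient is then $k_3^2/4-(k_3^2/4-k_6)=k_6$, and the basic cycle gives irreducibility.
\item If $k_4=k_5=0$ and $k_6=k_3^2/4$, then $P(x)=(x^3+k_3/2)^2$ has a double Perron root, so it is not $\mathcal{IR}$. This is the stated exception.
\end{itemize}

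The main obstacle is bookkeeping in case c, where disjoint pairs can silently contribute extra terms. Two things need care. First, $k_6>0$ forces the $6$-coefficient to come from a disjoint pair of $3$-cycles, since a single cycle contributes negatively. Second, the connecting cycle must meet the final $3$-cycle; otherwise a $6$-term of the wrong sign appears or connectivity fails. Checking vertex overlaps for each added cycle settles this.
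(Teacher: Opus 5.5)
Your proposal is correct and follows the paper's proof essentially step for step. It uses the same realizability conditions from Theorem~\ref{t39}, the same case split on the sign of $k_6$, and the same simple EBL digraphs in each case:
\begin{itemize}
\item for $k_6<0$, the basic $6$-cycle plus the initial cycles;
\item for $k_6=0$, two overlapping initial/final cycles, or Theorem~\ref{SFS} when only one $k_q$ is negative;
\item for $k_6>0$, the disjoint $3$-cycles $C_{3,1}(m_-)$ and $C_{6,4}(m_+)$ joined by $C_{4,1}$ or $C_{5,1}$, or else the basic $6$-cycle with weight $k_3^2/4-k_6$.
\end{itemize}
The exception is handled the same way, through the double Perron root of $(x^3+k_3/2)^2$.
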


\begin{proof} 
$P(x)$ is $\mathcal{R}$ if and only if  $k_3\leq 0, \, k_4\leq 0, \, k_5\leq 0$ and $k_6\leq \frac{k_3^2}{4}$.

\noindent
a) If $k_6<0$, then $P(x)$ is $\mathcal{IR}$ by the digraph formed by the basic 6-cycle {\color{red}$C_{6,1}\left(-k_6\right)$} and the cycles 
$C_{3,1}\left(-k_3\right), \, C_{4,1}\left(-k_4\right)$ and  $C_{5,1}\left(-k_5\right)$ with matrix  
$\left[\begin{array}{cccccc}
0 & {\color{red}1} & 0 & 0 & 0 & 0\\
0 & 0 & {\color{red}1} & 0 & 0 & 0\\
-k_3 & 0 & 0 & {\color{red}1} & 0 & 0\\
-k_4 & 0 & 0 &  0 & {\color{red}1} & 0\\
-k_5 & 0 & 0 & 0 & 0 & {\color{red}1}\\
{\color{red}-k_6} & 0 & 0 & 0 & 0 & 0
\end{array}\right]$.

\noindent b1) If $k_6=0$, and at least two of the coefficients $k_3, k_4, k_5$ are negative, then $P(x)$ is $\mathcal{IR}$. For instance, if $k_3,k_4<0$
we build the digraph with the two non-disjoint cycles {\color{red}$C_{3,1}\left(-k_3\right)$} and {\color{red}$C_{6,3}\left(-k_4\right)$}, and the cycle 
$C_{5,1}\left(-k_5\right)$. Analogously in the other cases. Thus, we obtain the matrices 

\noindent $\left[\begin{array}{cccccc}
0 & {\color{red}1} & 0 & 0 & 0 & 0\\
0 & 0 & {\color{red}1} & 0 & 0 & 0\\
{\color{red}-k_3} & 0 & 0 & {\color{red}1} & 0 & 0\\
0 & 0 & 0 &  0 & {\color{red}1} & 0\\
-k_5 & 0 & 0 & 0 & 0 & {\color{red}1}\\
0 & 0 & {\color{red}-k_4} & 0 & 0 & 0
\end{array}\right]$, \, 
$\left[\begin{array}{cccccc}
0 & {\color{red}1} & 0 & 0 & 0 & 0\\
0 & 0 & {\color{red}1} & 0 & 0 & 0\\
{\color{red}-k_3} & 0 & 0 & {\color{red}1} & 0 & 0\\
-k_4 & 0 & 0 &  0 & {\color{red}1} & 0\\
0 & 0 & 0 & 0 & 0 & {\color{red}1}\\
0 & {\color{red}-k_5} & 0 & 0 & 0 & 0
\end{array}\right]$, \,
$\left[\begin{array}{cccccc}
0 & {\color{red}1} & 0 & 0 & 0 & 0\\
0 & 0 & {\color{red}1} & 0 & 0 & 0\\
-k_3 & 0 & 0 & {\color{red}1} & 0 & 0\\
{\color{red}-k_4} & 0 & 0 &  0 & {\color{red}1} & 0\\
0 & 0 & 0 & 0 & 0 & {\color{red}1}\\
0 & {\color{red}-k_5} & 0 & 0 & 0 & 0
\end{array}\right]$.

\noindent b2) If $k_6=0$, and just one of the coefficients $k_3, k_4, k_5$ is negative, then the corresponding polynomials 
$x^3\left(x^3+k_3\right), x^2\left(x^4+k_4\right),x\left(x^5+k_5\right)$ are $\mathcal{IR}$ by Theorem \ref{SFS} by the respective matrices 

\noindent $\left[\begin{array}{cccccc}
0 & {\color{red}1} & 0 & 0 & 0 & 0\\
0 & 0 & {\color{red}1} & 0 & 0 & 0\\
-\frac{k_3}{2} & 0 & 0 & {\color{red}1} & 0 & 0\\
0 & 0 & 0 &  0 & {\color{red}1} & 0\\
0 & 0 & 0 & 0 & 0 & {\color{red}1}\\
{\color{red}\frac{k_3^2}{4}} & 0 & 0 & -\frac{k_3}{2} & 0 & 0
\end{array}\right]$, \, 
$\left[\begin{array}{cccccc}
0 & {\color{red}1} & 0 & 0 & 0 & 0\\
0 & 0 & {\color{red}1} & 0 & 0 & 0\\
0 & 0 & 0 & {\color{red}1} & 0 & 0\\
{\color{red}-\frac{k_4}{2}}  & 0 & 0 &  0 & {\color{red}1} & 0\\
0 & 0 & 0 & 0 & 0 & {\color{red}1}\\
0 & 0 & {\color{red}-\frac{k_4}{2}} & 0 & 0 & 0
\end{array}\right]$, \,
$\left[\begin{array}{cccccc}
0 & {\color{red}1} & 0 & 0 & 0 & 0\\
0 & 0 & {\color{red}1} & 0 & 0 & 0\\
0 & 0 & 0 & {\color{red}1} & 0 & 0\\
0 & 0 & 0 &  0 & {\color{red}1} & 0\\
{\color{red}-\frac{k_5}{2}}  & 0 & 0 & 0 & 0 & {\color{red}1}\\
0 & {\color{red}-\frac{k_5}{2}}  & 0 & 0 & 0 & 0
\end{array}\right]$.

\noindent c1) If $0<k_6\leq \frac{k_3^2}{4}$, and at least one of the coefficients $k_4, k_5$ is negative, then $P(x)$ is $\mathcal{IR}$. For instance, if $k_4<0$
we build the digraph with two non-disjoint cycles {\color{red}$C_{4,1}\left(-k_4\right)$} and {\color{red}$C_{6,4}\left(a_{64}\right)$}, and the cycles 
$C_{3,1}\left(a_{31}\right)$ and $C_{5,1}\left(-k_5\right)$. Analogously in the other case. Thus, we obtain the matrices 
\noindent $\left[\begin{array}{cccccc}
0 & {\color{red}1} & 0 & 0 & 0 & 0\\
0 & 0 & {\color{red}1} & 0 & 0 & 0\\
a_{31} & 0 & 0 & {\color{red}1} & 0 & 0\\
{\color{red}-k_4} & 0 & 0 &  0 & {\color{red}1} & 0\\
-k_5 & 0 & 0 & 0 & 0 & {\color{red}1}\\
0 & 0 & 0 & {\color{red}a_{64}} & 0 & 0
\end{array}\right]$, \, 
$\left[\begin{array}{cccccc}
0 & {\color{red}1} & 0 & 0 & 0 & 0\\
0 & 0 & {\color{red}1} & 0 & 0 & 0\\
a_{31} & 0 & 0 & {\color{red}1} & 0 & 0\\
-k_4 & 0 & 0 &  0 & {\color{red}1} & 0\\
{\color{red}-k_5} & 0 & 0 & 0 & 0 & {\color{red}1}\\
0 & 0 & 0 & {\color{red}a_{64}} & 0 & 0
\end{array}\right]$ \hspace{0.1cm}{where}\hspace{0.1cm}
${\left\{
\begin{array}{rl}
a_{31} & =-\frac{k_3}{2}-\sqrt{\frac{k_3^2}{4}-k_6},  \\
{\color{red}a_{64}} & ={\color{red}-\frac{k_3}{2}+\sqrt{\frac{k_3^2}{4}-k_6}}.
\end{array}
\right.}$

\noindent c2)  If $0<k_6<\frac{k_3^2}{4}$ and $k_4=k_5=0$, then 
$P(x)=\left(x^3-\left(-\frac{k_3}{2}-\sqrt{\frac{k_3^2}{4}-k_6}\right)\right)\left(x^3-\left(-\frac{k_3}{2}+\sqrt{\frac{k_3^2}{4}-k_6}\right)\right)$ 
 is $\mathcal{IR}$ by the digraph formed by the basic $6$-cycle
{\color{red}$C_{6,1}\left(\frac{k_3^2}{4}-k_6\right)$}, and the $3$-cycles $C_{3,1}\left(-\frac{k_3}{2}\right)$ and $C_{6,4}\left(-\frac{k_3}{2}\right)$, 
with matrix
$\left[\begin{array}{cccccc}
0 & {\color{red}1} & 0 & 0 & 0 & 0\\
0 & 0 & {\color{red}1} & 0 & 0 & 0\\
-\frac{k_3}{2} & 0 & 0 & {\color{red}1} & 0 & 0\\
0 & 0 & 0 &  0 & {\color{red}1} & 0\\
0 & 0 & 0 & 0 & 0 & {\color{red}1}\\
{\color{red}\frac{k_3^2}{4}-k_6} & 0 & 0 & -\frac{k_3}{2} & 0 & 0
\end{array}\right]$.

\noindent c3)  If $0<k_6=\frac{k_3^2}{4}$ and $k_4=k_5=0$, then 
$P(x)=\left(x^3+\frac{k_3}{2}\right)^2$ does not have simple Perron root so is not $\mathcal{IR}$. This is the exceptional case.\end{proof}

\begin{teo} {\rm ($n=7$)} \label{n=7}
Let $P(x)=x^7+k_3x^4+k_4x^3+k_5x^2+k_6x+k_7 \neq x^7$. Then $\mathcal{R}$ and $\mathcal{IR}$ are equivalent, except if 

\noindent 1) $k_3<0, k_4<0, k_5=k_6=0$ and $k_7=k_3k_4$; or

\noindent 2) $k_3<0, k_4<0, k_5=0, 0<k_6<\frac{k_3^2}{4}$ and $k_7=k_{4}\Big(\frac{k_3}{2}-\sqrt{\frac{k_3^2}{4}-k_{6}}\Big)$; or

\noindent 3) $k_3<0, k_4=k_5=0, k_6=\frac{k_3^2}{4}$ and $k_7=0$;

\noindent in which cases no $\mathcal{IR}$ representation is possible.

\end{teo}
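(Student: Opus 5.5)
This is the case $p=3$, $n=2p+1=7$ of Theorem \ref{n=2p+1}, and I will follow that proof case by case, writing out the explicit simple EBL digraphs on $\{1,\dots,7\}$. The arcs $(i,i+1)$ have weight $1$. Nonzero entries appear only in the first column and the last row, and the coefficients are checked with the Coefficient Theorem \ref{teocof}.

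By Theorem \ref{t39} with $p=3$, $P(x)$ is $\mathcal{R}$ if and only if $k_3,k_4,k_5\le 0$, $k_6\le k_3^2/4$, and
\[
k_7\le k_3k_4 \ \text{ if } k_6\le 0, \qquad k_7\le k_4\Big(\frac{k_3}{2}-\sqrt{\frac{k_3^2}{4}-k_6}\Big) \ \text{ if } k_6>0 .
\]
I split according to the sign of $k_6$.

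\emph{Case $k_6<0$.}
\begin{itemize}
\item If $k_3<0$ or $k_7<k_3k_4$, use the non-disjoint red cycles $C_{6,1}(-k_6)$ and $C_{7,5}(-k_3)$, or the basic cycle $C_{7,1}(k_3k_4-k_7)$. Add $C_{4,1}(-k_4)$ and $C_{5,1}(-k_5)$. The only disjoint pair is $C_{4,1}$ with $C_{7,5}$, which produces exactly the correction $k_3k_4$ in $k_7$.
\item If $k_3=k_7=0$ and some $k_q<0$ with $q\in\{4,5\}$, use $C_{7,2}(-k_6)$ together with $C_{q,1}(-k_q)$. These intersect, so no unwanted products arise.
\item Otherwise $P(x)=x(x^6+k_6)$, which is covered by Theorem \ref{SFS}.
\end{itemize}

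\emph{Case $k_6=0$.}
\begin{itemize}
\item If $k_7<k_3k_4$, take the basic $7$-cycle $C_{7,1}(k_3k_4-k_7)$ with $C_{4,1}(-k_4)$, $C_{5,1}(-k_5)$ and $C_{7,5}(-k_3)$.
\item If $k_7=k_3k_4$ with $k_3,k_4<0$ and $k_5=0$, then $P(x)=(x^3+k_3)(x^4+k_4)$. This is not SS: its Perron root is $\max\{(-k_3)^{1/3},(-k_4)^{1/4}\}$. Equal moduli would force roots of different orders on the same circle, so $P$ lacks Frobenius structure. Theorem \ref{Teo1} then excludes $\mathcal{IR}$; this is exceptional case 1). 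The one subtlety is the case where the two Perron roots coincide, where non-SS is immediate.
\item If $k_5<0$ and $k_3$ or $k_4$ is negative, use case b3) of Theorem \ref{n=2p+1} with $q=5$. For example, take $C_{3,1}(-k_3)$ and $C_{7,3}(-k_5)$, which overlap at vertex $3$, together with $C_{7,4}(-k_4)$.
\item If exactly one coefficient is negative, apply Theorem \ref{SFS}.
\end{itemize}

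\emph{Case $k_6>0$.} Write $s=\sqrt{k_3^2/4-k_6}$.
\begin{itemize}
\item If $k_7$ is strictly below the bound, use $C_{7,1}\big(k_4(\frac{k_3}{2}-s)-k_7\big)$ with $C_{3,1}(-\frac{k_3}{2}-s)$, $C_{7,5}(-\frac{k_3}{2}+s)$, $C_{4,1}(-k_4)$ and $C_{5,1}(-k_5)$.
\item If $k_7$ equals the bound and $k_5<0$, use case c2): $C_{5,1}(-k_5)$ and $C_{7,5}$ overlap.
\item If $k_7$ equals the bound, $k_5=0$ and $k_4<0$, there are two subcases. When $k_6<k_3^2/4$ this is exceptional case 2). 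I argue as in case c3) of Theorem \ref{n=2p+1}, using the extremality analysis in the proof of Theorem \ref{t39}: attaining the maximal $k_7$ forces two disjoint $3$-cycles, no $6$- or $7$-cycles, and a $4$-cycle disjoint from the heavier $3$-cycle, so the digraph cannot be strongly connected. When $k_6=k_3^2/4$, case c5) applies, with red $C_{4,1}(-\frac{k_4}{2})$ and $C_{7,4}(-\frac{k_4}{2})$ plus $C_{3,1}(-\frac{k_3}{2})$ and $C_{7,5}(-\frac{k_3}{2})$.
\item If $k_4=k_5=k_7=0$ and $k_6<k_3^2/4$, use red $C_{6,1}$ and $C_{7,5}(-\frac{k_3}{2})$ with $C_{3,1}(-\frac{k_3}{2})$. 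The weight of $C_{6,1}$ must be chosen so that $k_6$ comes out right: the disjoint pair $C_{3,1},C_{7,5}$ contributes $+k_3^2/4$, so the weight is $\frac{k_3^2}{4}-k_6$. This repairs the misprint in c4).
\item If $k_6=k_3^2/4$ and the rest vanish, $P(x)=x(x^3+\frac{k_3}{2})^2$ has a double Perron root. This is exceptional case 3).
\end{itemize}

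\textbf{Main obstacle.} The hard step is exceptional case 2), because non-realizability of irreducibility there is not spectral. It needs a careful argument that every realization attaining the extremal $k_7$ has the forced cycle structure and is therefore reducible. I would first try to show that any strongly connected realization can be perturbed to decrease $k_7$ strictly while keeping the other coefficients fixed, which would contradict maximality.
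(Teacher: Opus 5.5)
Your case split and realizing EBL digraphs are those of the paper's proof (a1--a3, b1--b4, c1--c6), and they check out with Theorem~\ref{teocof}. You also correctly repair two misprints: the weight of $C_{6,1}$ in c4 must be $\frac{k_3^2}{4}-k_6$, and the radicand in c1 is $\frac{k_3^2}{4}-k_6$. In a2, keep both $C_{4,1}(-k_4)$ and $C_{5,1}(-k_5)$ when both are negative.

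The genuine gap is exceptional case 2). There you, like the paper, appeal to ``the extremality analysis'' of Theorem~\ref{t39} without carrying it out. As you state it, the conclusion does not follow. A digraph with two disjoint $3$-cycles and a $4$-cycle disjoint from the heavier one can still be strongly connected. What must be shown is that \emph{no} cycle meets both the heavy $3$-cycle and its complement.

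Your proposed repair is also backwards. The bound is an \emph{upper} bound on $k_7$, so perturbing a strongly connected realization to \emph{decrease} $k_7$ contradicts nothing. Raising $k_7$ while freezing $k_3,\dots,k_6$ is not easy to arrange either.

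Your remark that the obstruction is not spectral is correct and is the crux. For instance, $(x^3-1)(x^4-\frac{1}{2}x-1)$ lies in case 2). It is SS, because its Perron root is that of the primitive factor $x^4-\frac{1}{2}x-1$, which exceeds $1$. So Theorem~\ref{Teo1} is of no use here. This example is also at odds with the paper's Conjecture.

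What is needed is the equality case of the bound, which is short here. Let $A\ge 0$ realize $P$. Then $k_1=0$ rules out loops and $k_2=0$ rules out $2$-cycles, so a $3$-set carries at most one $3$-cycle. Let $W_j$ be the total weight of $j$-cycles, and let $D_{33}$, $D_{34}$ be the sums of $w(C)w(C')$ over disjoint pairs of $3$-cycles, resp.\ ($3$-cycle, $4$-cycle). Theorem~\ref{teocof} gives
\[
k_3=-W_3,\quad k_4=-W_4,\quad k_5=-W_5,\quad k_6=D_{33}-W_6,\quad k_7=D_{34}-W_7 .
\]
Put $m_{0,1}=-\frac{k_3}{2}\pm\sqrt{\frac{k_3^2}{4}-k_6}$. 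Then $m_0>m_1>0$, $W_3=m_0+m_1$, $k_6=m_0m_1$, and the bound equals $W_4m_0$.

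Let $C^*$ be a heaviest $3$-cycle, of weight $M$. Let $a$ (resp.\ $b$) be the total weight of the other $3$-cycles disjoint from (resp.\ meeting) $C^*$. Two $3$-cycles inside the $4$-set $U=V\setminus V(C^*)$ intersect. Counting disjoint pairs of types $C^*$--$a$, $a$--$b$, $b$--$b$ gives $D_{33}\le Ma+ab+\frac{b^2}{2}\le Ma+b(W_3-M)$. If $M\ge m_0$, then $W_3-M\le m_1<M$, hence
\[
m_0m_1\le D_{33}\le M(W_3-M)\le m_0m_1 .
\]
The middle inequality is strict if $b>0$, and the last one is strict if $M>m_0$. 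Therefore $M\le m_0$, and $M=m_0$ forces $b=0$ and $W_6=0$.

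Since the complement of a $4$-cycle carries at most one $3$-cycle, $k_7\le W_4M-W_7\le W_4m_0$. Equality forces $W_7=0$, and it forces every $4$-cycle (one exists, as $W_4>0$) to have a $3$-cycle of weight $m_0$ on its complement. That $3$-cycle is $C^*$, since two of weight $m_0$ would give $W_3\ge 2m_0>W_3$. Consequently:
\begin{itemize}
\item all $4$-cycles live on $U$;
\item all other $3$-cycles live on $U$, since $b=0$;
\item $W_5=W_6=W_7=0$.
\end{itemize}
So no cycle meets both $V(C^*)$ and $U$. But a strongly connected digraph has an arc from $V(C^*)$ to $U$, and every arc of a strongly connected digraph lies on a cycle. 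Hence every realization is reducible. The strictness $m_1<m_0$ is exactly what fails in c5, where $\mathcal{IR}$ holds.
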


\begin{proof} 
$P(x)$ is $\mathcal{R}$ if and only if  $k_3\leq 0, \, k_4\leq 0, \, k_5\leq 0, \, k_6\leq \frac{k_3^2}{4}$ and 
$k_{7}\leq\left\{
\begin{array}{ll}
	k_3k_{4}& \;\; \mbox{if }\;\;k_{6}\leq 0,\\
	\vspace*{-.3cm} & \\
	k_{4}\Big(\frac{k_3}{2}-\sqrt{\frac{k_3^2}{4}-k_{6}}\Big)&\;\; \mbox{if }\;\; k_{6}>0.\\
\end{array}\right.$

\noindent a1) If $k_6<0$, and $k_3<0$ or $k_7<k_3k_4$, then $P(x)$ is $\mathcal{IR}$ by the digraph formed by the non-disjoint cycles 
{\color{red}$C_{6,1}\left(-k_6\right)$}, and {\color{red}$C_{7,5}\left(-k_3\right)$} or 
the basic $7$-cycle {\color{red}$C_{7,1}\left(k_3k_4-k_7\right)$}, 
and the cycles $C_{4,1}\left(-k_4\right)$ and $C_{5,1}\left(-k_5\right)$,  with matrix 
$\left[\begin{array}{ccccccc}
0 & {\color{red}1} & 0 & 0 & 0 & 0 & 0\\
0 & 0 & {\color{red}1} & 0 & 0 & 0 & 0\\
0 & 0 & 0 & {\color{red}1} & 0 & 0 & 0\\
-k_4 & 0 & 0 & 0 & {\color{red}1} & 0 & 0\\
-k_5 & 0 & 0 & 0 & 0 & {\color{red}1} & 0\\
{\color{red}-k_6} & 0 & 0 & 0 & 0 & 0 & {\color{red}1}\\
{\color{red}k_3k_4-k_7} & 0 & 0 & 0 & {\color{red}-k_3} & 0 & 0\\
\end{array}\right]$.

\noindent a2) If $k_6<0$,  $k_3=k_7=0$, and at least one of the coefficients $k_4, k_5$ is negative, then $P(x)$ is $\mathcal{IR}$ by the digraph formed by  
the non-disjoint cycles {\color{red}$C_{7,2}\left(-k_6\right)$}, and {\color{red}$C_{4,1}\left(-k_4\right)$} or {\color{red}$C_{5,1}\left(-k_5\right)$}, with matrices 
$\left[\begin{array}{ccccccc}
0 & {\color{red}1} & 0 & 0 & 0 & 0 & 0\\
0 & 0 & {\color{red}1} & 0 & 0 & 0 & 0\\
0 & 0 & 0 & {\color{red}1} & 0 & 0 & 0\\
{\color{red}-k_4} & 0 & 0 & 0 & {\color{red}1} & 0 & 0\\
-k_5 & 0 & 0 & 0 & 0 & {\color{red}1} & 0\\
0 & 0 & 0 & 0 & 0 & 0 & {\color{red}1}\\
0 & {\color{red}-k_6} & 0 & 0 & 0 & 0 & 0\\
\end{array}\right]$ or 
$\left[\begin{array}{ccccccc}
0 & {\color{red}1} & 0 & 0 & 0 & 0 & 0\\
0 & 0 & {\color{red}1} & 0 & 0 & 0 & 0\\
0 & 0 & 0 & {\color{red}1} & 0 & 0 & 0\\
-k_4 & 0 & 0 & 0 & {\color{red}1} & 0 & 0\\
{\color{red}-k_5} & 0 & 0 & 0 & 0 & {\color{red}1} & 0\\
0 & 0 & 0 & 0 & 0 & 0 & {\color{red}1}\\
0 & {\color{red}-k_6} & 0 & 0 & 0 & 0 & 0\\
\end{array}\right]$.

\noindent a3) If $k_6<0$, and $k_3=k_4=k_5=k_7=0$, then $P(x)=x(x^6+k_6)$, by Theorem \ref{SFS}, is $\mathcal{IR}$
by the matrix
$\left[\begin{array}{ccccccc}
0 & {\color{red}1} & 0 & 0 & 0 & 0 & 0\\
0 & 0 & {\color{red}1} & 0 & 0 & 0 & 0\\
0 & 0 & 0 & {\color{red}1} & 0 & 0 & 0\\
0 & 0 & 0 & 0 & {\color{red}1} & 0 & 0\\
0 & 0 & 0 & 0 & 0 & {\color{red}1} & 0\\
{\color{red}-\frac{k_6}{2}} & 0 & 0 & 0 & 0 & 0 & {\color{red}1}\\
0 & {\color{red}-\frac{k_6}{2}} & 0 & 0 & 0 & 0 & 0\\
\end{array}\right]$.

\noindent b1) If $k_6=0$ and $k_7<k_3k_4$, then $P(x)$ is $\mathcal{IR}$ by the digraph formed by the basic $7$-cycle {\color{red}$C_{7,1}\left(k_3k_4-k_7\right)$}, 
and the cycles $C_{4,1}\left(-k_4\right)$, $C_{5,1}\left(-k_5\right)$ and $C_{7,5}\left(-k_3\right)$,  with matrix  

$\left[\begin{array}{ccccccc}
0 & {\color{red}1} & 0 & 0 & 0 & 0 & 0\\
0 & 0 & {\color{red}1} & 0 & 0 & 0 & 0\\
0 & 0 & 0 & {\color{red}1} & 0 & 0 & 0\\
-k_4 & 0 & 0 & 0 & {\color{red}1} & 0 & 0\\
-k_5 & 0 & 0 & 0 & 0 & {\color{red}1} & 0\\
0 & 0 & 0 & 0 & 0 & 0 & {\color{red}1}\\
{\color{red}k_3k_4-k_7} & 0 & 0 & 0 & -k_3 & 0 & 0\\
\end{array}\right]$.

\noindent b2) If $k_6=0$, $k_7=k_3k_4$, $k_3,k_4<0$ and $k_5=0$, then $P(x)=(x^3+k_3)(x^4+k_4)$ does not have Frobenius structure so, by Theorem \ref{Teo1}, is not $\mathcal{IR}$. This is the exceptional case 1).

\noindent b3) If $k_6=0$, $k_7=k_3k_4$, $k_3<0$ or $k_4<0$, and $k_5<0$, then $P(x)$ is $\mathcal{IR}$ by the digraph formed by:
the two non-disjoint cycles {\color{red}$C_{3,1}\left(-k_3\right)$} and {\color{red}$C_{7,3}\left(-k_5\right)$}, and the $4$-cycle $C_{7,4}\left(-k_4\right)$; or 
the two non-disjoint cycles {\color{red}$C_{4,1}\left(-k_4\right)$} and {\color{red}$C_{7,3}\left(-k_5\right)$}, and the $3$-cycle $C_{7,5}\left(-k_3\right)$, with respective matrices
$\left[\begin{array}{ccccccc}
0 & {\color{red}1} & 0 & 0 & 0 & 0 & 0\\
0 & 0 & {\color{red}1} & 0 & 0 & 0 & 0\\
{\color{red}-k_3} & 0 & 0 & {\color{red}1} & 0 & 0 & 0\\
0& 0 & 0 & 0 & {\color{red}1} & 0 & 0\\
0 & 0 & 0 & 0 & 0 & {\color{red}1} & 0\\
0 & 0 & 0 & 0 & 0 & 0 & {\color{red}1}\\
0& 0 & {\color{red}-k_5} & -k_4  & 0 & 0 & 0\\
\end{array}\right]$ \; or \;
$\left[\begin{array}{ccccccc}
0 & {\color{red}1} & 0 & 0 & 0 & 0 & 0\\
0 & 0 & {\color{red}1} & 0 & 0 & 0 & 0\\
0 & 0 & 0 & {\color{red}1} & 0 & 0 & 0\\
{\color{red}-k_4} & 0 & 0 & 0 & {\color{red}1} & 0 & 0\\
0 & 0 & 0 & 0 & 0 & {\color{red}1} & 0\\
0 & 0 & 0 & 0 & 0 & 0 & {\color{red}1}\\
0 & 0 & {\color{red}-k_5}  & 0 & -k_3 & 0 & 0\\
\end{array}\right]$.

\noindent b4) If $k_6=0$, and only one of the coefficients $k_3, k_4, k_5 $ is negative, then the respective polynomials are
$x^4(x^3+k_3), x^3(x^4+k_4), x^2(x^5+k_5)$ and by Theorem \ref{SFS} they are $\mathcal{IR}$ by the respective matrices

\small
\noindent 
$\left[\begin{array}{ccccccc}
0 & {\color{red}1} & 0 & 0 & 0 & 0 & 0\\
0 & 0 & {\color{red}1} & 0 & 0 & 0 & 0\\
-\frac{k_3}{2} & 0 & 0 & {\color{red}1} & 0 & 0 & 0\\
0 & 0 & 0 & 0 & {\color{red}1} & 0 & 0\\
0 & 0 & 0 & 0 & 0 & {\color{red}1} & 0\\
{\color{red}\frac{k_3^2}{4}} & 0 & 0 & 0 & 0 & 0 & {\color{red}1}\\
0 & 0 & 0 & 0 & {\color{red}-\frac{k_3}{2}} & 0 & 0\\
\end{array}\right]$,  
$\left[\begin{array}{ccccccc}
0 & {\color{red}1} & 0 & 0 & 0 & 0 & 0\\
0 & 0 & {\color{red}1} & 0 & 0 & 0 & 0\\
0 & 0 & 0 & {\color{red}1} & 0 & 0 & 0\\
{\color{red}-\frac{k_4}{2}} & 0 & 0 & 0 & {\color{red}1} & 0 & 0\\
0 & 0 & 0 & 0 & 0 & {\color{red}1} & 0\\
0 & 0 & 0 & 0 & 0 & 0 & {\color{red}1}\\
0 & 0 & 0 & {\color{red}-\frac{k_4}{2}} & 0 & 0 & 0\\
\end{array}\right]$,  
$\left[\begin{array}{ccccccc}
0 & {\color{red}1} & 0 & 0 & 0 & 0 & 0\\
0 & 0 & {\color{red}1} & 0 & 0 & 0 & 0\\
0 & 0 & 0 & {\color{red}1} & 0 & 0 & 0\\
0 & 0 & 0 & 0 & {\color{red}1} & 0 & 0\\
{\color{red}-\frac{k_5}{2}} & 0 & 0 & 0 & 0 & {\color{red}1} & 0\\
0 & 0 & 0 & 0 & 0 & 0 & {\color{red}1}\\
0 & 0 & {\color{red}-\frac{k_5}{2}}  & 0 & 0 & 0 & 0\\
\end{array}\right]$.

\normalsize
\noindent c1)  If $0<k_6\leq \frac{k_3^2}{4}$ and $k_7<k_4\left(\frac{k_3}{2}-\sqrt{\frac{k_3^2}{4}-k_6}\right)$, then $P(x)$ is $\mathcal{IR}$ by the digraph formed by
the basic $7$-cycle  {\color{red}$C_{7,1}\left(a_{71}\right)$}, the $3$-cycles $C_{3,1}\left(a_{31}\right)$ and $C_{7,5}\left(a_{75}\right)$, and the cycles  $C_{4,1}\left(-k_4\right)$, and $C_{5,1}\left(-k_5\right)$, with matrix
 
$\left[\begin{array}{ccccccc}
0 & {\color{red}1} & 0 & 0 & 0 & 0 & 0\\
0 & 0 & {\color{red}1} & 0 & 0 & 0 & 0\\
a_{31} & 0 & 0 & {\color{red}1} & 0 & 0 & 0\\
-k_4 & 0 & 0 & 0 & {\color{red}1} & 0 & 0\\
-k_5 & 0 & 0 & 0 & 0 & {\color{red}1} & 0\\
0 & 0 & 0 & 0 & 0 & 0 & {\color{red}1}\\
{\color{red}a_{71}} & 0 & 0  & 0 & a_{75} & 0 & 0\\
\end{array}\right]$ \hspace{1em}{where}\hspace{1em}
${\left\{
\begin{array}{rl}
a_{31} & =-\frac{k_3}{2}-\sqrt{\frac{k_3^2}{4}-k_4},  \\
a_{75} & =-\frac{k_3}{2}+\sqrt{\frac{k_3^2}{4}-k_4},\\
{\color{red}a_{71}} & ={\color{red} k_4\left(\frac{k_3}{2}-\sqrt{\frac{k_3^2}{4}-k_6}\right)-k_7}>0 .\\
\end{array}
\right.}$

\noindent c2) If $0<k_{6}\leq \frac{k_3^2}{4}$, $k_{5}<0$ and $k_{7}=k_{4}\left(\frac{k_3}{2}-\sqrt{\frac{k_3^2}{4}-k_{6}}\right)$, then
$P(x)$ is $\mathcal{IR}$ by the digraph formed by the two non-disjoint cycles 
{\color{red}$C_{5,1}\left(-k_5\right)$} and {\color{red}$C_{7,5}\left(-\frac{k_3}{2}+\sqrt{\frac{k_3^2}{4}-k_6}\right)$}, 
and the cycles $C_{3,1}\left(-\frac{k_3}{2}-\sqrt{\frac{k_3^2}{4}-k_6}\right)$ and $C_{4,1}\left(-k_4\right)$ with matrix

$\left[\begin{array}{ccccccc}
0 & {\color{red}1} & 0 & 0 & 0 & 0 & 0\\
0 & 0 & {\color{red}1} & 0 & 0 & 0 & 0\\
-\frac{k_3}{2}-\sqrt{\frac{k_3^2}{4}-k_6}& 0 & 0 & {\color{red}1} & 0 & 0 & 0\\
-k_4 & 0 & 0 & 0 & {\color{red}1} & 0 & 0\\
{\color{red}-k_5} & 0 & 0 & 0 & 0 & {\color{red}1} & 0\\
0 & 0 & 0 & 0 & 0 & 0 & {\color{red}1}\\
0 & 0 &0  & 0 & {\color{red}-\frac{k_3}{2}+\sqrt{\frac{k_3^2}{4}-k_6}} & 0 & 0\\
\end{array}\right]$.

\noindent c3) Let $0<k_{6}< \frac{k_3^2}{4}$, $k_{4}< 0$, $k_{5}=0$ and $k_{7}=k_{4}\left(\frac{k_3}{2}-\sqrt{\frac{k_3^2}{4}-k_{6}}\right)$.

\noindent Since $k_{6}>0$, there exists $m_0\geq -\frac{k_3}{2}$ such that $k_{6}=m_0(-k_3-m_0)$. Then $m_0=-\frac{k_3}{2}+\sqrt{\frac{k_3^2}{4}-k_{6}}$.

\noindent This situation corresponds to a digraph with two disjoint $3$-cycles with weights $m_0$ and $-k_3-m_0$ and without $6$-cycles. 
And this is the optimum situation (see the proof of Theorem \ref{t39} in \cite{TAAMP}).

The value $k_{4}\left(\frac{k_3}{2}-\sqrt{\frac{k_3^2}{4}-k_{6}}\right)$ is the maximum value of the coefficient $k_{7}$ and so it is obtained with
the absence of $7$-cycles and with a single $4$-cycle (if it exists) disjoint with the $3$-cycle of maximum weight $m_0$. 
This digraph with two disjoint $3$-cycles (by instance $C_{31}(-k_3-m_0)$ and $C_{7,5}(-m_0)$) and a $4$-cycle disjoint with $C_{7,5}(-m_0)$ 
(by instance $C_{4,1}(-k_4)$) is not strongly connected and so there is no $\mathcal{IR}$.
This is the exceptional case 2).

\noindent c4)  If $0<k_6<\frac{k_3^2}{4}$, and $k_4=k_7=0$, then $P(x)$ is $\mathcal{IR}$ by the digraph formed by 
the two non-disjoint cycles {\color{red}$C_{6,1}\left(\frac{k_3^2}{2}-k_6\right)$} and {\color{red}$C_{7,5}\left(-\frac{k_3}{2}\right)$}, 
and the cycles $C_{3,1}\left(-\frac{k_3}{2}\right)$ and $C_{5,1}\left(-k_5\right)$ with matrix 
$\left[\begin{array}{ccccccc}
0 & {\color{red}1} & 0 & 0 & 0 & 0 & 0\\
0 & 0 & {\color{red}1} & 0 & 0 & 0 & 0\\
-\frac{k_3}{2} & 0 & 0 & {\color{red}1} & 0 & 0 & 0\\
0 & 0 & 0 & 0 & {\color{red}1} & 0 & 0\\
-k_5 & 0 & 0 & 0 & 0 & {\color{red}1} & 0\\
{\color{red}\frac{k_3^2}{2}-k_6} & 0 & 0 & 0 & 0 & 0 & {\color{red}1}\\
0 & 0 & 0  & 0 & {\color{red}-\frac{k_3}{2}}  & 0 & 0\\
\end{array}\right].$

\noindent c5) If $0<k_6= \frac{k_3^2}{4}$, $k_4<0$, $k_5=0$ and $k_7=\frac{k_3k_4}{2}$, then $P(x)$ is $\mathcal{IR}$ 
by the digraph formed by the two non-disjoint 4-cycles {\color{red}$C_{4,1}\left(-\frac{k_4}{2}\right)$} and {\color{red}$C_{7,4}\left(-\frac{k_4}{2}\right)$}, and the $3$-cycles $C_{3,1}\left(-\frac{k_3}{2}\right)$ and $C_{7,5}\left(-\frac{k_3}{2}\right)$ with matrix
$\left[\begin{array}{ccccccc}
0 & {\color{red}1} & 0 & 0 & 0 & 0 & 0\\
0 & 0 & {\color{red}1} & 0 & 0 & 0 & 0\\
-\frac{k_3}{2} & 0 & 0 & {\color{red}1} & 0 & 0 & 0\\
{\color{red}-\frac{k_4}{2}} & 0 & 0 & 0 & {\color{red}1} & 0 & 0\\
0 & 0 & 0 & 0 & 0 & {\color{red}1} & 0\\
0 & 0 & 0 & 0 & 0 & 0 & {\color{red}1}\\
0 & 0 &0  & {\color{red}-\frac{k_4}{2}} &  -\frac{k_3}{2} & 0 & 0\\
\end{array}\right]$.

\noindent c6)  If $0<k_6=\frac{k_3^2}{4}$ and $k_4=k_5=k_7=0$, then $P(x)=x\left(x^3+\frac{k_3}{2}\right)^2$ does not have simple Perron root so
 is not $\mathcal{IR}$. This is the exceptional case 3).\end{proof}


\begin{thebibliography}{99}


\bibitem{Brauer} 
A. Brauer, Limits for the characteristic roots of a matrix, IV: Applications to stochastic matrices,
\textit{Duke Math. J.} 19 (1952) 75-91.

\bibitem{CDS}
D. M. Cvetkovi\'c, M. Doob, H. Sachs,
Spectra of Graphs, Johann Ambrosius Barth, Heidelberg 1995.

\bibitem{Fi} 
M. Fiedler, Eigenvalues of nonnegative symmetric matrices,
\textit{Linear Algebra Appl.} 9 (1974) 119-142.

\bibitem{F}
G. Frobenius,
$\ddot{\rm U}$ber matrizen aus nicht negativen elementen,
\emph{S. -B. K. Preuss. Akad. Wiss. Berlin}, 1912 p. 456-477.

\bibitem{G}
F. R. Gantmacher, 
Th\`eorie des matrices, Tome 2, Collection Universitaire de Math\`ematiques, Dunod, Paris, 1966. 

\bibitem{HJ1}
R. Horn, C. R. Johnson,
Matrix Analysis, Cambridge University Press, 2013.


\bibitem{J} 
C. R. Johnson, Row stochastic matrices similar to doubly
stochastic matrices, \textit{Linear Multilinear Algebra} 10 (1981) 113-130.

\bibitem{JMPP}
C. R. Johnson, C. Mariju\'an, P. Papparella, M. Pisonero, The NIEP. Operator theory, operator algebras, and matrix theory, 199-220, Oper. Theory Adv. Appl. 267, Birkh$\ddot{\rm a}$user/Springer (2018), 353-372. ISBN: 978-3-319-72448-5. 

\bibitem{JP} 
C. R. Johnson, P. Paparella, Row cones, perron similarities, and nonnegative spectra, \textit{Linear Multilinear Algebra} 65 (2017) 2124-2130.


\bibitem{JMPS} A. I. Julio, C. Mariju\'an, M. Pisonero, R. L. Soto, On universal realizability of spectra, \textit{Linear Algebra Appl.} 563 (2019) 353-372.

\bibitem{JMPS1} A. I. Julio, C. Mariju\'an, M. Pisonero, R. L. Soto, Universal realizability in low dimension, \textit{Linear Algebra Appl.} 619 (2021) 107-136.

\bibitem{LM2} 
T. J. Laffey, E.\ Meehan, A characterization of trace zero nonnegative $5\times 5$ matrices, 
\textit{Linear Algebra Appl.} 302/303 (1999) 295-302.

\bibitem{LoLo} 
R. Loewy, D. London, A note on the inverse problem for
nonnegative matrices, \textit{Linear and Multilinear Algebra} 6 (1978) 83-90.

\bibitem{Mi1}
 H. Minc,
 Inverse elememtary divisor problem for nonnegative matrices
\emph{Proceedings of the American Mathematical Society}, 83 n4(1981) 665-669.


\bibitem{Pa} 
P. Paparella,  Realizing Suleimanova-type spectra via permutative matrices, \textit{Electron. J. Linear Algebra} 31 (2016) 306-312.

\bibitem{Pe1}
H. Perfect,
On positive stochastic matrices with real characteristic roots,
\emph{Proceedings of the Cambridge Philosophical Society},  48 (1952) 271-276.

\bibitem{Pe3} 
H. Perfect, Methods of constructing certain stochastic
matrices II, \textit{Duke Math.J.} 22 (1955) 305-311.

\bibitem{PeMi} 
H. Perfect, L. Mirsky, Spectral properties of doubly
stochastic matrices, \textit{Monatsh. Math.} 69 (1965) 35-57.


\bibitem{Sp}
O. Spector, A characterization of trace zero symmetric nonnegative 5$\times$5 matrices,
\textit{Linear Algebra Appl.} 434 (2011), n 4, 1000-1017.

\bibitem{Su} 
H. R. Sule\v{\i}manova, Stochastic matrices with real characteristic values,
\textit{ Dokl. Akad. Nauk. S.S.S.R.} 66 (1949)  343-345 (in Russian).

\bibitem{TAAMP}
J. Torre-Mayo, M. R. Abril-Raymundo, E. Alarcia-Est\'evez, C. Mariju\'an, M. Pisonero, The nonnegative inverse eigenvalue problem from the coefficients of the characteristic polynomial. EBL digraphs, {\it Linear Algebra Appl.} 426 (2007) 729-773.


\end{thebibliography}
\end{document}